\documentclass[reqno]{amsart}
\usepackage{amsmath,amsfonts,amsthm,amsopn,color,amssymb,enumitem}
\usepackage{palatino}
\usepackage{graphicx}
\usepackage[english]{babel}
\usepackage{caption}
\usepackage{subcaption}
\usepackage[colorlinks=true]{hyperref}
\hypersetup{urlcolor=blue, citecolor=red, linkcolor=blue}
\usepackage[utf8]{inputenc}

\usepackage{esint}
\usepackage[title]{appendix}

\newtheorem{theorem}{Theorem}[section]
\newtheorem{lemma}[theorem]{Lemma}

\newtheorem{proposition}[theorem]{Proposition}
\newtheorem{remark}[theorem]{Remark}

\def\beq{\begin{equation}}
\def\eeq{\end{equation}}
\def\ba{\begin{array}}
\def\ea{\end{array}}
\def\S{\mathbb S}
\def\R{\mathbb R}

\def\Sk{\mathbb{S}^{d}(k)}
\def\e{\varepsilon}

\def\dis{\displaystyle}


\newcommand{\rmnote}[1]{}


\numberwithin{equation}{section}
\newenvironment{abs}{\textbf{Abstract}\mbox{  }}{ }
\newenvironment{key words}{\textbf{Keywords}\mbox{  }}{ }

\pagestyle{plain}

 \begin{document}

\title{Overdetermined elliptic problems in nontrivial contractible domains of the sphere}

\author{David Ruiz}
\address{(D.~Ruiz)
	IMAG, Departamento de An\'alisis matem\'atico, Universidad de Granada,
	Campus Fuente-nueva, 18071 Granada, Spain} \email{daruiz@ugr.es}

\author{Pieralberto Sicbaldi}
\address{(P.~Sicbaldi)
	Departamento de An\'alisis matem\'atico,
	Universidad de Granada,
	Campus Fuentenueva,
	18071 Granada,
	Spain \& Aix Marseille Universit\'e - CNRS, Centrale Marseille - I2M, Marseille, France}
\email{pieralberto@ugr.es}

\author{Jing Wu}
\address{(J.~Wu)
	Departamento de An\'alisis matem\'atico, Universidad de Granada,
	Campus Fuente-nueva, 18071 Granada, Spain} \email{jingwulx@correo.ugr.es}

\thanks{D. R. has been supported by the FEDER-MINECO Grant PGC2018-096422-B-I00 and by J. Andalucia (FQM-116).
 P. S has been supported by the FEDER-MINECO Grant PID2020-117868GB-I00 and by J. Andalucia Grant P18-FR-4049. J. W. has been supported
by the China Scholarship Council (CSC201906290013) and by J. Andalucia (FQM-116).
D. R. and P. S. also acknowledge financial support from the Spanish Ministry of Science and Innovation (MICINN), through the \emph{IMAG-Maria de Maeztu} Excellence Grant CEX2020-001105-M/AEI/10.13039/501100011033.}
\maketitle

\noindent
\begin{abs}
In this paper, we prove the existence of nontrivial contractible domains $\Omega\subset\mathbb{S}^{d}$, $d\geq2$, such that the overdetermined elliptic problem
\begin{equation*}
  \begin{cases}
  -\varepsilon\Delta_{g} u +u-u^{p}=0 &\mbox{in $\Omega$, }\\
   u>0 &\mbox{in $\Omega$, }\\
  u=0 &\mbox{on $\partial\Omega$, }\\
  \partial_{\nu} u=\mbox{constant} &\mbox{on $\partial\Omega$, }
  \end{cases}
\end{equation*}
admits a positive solution. Here $\Delta_{g}$ is the Laplace-Beltrami operator in the unit sphere $\mathbb{S}^{d}$ with respect to the canonical round metric $g$, $\varepsilon>0$ is a small real parameter and $1<p<\frac{d+2}{d-2}$ ($p>1$ if $d=2$). These domains are perturbations of $\mathbb{S}^{d}\setminus D,$ where $D$ is a small geodesic ball. This shows in particular that Serrin's theorem for overdetermined problems in the Euclidean space cannot be generalized to the sphere even for contractible domains.
\end{abs}\\
\begin{key words}: Overdetermined boundary conditions; semilinear elliptic problems; bifurcation theory.
\end{key words}\\
\textbf{Mathematics Subject Classification (2020).}
35J61, 35N25
 \indent



\section{Introduction}
\label{Section 1}
Semilinear overdetermined elliptic problems in the form
\begin{equation}\label{eq03}
\left\{\begin{array} {ll}
\Delta u + f(u) = 0 & \mbox{in }\; \Omega,\\
 u> 0 & \mbox{in }\; \Omega,\\
               u= 0 & \mbox{on }\; \partial \Omega, \\
 \partial_{\nu} u=\mbox{constant} &\mbox{on }\; \partial
\Omega,\,
\end{array}\right.
\end{equation}
have received much attention in the last decades. Typically $\Omega$ is a regular domain in $\R^d$, $f$ is a Lipschitz function and $\partial_{\nu} u$ is the derivative of $u$ in the direction of the outward normal unit vector $\nu$ on the boundary $\partial \Omega$. These problems are called ``overdetermined" because of the two boundary conditions, and appear quite naturally in many different phenomena in Physics, see \cite{S02,S56} for more details.

\medskip

If $\Omega \subset \R^d $ is bounded the problem has been completely solved by J. Serrin, who proved in 1971 that if \eqref{eq03} is solvable, then $\Omega$ must be a ball (see the original work \cite{S71} and also \cite{PS07} for more details). Serrin's proof is based on the {\it moving plane method}, introduced in 1956 by A. D. Alexandrov in \cite{A56} to prove that the only compact, connected, embedded hypersurfaces in $\R^d$ with constant mean curvature are the spheres. This proof showed an analogy between overdetermined elliptic problems and constant mean curvature surfaces. From that moment, the moving plane method has become a very important tool in Analysis to obtain symmetry results for solutions of semilinear elliptic equations.

\medskip

Starting from Serrin's result, two natural lines of research have been considered: first, the case of unbounded domains $\Omega$ in the Euclidean space, and second, the case of domains $\Omega \subset M$, where $M$ is a Riemannian manifold.

\medskip

The first topic is motivated from the fact that overdetermined elliptic problems arise naturally in free boundary problems, when the variational structure imposes suitable conditions on the separation interface (see, for example, \cite{AC81}). As is well known, several methods for studying the regularity of the interface are based on blowup techniques that lead to the study of an overdetermined elliptic problem in an unbounded domain. In this framework, H. Berestycki, L. Caffarelli and L. Nirenberg ~\cite{BCN97} stated the following conjecture:
\medskip

\textbf{BCN Conjecture (1997)}. Assume that $\Omega$ is a smooth domain with connected complement, then the existence of a bounded solution to problem (\ref{eq03}) for some Lipschitz function $f$ implies that $\Omega$ is either a ball, a half-space, a generalized cylinder $B^{k}\times\mathbb{R}^{d-k}$ ($B^{k}$ is a ball in $\mathbb{R}^{k}$), or the complement of one of them.

\medskip

Many works have shown that under some assumptions on the function $f$ or on the domain $\Omega$ the conjecture is true, see \cite{FV10, HLSWW, HHP11, R97, RRS17, RS13, T13}.
Nevertheless, the conjecture is false in its generality and was disproved for $d\geq3$ in \cite{S10}, where the second author found a periodic perturbation of the straight cylinder $B^{d-1}\times\mathbb{R}$ that supports a periodic solution to the problem (\ref{eq03}) with $f(u)=\lambda u, \lambda>0$. After such first construction, other examples of nontrivial solutions have been obtained, see for instance \cite{DPW15, FMW17, LWW, RSW21, SS12}. In all these examples the boundary of the domain has a shape that looks like an unbounded constant mean curvature surface, showing again an important analogy with those surfaces. Another type of counterexample to the conjecture, of particular interest for this paper, has been given in \cite{RRS20}. In that work it is shown that \eqref{eq03} admits a solution for some nonradial exterior domains (i.e. the complement of a compact region in $\mathbb{R}^d$ that is not a closed ball), for a suitable function $f(u)$. In dimension 2, this represents this first construction of a counterexample to the BCN conjecture, that turns to be false in any dimension. It is worth pointing out that such result breaks the analogy with the theory of constant mean curvature surfaces.
\medskip


Overdetermined elliptic problems (\ref{eq03}) posed in complete Riemannian manifolds instead of the Euclidean setting have also been a natural field of research. In this framework, we need to replace in \eqref{eq03} the classical Laplacian by the Laplace-Beltrami operator $\Delta_g$ associated to the metric $g$ of the manifold $M$:
\begin{gather}\label{eq03bis}
\begin{cases}
  \Delta_g u +f(u)=0 &\mbox{in $\Omega$, } \\
  u >0 &\mbox{in $\Omega$, } \\
  u=0 &\mbox{on $\partial \Omega$, }\\
  \partial_{\nu} u=\mbox{constant} &\mbox{on $\partial \Omega$, }
  \end{cases}
\end{gather}
where $\Omega$ is a domain of $M$, and $\nu$ is the normal outward unit vector about $\partial \Omega$ with respect to $g$. For general Riemannian manifolds, solutions of overdetermined elliptic problems of the form \eqref{eq03bis} are obtained in \cite{DS15, DEP19, FMV13, MS16, PS09, S14}.

\medskip

It is clear that any symmetry result on the solutions of \eqref{eq03bis} tightly depends on the symmetry of the ambient manifold. In a given arbitrary manifold, geodesic balls are not domains where \eqref{eq03bis} can be solved. As shown in \cite{DS15}, for small volumes it is possible to construct solutions of \eqref{eq03bis} with $f(u) = \lambda\, u$ in perturbations of geodesic balls centered at specific points of the manifold, but such domains in general are not geodesic balls. In fact, one can expect to obtain a Serrin-type result only for manifolds that are symmetric in a suitable sense. More precisely (see also the introduction of \cite{DEP19}), the key ingredient for the moving plane method is the use of the reflexion principle in any point and any direction. For this we need that for any $p\in M$ and any two vectors $v,w \in T_pM$ there exists an isometry of $M$ leaving $p$ fixed and transporting $v$ into $w$ (i.e. $M$ is isotropic) and that such isometry is induced by the reflection with respect to a hypersurface. Such last hypersurface must be totally geodesic, being the set of fixed points of an isometry. But an isotropic manifold admitting totally geodesic hypersurfaces must have constant sectional curvature (see for example \cite{BCO03}, p. 295) and the only isotropic Riemannian manifolds of constant sectional curvature are the Euclidean space $\mathbb{R}^d$, the round sphere $\mathbb{S}^d$, the hyperbolic space $\mathbb{H}^d$ and the real projective space $\mathbb{RP}^d$ (see \cite{S91}). Now, domains in $\mathbb{RP}^d$ arise naturally to domains in its universal covering $\mathbb{S}^d$, so we are left to consider our problem in $\mathbb{R}^d$, $\mathbb{S}^d$ and $\mathbb{H}^d$.

\medskip

Being the problem in $\mathbb{R}^d$ completely understood by Serrin, the framework of the other two space form manifolds has been treated in 1998 in the paper by S. Kumaresan and J. Prajapat \cite{KP98}. In the case of $\mathbb{H}^d$ they obtained a complete counterpart of the Serrin's theorem: namely, by using the moving plane method, they show that if  $\Omega$ is a bounded domain of $\mathbb{H}^d$ and \eqref{eq03bis} admits a solution, then $\Omega$ must be a geodesic ball. The case of $\mathbb{S}^d$ is different. In fact, even if the reflexion principle is valid in any point, one needs to have a totally geodesic hypersurface that does not intersect the domain in order to start the moving plane. This is not a problem in $\mathbb{R}^d$ or $\mathbb{H}^d$, but it is in $\mathbb{S}^d$. Since the totally geodesic hypersurfaces are the equators, one can start the moving plane method if and only if the domain is contained on a hemisphere. And this is exactly the case considered in \cite{KP98}: if $\Omega$ is contained in a hemisphere and  \eqref{eq03bis} admits a solution, then $\Omega$ must be a geodesic ball.

\medskip

Other natural domains of $\mathbb{S}^d$ where \eqref{eq03bis} has solutions are symmetric neighborhoods of any equator. Such symmetric annuli are not contractible and their existence comes from the geometry of $\mathbb{S}^d$ in the same way as they exist in a cylinder or in a torus. Moreover, perturbations of such domains in $\mathbb{S}^d$ where \eqref{eq03bis} still admits a solution have been built in \cite{FMW18} in the same way as this has been done for the same kind of domains in cylinders or in tori \cite{S10}.

\medskip

Taking these facts in account, the following question arises naturally: is it true that if $\Omega \subset \mathbb{S}^d$ is contractible and \eqref{eq03bis} can be solved, then $\Omega$ must be a geodesic ball? In \cite{EM19}, J.M. Espinar and L. Mazet give an affirmative answer to this question if $d=2$ but under some extra assumptions on the nonlinear term $f(u)$. The proof of such result shows again an analogy between overdetermined elliptic problems and constant mean curvature surfaces, because it is highly inspired by the proof of the Hopf's Theorem that states that the only immersed constant mean curvature surfaces of genus zero in $\mathbb{R}^3$ are the spheres.
\medskip

In this paper we show that the answer to the previous question is negative: there exist contractible domains $\Omega \subset \mathbb{S}^d$, different from geodesic balls, where  \eqref{eq03bis} can be solved for some nonlinearities $f$. This construction works for any dimension $d \geq 2$. In view of \cite{KP98}, such domains cannot be contained in any hemisphere. Our main result can be stated as follows.

 \begin{theorem} \label{Th01}
Let $d\in\mathbb{N},d\geq2$ and $1<p<\frac{d+2}{d-2}$\,($p>1$ if $d=2).$ Then there exist domains $D$, which are perturbations of a small geodesic ball, such that the problem
 \begin{gather}\label{eq02}
  \begin{cases}
  -\varepsilon\Delta_{g} u +u-u^{p} =0&\emph{in $\mathbb{S}^{d}\setminus D$ }, \\
   u >0&\emph{in $\mathbb{S}^{d}\setminus D$ }, \\
  u=0 &\emph{on $\partial D$ },\\
  \partial_{\nu} u=\emph{constant} &\emph{on $\partial D$ },
  \end{cases}
\end{gather}
admits a solution for some $\varepsilon>0$.
 \end{theorem}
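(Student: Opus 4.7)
The strategy is a Crandall--Rabinowitz bifurcation argument starting from the explicit one-parameter family of geodesic balls. Fix a point $p_{0}\in\mathbb{S}^{d}$ with antipode $p^{*}$, and for $r\in(0,\pi)$ let $D_{r}$ be the geodesic ball of radius $r$ centred at $p_{0}$. Crucially, $\mathbb{S}^{d}\setminus\overline{D_{r}}$ coincides with the geodesic ball of radius $\pi-r$ centred at $p^{*}$ and is therefore $O(d)$-invariant with respect to the axis $\{p_{0},p^{*}\}$. For $\varepsilon>0$ small and $1<p<\tfrac{d+2}{d-2}$, the Dirichlet problem
\[
-\varepsilon\Delta_{g}u+u-u^{p}=0\ \text{ in }\mathbb{S}^{d}\setminus\overline{D_{r}},\qquad u|_{\partial D_{r}}=0,\qquad u>0,
\]
admits a positive solution $u_{r,\varepsilon}$ by standard variational methods, and such a solution is radial around $p^{*}$ (by a moving-plane/Alexandrov reflection within the ball, or by uniqueness of the least-energy solution combined with symmetry). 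Rotational symmetry then forces $\partial_{\nu}u_{r,\varepsilon}$ to be constant on $\partial D_{r}$, yielding the trivial branch $\{(0,r,\varepsilon)\}_{r}$.

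Next I would set up the shape-derivative map. Parametrise nearby domains by normal graphs $\partial D_{r}^{\phi}=\{\exp_{x}(\phi(x)\nu(x)):x\in\partial D_{r}\}$ for small functions $\phi\in C^{2,\alpha}(\partial D_{r})$, with $\partial D_{r}\simeq\mathbb{S}^{d-1}$. Let $u_{\phi}$ be the positive Dirichlet solution on $\mathbb{S}^{d}\setminus\overline{D_{r}^{\phi}}$ and define
\[
F(\phi,r,\varepsilon)\;=\;\left.\partial_{\nu}u_{\phi}\right|_{\partial D_{r}^{\phi}}\circ\Psi_{\phi}\;-\;\frac{1}{|\partial D_{r}^{\phi}|}\int_{\partial D_{r}^{\phi}}\partial_{\nu}u_{\phi},
\]
where $\Psi_{\phi}$ is the obvious diffeomorphism pulling back to $\partial D_{r}$. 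Then $F$ is a smooth map into a space of mean-zero $C^{1,\alpha}$ functions on $\partial D_{r}$, and solving the overdetermined problem is equivalent to finding nontrivial zeros of $F$. The linearisation $L_{r,\varepsilon}:=D_{\phi}F(0,r,\varepsilon)$ commutes with the residual $O(d)$-action fixing $\{p_{0},p^{*}\}$, so by Schur's lemma it is diagonal in the basis of spherical harmonics on $\mathbb{S}^{d-1}$ and acts as a scalar multiplier $\mu_{k}(r,\varepsilon)$ on the $k$-th eigenspace. The mean-zero quotient kills $k=0$; translations of the whole configuration give $\mu_{1}\equiv 0$, which is handled by fixing the centre of mass (or equivalently quotienting out the $k=1$ mode).

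The heart of the proof is then to locate, for some $\varepsilon>0$ small, a radius $r_{*}=r_{*}(\varepsilon)$ and a mode $k\geq 2$ at which
\[
\mu_{k}(r_{*},\varepsilon)=0,\qquad \partial_{r}\mu_{k}(r_{*},\varepsilon)\neq 0.
\]
I would attack this via a singular perturbation / matched asymptotics analysis. On the inner scale $\sqrt{\varepsilon}$ around $p^{*}$, the rescaled profile $u_{r,\varepsilon}(\exp_{p^{*}}(\sqrt{\varepsilon}\,y))$ converges to the unique positive radial ground state $w$ of $-\Delta w+w-w^{p}=0$ on $\mathbb{R}^{d}$, whose linearisation $-\Delta+1-pw^{p-1}$ on non-radial modes is nondegenerate by classical results. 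On the outer scale, near the fixed boundary $\partial D_{r}$, the solution $u_{r,\varepsilon}$ is exponentially small and behaves like the fundamental solution of the Helmholtz-type operator $-\varepsilon\Delta_{g}+1$ with a prescribed boundary trace. Matching the two expansions yields an explicit leading-order formula for $\mu_{k}(r,\varepsilon)$ in terms of $r$, $\varepsilon$, and derivatives of $w$, from which the sign change and the simple transversal crossing should be extracted.

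Once such a crossing at $(r_{*},\varepsilon)$ is in hand, the Crandall--Rabinowitz theorem applied to $F(\cdot,\cdot,\varepsilon)$ (with $r$ as bifurcation parameter, $\varepsilon$ fixed and small) produces a smooth local branch of nontrivial zeros $(\phi_{s},r(s))$ with $\phi_{s}\not\equiv 0$; the corresponding domains $D=D_{r(s)}^{\phi_{s}}$ solve the overdetermined problem, are $C^{2,\alpha}$-close to the geodesic ball $D_{r_{*}}$ (hence contractible), but are not themselves geodesic balls since the bifurcating eigenmode has $k\geq 2$. The main obstacle, beyond the routine functional-analytic setup, is undoubtedly the spectral analysis of $L_{r,\varepsilon}$: ensuring that some non-translation, non-radial mode $\mu_{k}$ genuinely changes sign and does so with a simple zero. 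This requires tight asymptotic control on both scales and the nondegeneracy of the ground-state linearisation on higher spherical harmonics.
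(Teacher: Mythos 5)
Your plan starts from the wrong radial solution, and this is not a cosmetic choice: it changes the whole analytic regime. You take the Dirichlet solution on $\mathbb{S}^{d}\setminus \overline{D_{r}}$ obtained by ``standard variational methods'' and observe that on the inner scale $\sqrt{\varepsilon}$ around the antipode $p^{*}$ it converges to the whole-space ground state $w$. That is the least-energy solution concentrating at the point of the domain farthest from the boundary. But then the Neumann data $\partial_{\nu}u_{r,\varepsilon}$ on $\partial D_{r}$ and, a fortiori, every eigenvalue $\mu_{k}(r,\varepsilon)$ of the Dirichlet-to-Neumann linearization $L_{r,\varepsilon}$ is $O(e^{-c/\sqrt{\varepsilon}})$. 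Your program requires extracting a sign change and a simple transversal zero from a matched-asymptotics expansion of these exponentially small quantities; you give no mechanism for why any $\mu_{k}$, $k\geq 2$, should change sign at all, and I do not see one. The paper deliberately uses a \emph{different} radial solution: after rescaling to the sphere $\mathbb{S}^{d}(k)$ of curvature $k^{2}$ with a fixed unit hole, they construct (by a quantitative implicit function theorem, Proposition~\ref{Pr21}) the solution $u_{k,\lambda}$ which as $k\to 0$ converges to the radial solution $\tilde u_{\lambda}$ of the exterior problem in $\mathbb{R}^{d}\setminus B_{1}$, a bump at $O(1)$ distance from the hole, not at the antipode. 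Along that branch the Neumann data and the operator $H_{k,\lambda}$ are $O(1)$, and the whole bifurcation structure is inherited by perturbation from the already-established analysis of the exterior problem in $\mathbb{R}^{d}$ (Ros--Ruiz--Sicbaldi, \cite{RRS20}): the sign change of the relevant quadratic forms in $\lambda$ is taken from Proposition~\ref{Pr26} and transferred to $\mathbb{S}^{d}(k)$ by compactness (Proposition~\ref{Pr25}). Your proposal builds none of this and would have to re-derive the entire nondegeneracy/sign-change analysis from scratch in the much harder exponentially degenerate setting.

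There is a second concrete gap: Crandall--Rabinowitz requires the bifurcating eigenvalue to be simple. The $k$-th eigenspace of $\Delta_{\mathbb{S}^{d-1}}$ has multiplicity larger than one as soon as $d\geq 2$ and $k\geq 1$, so $\ker L_{r_{*},\varepsilon}$ will generically not be one-dimensional. You quotient out $k=0$ by the mean-zero condition and wave at $k=1$ (translations), but say nothing about higher $k$. The paper avoids this entirely by (i) restricting to a symmetry group $G\subset O(d)$ chosen so that the first nontrivial $G$-invariant spherical-harmonic eigenspace has odd multiplicity $m_{1}$ (assumption (G), with examples $\mathbb{D}_{n}$ for $d=2$ and $O(2)\times O(d-2)$ for $d\geq 3$), and (ii) using Krasnoselskii's theorem, which only needs the index of the linearization to change parity, not simplicity. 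If you want to keep Crandall--Rabinowitz you must restrict further, to a symmetry class in which the relevant eigenspace is literally one-dimensional, and verify the transversality $\partial_{r}\mu_{k}\neq 0$ — a condition the paper never needs and which, in the exponentially degenerate regime you have set up, looks out of reach.

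Finally, two smaller points. The theorem asserts $D$ is a perturbation of a \emph{small} geodesic ball; in your scheme $\varepsilon$ is fixed and small while $r_{*}$ is whatever the (unestablished) crossing analysis produces, with no reason to be small. And your bifurcation parameter is $r$, whereas the paper bifurcates in the equation coefficient $\lambda$ with the hole radius fixed; either could work in principle, but the paper's choice is what lets the $k\to 0$ limit reduce directly to the known $\mathbb{R}^{d}$ exterior problem, which is the whole point of the construction.
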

\begin{figure}[htbp]
\centering
\includegraphics[width=5.5cm]{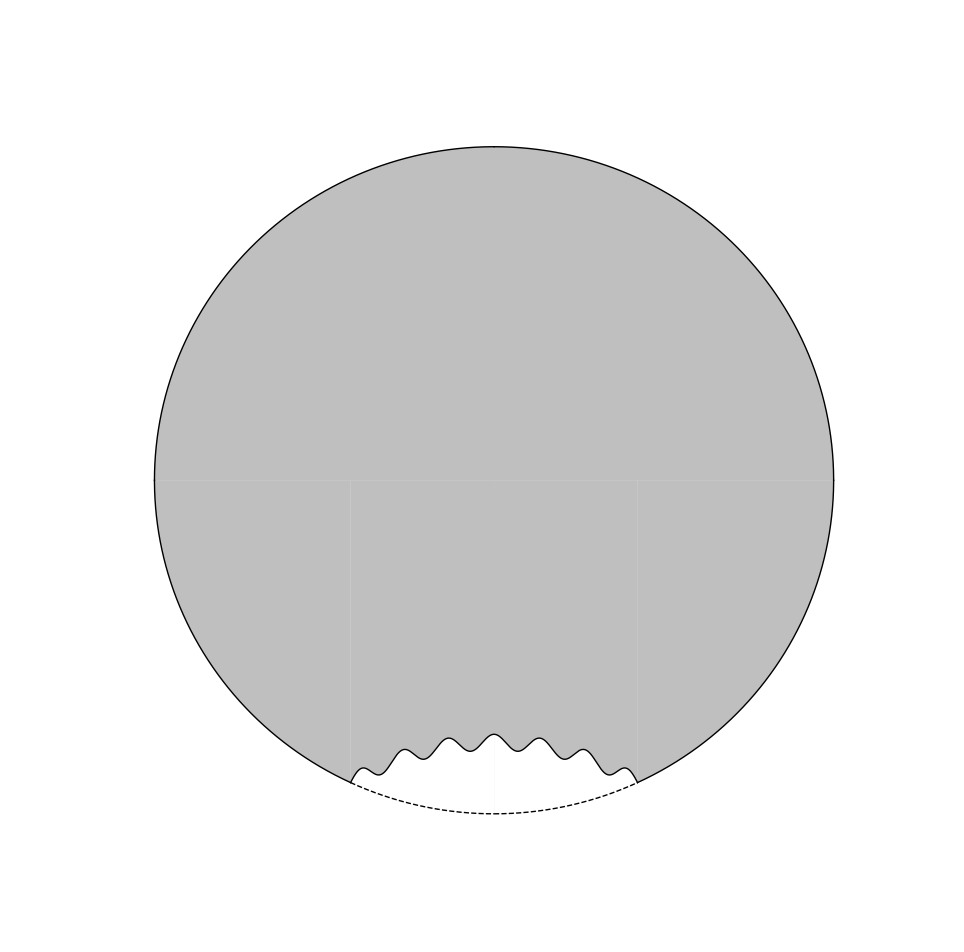}
\caption{The domain $\mathbb{S}^{d}\setminus D$}
\end{figure}

 A more precise formulation of the above result will be given in Section 2.

 \medskip

 The main idea of the proof is the following. First, one uses a dilation to pass to a problem posed in $\mathbb{S}^{d}(k)$, the sphere of radius $1/k$, where $k$ will be a small parameter. We take a geodesic ball of radius 1 in $\mathbb{S}^{d}(k)$, and consider now $\Omega_k$ the complement of such ball in $\mathbb{S}^{d}(k)$. The main idea is that, as $k\to 0$, the domain $\Omega_k$ converges (in a certain sense) to the exterior domain $\mathbb{R}^d \setminus B(0,1)$. Thanks to the result in \cite{RRS20}, we have the existence of nontrivial solutions of \eqref{eq03} for a suitable choice of $f$, bifurcating from a family of regular solutions $u_\lambda$ of the problem:
  \begin{gather}
 	\begin{cases}
 		-\lambda \Delta u +u-u^{p} =0&\emph{in $\R^{d} \setminus B(0,1)$ }, \\
 		 u >0&\emph{in $\mathbb{R}^{d}\setminus B(0,1)$ }, \\
 		u=0 &\emph{on $\partial B(0,1)$ },\\
 		\partial_{\nu} u=\emph{constant} &\emph{on $\partial B(0,1)$} .
 	\end{cases}
 \end{gather}
In this paper we first show that for $k$ sufficiently small there exists a similar family of solutions posed in $\Omega_k$. This is accomplished by making use of a (quantitative) Implicit Function Theorem. Then we study the behavior of the linearized operator by using a  perturbation argument, and taking into account the case studied in \cite{RRS20}. In such way, we can use the Krasnoselskii bifurcation theorem to show the existence of a branch of nontrivial solutions to $\eqref{eq03bis}$.

In our arguments, we rely on the study of the linearized operator given in \cite{RRS20}. At a certain point the assumption $p < \frac{d+2}{d-2}$ is needed in \cite{RRS20}, and hence our result is also restricted to that case. Moreover, such assumption is needed also in order to get $L^\infty$ uniform estimates on the solutions.

 \medskip

The rest of the paper is organized as follows. In Section \ref{Section 2} we set the notations and we give a more precise statement of Theorem \ref{Th01}. The existence of a radial family of solutions in $\Omega_k$ is shown in Section \ref{Section 3}. In Section \ref{Section 4} we construct the nonlinear Dirichlet-to-Neumann operator and compute its linearization under certain nondegeneracy assumptions. These hypotheses will be verified in Section \ref{Section 5}, and that section is also devoted to studying the properties of the linearized operator computed in Section \ref{Section 4}. With all those ingredients, we can use a local bifurcation argument to prove our main result; this is done in Section \ref{Section 6}.

\bigskip

{\bf Acknowledgment:} The authors wish to express their sincere gratitude to the referee for his/her careful reading and the many comments, which have definitively improved the quality of this work.

\section{Notations and statement of the main result}
\label{Section 2}

If $k>0$, let $\mathbb{S}^{d}(k)$ be the $d$-dimensional sphere of radius $\frac{1}{k}$ naturally embedded in $\mathbb{R}^{d+1}$ ($d\ge2$). We consider $\mathbb{S}^{d}(k)$ as a Riemannian manifold with the metric $g_k$ endowed by its embedding in $\mathbb{R}^{d+1}$. The sectional curvature of such manifold is equal to $k^{2}$.
When $k=1$ we write directly $\mathbb{S}^{d}$ as usual. We fix two opposite points $S,N \in \mathbb{S}^{d}(k)$ (let's say respectively the south and the north pole) and we use the exponential map of $\mathbb{S}^{d}(k)$ centered at $S$,
$$\mbox{exp}_S: B\left(0, \frac{\pi}{k}\right) \to \Sk \setminus\{N\},$$
where $B\left(0, \frac{\pi}{k}\right) \subset \R^d$ is the Euclidean ball of radius $\frac{\pi}{k}$ centered at the origin.
Given any continuous function $v: \mathbb{S}^{d-1} \to \left(0, \frac{\pi}{k}\right)$, we define the domain
\[  B_v = \mbox{exp}_S \left (  \left\{x\in\mathbb{R}^{d}:0\leq|x|<v\left(\frac{x}{|x|}\right)\right\} \right )\subset \Sk\,.
\]
The precise statement of our result is the following:

\begin{theorem} \label{Th11}
	Let $d\in\mathbb{N}$, $d\geq2$, let $1<p<\frac{d+2}{d-2}$ ($p>1$ if $d=2$). Then, there exists a real number $k_{0}>0$, such that for any $0<k<k_{0}$ the following holds true: there exist a sequence of real parameters $\lambda_{m} = \lambda_m(k)$ converging to some $\lambda_*(k) >0$, a sequence of nonconstant functions $v_{m} = v_m (k)\in C^{2,\alpha}(\mathbb{S}^{d-1})$ converging to $0$ in $C^{2,\alpha}$, and a sequence of positive functions $u_m\in C^{2,\alpha}(\mathbb{S}^{d}(k)\setminus B_{1+v_{m}})$, such that the equation
	\begin{gather}\label{eq102}
		\begin{cases}
			-\lambda_{m}\Delta u_m +u_m-u_m^{p} =0&\emph{in $\mathbb{S}^{d}(k)\setminus B_{1+v_{m}}$ }, \\
			u_m=0 &\emph{on $\partial B_{1+v_{m}}$ },\\
			\partial_{\nu} u_m=\emph{constant} &\emph{on $\partial B_{1+v_{m}}$ },
		\end{cases}
	\end{gather}
is satisfied.
\end{theorem}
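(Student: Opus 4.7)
The plan is to reduce the proof to a bifurcation analysis around a family of rotationally symmetric (``radial'') solutions of the Dirichlet problem on $\Omega_k := \mathbb{S}^{d}(k)\setminus B_1$, treating $\lambda$ as the bifurcation parameter. The whole argument is a small-$k$ perturbation of the analysis already carried out in \cite{RRS20} in the limiting exterior domain $\mathbb{R}^d\setminus B(0,1)$.

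First I would establish, for every $k\in(0,k_0)$ and every $\lambda$ in a fixed compact interval $I\subset(0,\infty)$, the existence of a unique positive $SO(d)$-invariant solution $u_{\lambda,k}$ of the Dirichlet problem
\[
-\lambda\,\Delta_{g_k} u + u - u^{p}=0 \text{ in } \Omega_k, \qquad u=0 \text{ on } \partial\Omega_k.
\]
Since $\partial\Omega_k=\partial B_1$ is a geodesic sphere and $u_{\lambda,k}$ depends only on the geodesic distance to $S$, the value $\partial_\nu u_{\lambda,k}$ is automatically constant on $\partial B_1$, so this radial branch already solves the overdetermined problem trivially. Existence would be obtained by a quantitative Implicit Function Theorem, starting from the unique radial solution $u_\lambda$ in $\mathbb{R}^d\setminus B(0,1)$ given by \cite{RRS20} and observing that, in geodesic polar coordinates, the equation on $\Omega_k$ is a regular $k$-perturbation of the one in $\mathbb{R}^d\setminus B(0,1)$ in weighted Hölder norms that exploit the exponential decay of $u_\lambda$ at infinity.

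Next, for $v\in C^{2,\alpha}(\mathbb{S}^{d-1})$ small and $\lambda$ near a fixed $\lambda_0$ (the bifurcation value of \cite{RRS20}), I would solve the Dirichlet problem on $\mathbb{S}^{d}(k)\setminus B_{1+v}$, call the solution $u_{v,\lambda,k}$, and define the nonlinear Dirichlet-to-Neumann operator
\[
F_k(v,\lambda) := \partial_{\nu} u_{v,\lambda,k}\big|_{\partial B_{1+v}} - \fint_{\partial B_{1+v}} \partial_{\nu} u_{v,\lambda,k}\, d\sigma,
\]
viewed, via pullback by $\exp_S$, as an operator from a neighborhood of $0$ in the mean-zero subspace of $C^{2,\alpha}(\mathbb{S}^{d-1})$ to mean-zero functions of $C^{1,\alpha}(\mathbb{S}^{d-1})$. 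Zeros of $F_k$ correspond bijectively to solutions of the overdetermined problem of the desired form. I would then compute the linearized operator $D_v F_k(0,\lambda)$ at the radial branch, showing that it is an elliptic pseudodifferential operator of order one on $\mathbb{S}^{d-1}$, Fredholm of index zero, and diagonal in spherical harmonics. The key claim is that, as $k\to 0$, $D_v F_k(0,\lambda)\to D_v F_\infty(0,\lambda)$ in operator norm, uniformly for $\lambda$ in a neighborhood of $\lambda_0$, where $F_\infty$ is the Dirichlet-to-Neumann operator studied in \cite{RRS20}. Since \cite{RRS20} exhibits a simple eigenvalue of $D_v F_\infty(0,\lambda)$ crossing zero transversally at $\lambda_0$ along a single spherical harmonic mode, standard perturbation theory for isolated simple eigenvalues yields, for each sufficiently small $k>0$, some $\lambda_*(k)$ near $\lambda_0$ at which $D_v F_k(0,\lambda_*(k))$ has a one-dimensional kernel on which the $\lambda$-derivative acts transversally.

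With these ingredients in place I would apply the Krasnoselskii (or Crandall–Rabinowitz) bifurcation theorem to $F_k(\cdot,\lambda)$: this produces, for each $k\in(0,k_0)$, a continuum of nontrivial zeros of $F_k$ emanating from $(0,\lambda_*(k))$. Taking any sequence along the continuum accumulating at $(0,\lambda_*(k))$ gives the sequences $\lambda_m=\lambda_m(k)\to \lambda_*(k)$ and nonconstant $v_m=v_m(k)\to 0$ in $C^{2,\alpha}(\mathbb{S}^{d-1})$ of Theorem \ref{Th11}, with $u_m:=u_{v_m,\lambda_m,k}$ the associated solutions. The main obstacle I expect is the convergence step $D_v F_k(0,\lambda)\to D_v F_\infty(0,\lambda)$: the limit domain is noncompact while $\Omega_k$ is compact, so this is not a routine compact perturbation. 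One needs quantitative weighted estimates, relying on exponential decay of $u_\lambda$ (and hence on the subcritical condition $p<\frac{d+2}{d-2}$, which is used in \cite{RRS20} to obtain uniform $L^\infty$ bounds and to analyze the linearization), to show that the part of $\mathbb{R}^d\setminus B(0,1)$ lying beyond geodesic radius $\pi/k$ contributes only $o(1)$ as $k\to 0$. Once such weighted estimates are in hand, the Fredholm property, simplicity of the eigenvalue, and the transversality condition transfer to all sufficiently small $k$ by elementary perturbation theory, closing the argument.
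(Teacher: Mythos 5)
Your outline reproduces the paper's broad architecture (radial Dirichlet branch via a quantitative IFT, nonlinear Dirichlet-to-Neumann operator, bifurcation), but it rests on a factual misreading of \cite{RRS20} that makes the last step fail as stated. In \cite{RRS20} — and in the present paper — there is \emph{no} simple eigenvalue of the linearized operator crossing zero transversally, and no Crandall--Rabinowitz structure. The degeneracy at $\lambda_*$ has kernel of dimension $m_1$, the (possibly $>1$) multiplicity of the lowest admissible spherical harmonic mode, and no transversality of the $\lambda$-dependence is ever established (the paper even remarks that the degenerate point need not be isolated; see the remark after Theorem \ref{Th401}). The bifurcation is detected by Krasnoselskii's theorem through an \emph{odd jump of the Morse index} between two values $\bar\lambda<\lambda_*(k)<\lambda_1$ at which the linearization is nondegenerate. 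Your appeal to ``standard perturbation theory for isolated simple eigenvalues'' therefore has nothing to latch onto; you cannot deduce a one-dimensional transversal kernel for $D_vF_k(0,\cdot)$ at small $k$, because the limit problem does not have one.

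Closely related, and equally serious, is the omission of the symmetry group $G$ with property (G): $i_1\ge 2$ and $m_1$ odd. Without restricting to $G$-symmetric deformations, the linearized DtN operator always kills the $d$-dimensional $i=1$ spherical harmonic subspace (coming from rigid rotations of the geodesic ball inside $\mathbb{S}^d(k)$), so the linearization has a nontrivial kernel at \emph{every} $\lambda$ and the bifurcation cannot be isolated; and without $m_1$ odd one cannot get the parity jump needed for Krasnoselskii. Both ingredients are load-bearing in the paper (Lemma \ref{Le51}) and would need to appear in your argument. A secondary methodological difference: the paper does not prove operator-norm convergence of the DtN linearizations as $k\to 0$, but instead controls the sign of the first eigenvalue of $H_{k,\lambda}$ through min--max characterizations of the associated quadratic forms $Q_{k,\lambda}$ and weak-limit/decay arguments (Propositions \ref{Pr25}--\ref{Pr26}, Lemma \ref{le43}); this weaker information suffices precisely because the conclusion sought is only an odd index jump rather than persistence of a simple eigenvalue. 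If you insist on operator-norm convergence you would be proving more than is needed, and it is not clear it holds in the strong form you assert.
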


Observe that Theorem \ref{Th01} follows at once from the previous result, by a scale change transforming $\mathbb{S}^{d}(k)$ into $\mathbb{S}^{d}$.

\medskip

Throughout the paper we shall use the coordinates in $\Sk$ given by the exponential map centered at the south pole composed with polar coordinates in $\R^d$. In other words, we write:
\begin{equation} \label{coord}  \begin{array}{c} X: \left[0, \frac{\pi}{k}\right) \times \S^{d-1} \to \Sk \setminus \{N\} \\
X(r, \theta) = \mbox{exp}_S(r\,  \theta). \end{array} \end{equation}
Observe that $X$ is well defined but singular at the south pole $S$.

\medskip
With this notation, the set $B_v$ can be written as:
$$ B_v=\left\{(r, \theta) \in \left[0, \frac{\pi}{k}\right) \times \S^{d-1}: \  r < v(\theta) \right\}.$$ Moreover, the standard metric $g_k$ on $\Sk$ and its corresponding Laplace-Beltrami operator can be written in $(r, \theta)$ coordinates as:
\[g_k=dr^{2}+S_{k}^{2}(r)d\theta^{2},\]
\[\Delta:=\Delta_{g_{k}}=\partial^{2}r+(d-1)\frac{C_{k}(r)}{S_{k}(r)}\partial_r+\frac{1}{S_{k}^{2}(r)}\Delta_{\mathbb{S}^{d-1}},\]
where \begin{equation} \label{cos-sin} S_{k}(r)=\frac{\sin(kr)}{k}, \ C_{k}(r)=\cos(kr).\end{equation}
See for instance \cite{C84}.

Sometimes it will be useful to consider $S_k(r)$, $C_k(r)$ defined for all $r>0$; in such case, $S_k(r)= C_k(r)=0$ for all $r \geq \pi/k$.

\medskip
In order to prove Theorem \ref{Th11} we will make use of symmetry groups. Given a group of isometries acting on $\mathbb{S}^{d-1}$, we say that $\Omega\subset\mathbb{S}^{d}(k)$ is $G$-symmetric if, working in coordinates:
\[
(r, \theta) \in \Omega \Rightarrow (r, g(\theta)) \in \Omega
\]
for any $g \in G$. Let us point out that in the case in which $G = \mathcal{G}$ the group of all possible symmetries of $\S^{d-1}$, then $G$-symmetry is just axial symmetry in $\Sk$ (with respect to the axis of $\mathbb{R}^{d+1}$ passing through the south and the north poles).

\medskip

We define:
\begin{align*}
	C^{k,\alpha}_{G}(\mathbb{S}^{d-1})&=\{u\in C^{k,\alpha}(\mathbb{S}^{d-1}):u=u\circ g ~\forall g\in G\}.
\end{align*}
For later purposes we also define the set of functions in $C^{k,\alpha}_{G}(\S^{d-1})$ whose mean is $0$:
\[ C^{k,\alpha}_{G,0}(\mathbb{S}^{d-1})= \left \{u\in C^{k,\alpha}_G(\mathbb{S}^{d-1}):\ \int_{\S^{d-1}} u=0 \right \}.	 \]

\medskip

If $\Omega\subset\mathbb{S}^{d}(k)$ is $G$-symmetric, we define the following H\"{o}lder spaces of $G$-symmetric functions:
\begin{align*}
C^{k,\alpha}_{G}(\Omega)&=\{u\in C^{k,\alpha}(\Omega):u(r, \theta)=u(r, g(\theta)) \ \forall g\in G\}.
\end{align*}
In addition, we denote the Sobolev spaces of $G$-symmetric functions as follows:
\begin{align*}
  H^{1}_{G}(\Omega)&=\{u\in H^{1}(\Omega): \ u(r, \theta)=u(r, g(\theta)) \ \forall g\in G\}, \\
   H^{1}_{0,G}(\Omega)&=\{u\in H^{1}_{0}(\Omega):\ u(r, \theta)=u(r, g(\theta)) \ \forall g\in G\}.
\end{align*}

\medskip

Let us recall that the norm of the Sobolev space $H^1(\Omega)$ is given by:
$$ \| u \|_{H^1(\Omega)} = \left (\int_{\Omega}\left(|\nabla u|^{2}+u^{2}\right) \textnormal{dvol}_{g_{k}}\right )^{1/2}$$
where the gradient depends on the metric $g_k$.
If $\Omega = \Sk \setminus B_v$, we can write this expression in coordinates $(r, \theta)$ as:
$$ \int_{\Omega}\left(|\nabla u|^{2}+u^{2}\right) \textnormal{dvol}_{g_k}=\int_{v(\theta)}^{\pi/k} \int_{\mathbb{S}^{d-1}} S^{d-1}_{k}(r)\left[(\partial_{r} u)^{2}+\frac{|\nabla_{\theta}u|^{2}}{S^2_{k}(r)}+u^{2}\right] d\theta \, d r.$$
In the axisymmetric case of $ \Sk$ with respect to the two poles (that is, $G= \mathcal{G}$ the group of all possible isometries in $\S^{d-1}$) we write a subscript $r$ instead of $G$, to highlight that the spaces depend only on the $r$ variable. Hence, we shall write:
$$H_{r}^1(\Sk \setminus B_1) = \left \{u: \left[1, \frac{\pi}{k}\right)\to \R, \ \| u \|_k < \infty \right \},$$
where
\begin{equation} \label{radialnorm} \| u \|_k = \left ( \int_{1}^{\pi/k} S^{d-1}_{k}(r) \left[(\partial_{r} u)^{2}+ u^{2}\right] dr \right )^{1/2}. \end{equation}
Clearly,
$$ \| u \|_{H^1_{r}(\Sk \setminus B_1)}=  \sqrt{\omega_{d-1}}\, \|u \|_k,$$
where $\omega_{d-1}$ is the $(d-1)$-dimensional measure of the unit sphere $\mathbb{S}^{d-1}$.

\medskip

Moreover we shall make use of the notation:
$$H_{0,r}^1(\Sk \setminus B_1) = \left \{u \in H_{r}^1(\Sk \setminus B_1), \ u(1)=0 \right \}.$$
Observe that in the above definitions the functions $u(r)$ are absolutely continuous in $\left[1, \frac{\pi}{k}\right)$ (possibly singular at $\frac{\pi}{k}$).

\medskip

We denote by $\mu_{i}=i(i+d-2),i\in\mathbb{N}$ the eigenvalues of the Laplace-Beltrami operator $\Delta_{\S^{d-1}}$ in $\mathbb{S}^{d-1}$. From now on, we shall fix a symmetry group $G$ in $\S^{d-1}$ satisfying the following property:
\begin{itemize}
  \item [(G)] Defining by $\{\mu_{i_{l}}\}_{l\in\mathbb{N}}$ the eigenvalues of $\Delta_{\mathbb{S}^{d-1}}$ restricted to $G$-symmetric functions and by $m_{l}$ their multiplicities, we require $i_{1}\geq2$ and $m_{1}$ odd.
\end{itemize}

A group satisfying those properties is the dihedric group $\mathbb{D}_n$, $n \geq 2$, if $d=2$. For $d>2$ one can take for instance $G=O(2) \times O(d-2)$. Other examples are possible, see \cite[Remark 2.2]{RRS20}.

\section{Existence of the axisymmetric solution to the Dirichlet problem}
\label{Section 3}
As mentioned before, we will use a local bifurcation argument. On that purpose, we first need to build an axially symmetric solution to the problem:
\begin{equation}\label{eq13}
  \begin{cases}
  -\lambda\Delta u+u-u^{p}=0 &\mbox{in $\mathbb{S}^{d}(k)\setminus B_{1}$, }\\
  u=0 &\mbox{on $\partial B_{1}$, }
   \end{cases}
\end{equation}
for suitable values of $\lambda$.
This is the goal of this section. We will do it for $k$ sufficiently small (i.e. for spheres $\Sk$ with large radius, and then small curvature).

\medskip

By using the coordinates $(r, \theta)$ as in \eqref{coord}, we have to find a solution $u(r)$ of the ODE problem:
\begin{equation}\label{eq14}
  \begin{cases}
  -\lambda\left[\partial_{r}^{2}+(d-1)\frac{C_{k}(r)}{S_{k}(r)}\partial_{r}\right]u
  +u-u^{p}=0 & r \in \left(1,\frac{\pi}{k}\right), \\
  u(1)=0, & \\
  u'({\frac{\pi}{k}})=0, &
  \end{cases}
\end{equation}
where $C_k(r)$ and $S_k(r)$ are defined in \eqref{cos-sin}. Observe that, for any fixed $r>0$,
$$ \frac{C_{k}(r)}{S_{k}(r)} \to \frac{1}{r} \mbox{ as } k \to 0.$$
Hence, at least formally, a limit problem for \eqref{eq14} as $k \to 0$ is:
\begin{equation} \label{preradial}
	\begin{cases}
		-\lambda\left(\partial_{r}^{2}+\frac{d-1}{r}\partial_{r}\right)u
		+u-u^{p} =0&r >1, \\
		u(1)=0. &
	\end{cases}
\end{equation}
Those are just radially symmetric functions of the Dirichlet problem:
\begin{equation} \label{radial}
	\begin{cases}
		-\lambda \Delta u+u-{u}^{p}=0 &\mbox{in $\mathbb{R}^{d}\setminus B_{1}$, }\\
		{u} =0 &\mbox{on $\partial B_{1}$ }.
	\end{cases}
\end{equation}
In the proposition below we list some known properties of this problem.

\begin{proposition} \label{list} We have:
	\begin{enumerate}
		\item[a)] For any $\lambda>0$, there exists a positive radially symmetric $C^{\infty}$ solution of \eqref{radial}. This solution increases in the radius up to a certain maximum, and then it decreases and converges to $0$ at infinity exponentially.
		\item[b)] Such positive and radial solution to \eqref{radial} is unique: we denote it by $\tilde{u}_{\lambda}$. Moreover it has an exponential decay (see \cite{strauss}, for instance):
		
		$$ \tilde{u}_{\lambda}(x) \sim |x|^{\frac{d-1}{2}} e^{- \frac{1}{\sqrt{\lambda}} |x|} \mbox{ as } |x| \to +\infty.$$
		\item[c)] Set $B_1^c = \mathbb{R}^{d}\setminus B_{1}$, let $H^1_{0,r}(B_1^c)$ be the classical Sobolev space $H^1_0(B_1^c)$ restricted to radial functions, and let $H^{-1}_{r}(B_1^c)$ be its dual. Let us define the linearized operator $L_\lambda: H^1_{0,r}(B_1^c) \to H^{-1}_{r}(B_1^c)$,
		\begin{equation}\label{linear-d}
			L_\lambda(\phi) = - \lambda \Delta \phi + \phi  - p \tilde{u}_{\lambda}^{p-1} \phi \,,
		\end{equation}
		and consider the eigenvalue problem:
		$$ L_\lambda(\phi) = \tau\, \phi.$$
		This problem has a unique negative eigenvalue and no zero eigenvalues. In other words, $\tilde{u}_{\lambda}$ is nondegenerate in $H^1_{0,r}(B_1^c)$ and has Morse index 1. We denote by $\tilde{z}_{\lambda} \in H^1_{0,r}(B_1^c)$ (normalized by $\| \tilde{z}_{\lambda} \|=1$) the positive eigenfunction with negative eigenvalue, i.e.
		\begin{equation} \label{z}
			\left\{\begin{array} {ll}
				-\lambda\, \Delta \tilde{z}_{\lambda} + \tilde{z}_{\lambda}  - p \tilde{u}_{\lambda}^{p-1} \tilde{z}_{\lambda} = \tilde{\tau}_{\lambda} \tilde{z}_{\lambda} & \mbox{in }\; B_1^c, \\
				\tilde{z}_{\lambda} = 0 & \mbox{on }\; \partial B_1, \\
			\end{array}\right.
		\end{equation}
		where $\tilde{\tau}_{\lambda}<0$. Moreover $\tilde{z}_{\lambda}$ is a $C^{\infty}$ function.
	\end{enumerate}
\end{proposition}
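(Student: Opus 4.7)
The plan is to reduce the PDE, via radial symmetry, to a one-dimensional ODE on $(1,\infty)$ with weight $r^{d-1}$, and then to treat the three items with essentially standard tools — variational methods for existence, classical ODE uniqueness results for uniqueness, and 1D Sturm–Liouville theory plus a Pohozaev-type identity for the spectral statement in (c). I view (a) and (b) as largely classical, while (c) — nondegeneracy together with the Morse index count — is the main technical point.

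For part (a), I would obtain $\tilde u_\lambda$ as a ground state by minimizing the energy
\[
J_\lambda(u) = \frac{\lambda}{2}\int_{B_1^c}|\nabla u|^2 + \frac{1}{2}\int_{B_1^c} u^2 - \frac{1}{p+1}\int_{B_1^c}|u|^{p+1}
\]
on the Nehari manifold restricted to $H^1_{0,r}(B_1^c)$; compactness of the embedding of radial $H^1$ into $L^{p+1}(B_1^c)$ in the subcritical range (Strauss-type lemma adapted to the exterior of a ball) yields a minimizer. Replacing $u$ by $|u|$ and applying the strong maximum principle and Hopf's lemma gives strict positivity in $B_1^c$ and $\tilde u_\lambda'(1)>0$. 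The qualitative shape (a single maximum followed by monotone decay to zero) follows from the radial ODE by a phase-plane argument on $(v,v')$ with $v(r)=r^{(d-1)/2}\tilde u_\lambda(r)$: once $\tilde u_\lambda'$ vanishes it cannot vanish again without contradicting uniqueness of the decaying trajectory at infinity. For part (b), uniqueness of the positive radial solution in the exterior of a ball is a known result of Kwong–Felmer–Quaas type, and the asymptotic $\tilde u_\lambda(x)\sim|x|^{(d-1)/2}e^{-|x|/\sqrt\lambda}$ is obtained by comparison with the linearized equation at infinity $-\lambda\Delta u+u=0$, whose decaying radial solutions have exactly this behavior (this is the content of Strauss's classical decay lemma).

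The main obstacle is part (c). A lower bound on the Morse index is immediate: testing $L_\lambda$ against $\tilde u_\lambda$ itself yields
\[
\langle L_\lambda\tilde u_\lambda,\tilde u_\lambda\rangle = (1-p)\int_{B_1^c}\tilde u_\lambda^{\,p+1} < 0,
\]
so there is at least one negative eigenvalue; hence $\tilde z_\lambda$ exists and, being a radial ground state, can be chosen positive by the usual argument that the first eigenfunction of a self-adjoint Schrödinger-type operator does not change sign. To show there is exactly one negative eigenvalue and no zero eigenvalue, I would pass to the Sturm–Liouville form $-\lambda(r^{d-1}u')' + r^{d-1}(1-p\tilde u_\lambda^{p-1})u = \tau\,r^{d-1}u$ on $[1,\infty)$ with Dirichlet data at $r=1$ and decay at infinity. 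Sturm oscillation theory counts negative eigenvalues in terms of zeros of ODE solutions; here the natural test functions $\tilde u_\lambda$ and $\tilde u_\lambda'$ (the latter satisfying $L_\lambda\tilde u_\lambda' = -\lambda(d-1)r^{-2}\tilde u_\lambda'$) have respectively zero and one interior zero, bounding the Morse index from above by $1$. Nondegeneracy — the genuinely delicate point — I would establish by contradiction: any radial element of $\ker L_\lambda$ gives, after combining the equations for $\tilde u_\lambda$ and the candidate kernel element with a scaling vector field, a Pohozaev-type identity whose sign depends on the exponent, and the subcriticality hypothesis $1<p<(d+2)/(d-2)$ forces the identity to vanish only when the kernel element is identically zero. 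This is also the step where the assumption on $p$ becomes essential, consistent with the authors' remark that (c) is taken from the analysis in \cite{RRS20}. Regularity of $\tilde z_\lambda$ is then standard elliptic bootstrap.
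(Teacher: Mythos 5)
The paper does not prove this proposition at all; it simply cites \cite{esteban} for part a) and \cite{f-m-tanaka, tang} for parts b) and c). Your proposal is therefore asymmetric with the paper: you sketch a full proof where the authors merely invoke the literature. For parts a) and b) your sketch is essentially the classical route (Nehari minimization with Strauss compactness, maximum principle, ODE shape analysis, Strauss asymptotics, Kwong/Felmer--Quaas--type uniqueness on the exterior of a ball), which is consistent in spirit with the cited references.

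Part c) is the part you yourself call the main technical point, and your sketch does not close. The Sturm oscillation argument you propose uses $\tilde u_\lambda'$ as a comparison function, but $\tilde u_\lambda'$ does not belong to $H^1_{0,r}(B_1^c)$: by Hopf's lemma $\tilde u_\lambda'(1)>0$, so it violates the Dirichlet boundary condition, and the identity $L_\lambda \tilde u_\lambda' = -\lambda(d-1)r^{-2}\tilde u_\lambda'$ has a nonconstant, $r$-dependent ``eigenvalue'' rather than a genuine spectral parameter. Both features mean that counting zeros of $\tilde u_\lambda'$ does not directly bound the Morse index of $L_\lambda$ by one; the comparison must be set up much more carefully (typically via Wronskian or shooting arguments in the radial variable, accounting for the boundary mismatch at $r=1$). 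Similarly, the claim that nondegeneracy follows from a Pohozaev-type identity with the subcriticality of $p$ forcing the kernel to vanish is not the argument in the references the paper actually cites; \cite{tang} and \cite{f-m-tanaka} establish uniqueness and nondegeneracy through a delicate Wronskian/monotonicity analysis of the radial ODE, and the relevant restriction there is $p>1$ rather than the Sobolev exponent (the Sobolev bound enters elsewhere in the paper, in the compactness for part a) and in the analysis taken from \cite{RRS20} in Section 5). Finally, you attribute part c) to \cite{RRS20}, but the paper's proof attributes it to \cite{f-m-tanaka, tang}; the role of \cite{RRS20} in the paper concerns a different linearized operator (Proposition \ref{Pr26}), not Proposition \ref{list}.
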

Statement a) is quite well known and has been proved in \cite{esteban}, for instance. The results b) and c) are more recent and have been obtained in \cite{f-m-tanaka, tang}.

\medskip

We now state the main result of this section.
\begin{proposition} \label{Pr21} Given $\e\in (0,1)$, there exists $k_0>0$ such that for any $k \in (0, k_0)$ and any $\lambda \in [\e, 1/\e]$ there exists a positive solution $u_{k, \lambda}$ to the problem (\ref{eq14}). Moreover, for any $\lambda \in [\e, 1/\e]$,
	
\begin{equation} \label{limitu} \lim_{k \to 0} \| u_{k, \lambda} - \tilde{u}_{\lambda}\|_k =0, \end{equation}
where $\tilde{u}_{\lambda}$ is the unique positive radial solution of \eqref{radial} and $\| \cdot \|_{k}$ is given by \eqref{radialnorm}.
\end{proposition}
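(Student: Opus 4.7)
The plan is to apply a quantitative Implicit Function Theorem around the Euclidean profile $\tilde u_\lambda$, treating \eqref{eq14} as a perturbation of the radial Euclidean problem \eqref{preradial}. I work in the radial Sobolev space $X_k=H^1_{0,r}(\Sk\setminus B_1)$ with the $k$-weighted norm \eqref{radialnorm}, and define the operator $F_k:[\e,1/\e]\times X_k\to X_k^*$ via the weak formulation
$$\langle F_k(\lambda,u),v\rangle=\int_1^{\pi/k}S_k^{d-1}(r)\bigl[\lambda\,u'(r)v'(r)+u(r)v(r)-u(r)^p v(r)\bigr]\,dr.$$
Writing $u=\tilde u_\lambda+\phi$, I seek a small $\phi\in X_k$ solving $F_k(\lambda,\tilde u_\lambda+\phi)=0$; the restriction of $\tilde u_\lambda$, initially defined on $[1,\infty)$, to $[1,\pi/k)$ is legitimate because of its exponential decay (Proposition \ref{list}(b)).

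The first ingredient is smallness of the residual $F_k(\lambda,\tilde u_\lambda)$ in $X_k^*$. Since $\tilde u_\lambda$ solves \eqref{preradial} exactly, the residual comes only from the discrepancies between the weight $S_k^{d-1}(r)$ and $r^{d-1}$, and between the drift $(d-1)k\cot(kr)$ and $(d-1)/r$. Pointwise these errors are $O(k^2 r^2)$ on $kr\le 1$, while on $kr\ge 1$ the exponential decay of $\tilde u_\lambda$ and $\tilde u_\lambda'$ absorbs the apparent blow-up of $\cot(kr)$ near $r=\pi/k$. Testing against $v\in X_k$ and using Cauchy--Schwarz yields $\|F_k(\lambda,\tilde u_\lambda)\|_{X_k^*}\to 0$ as $k\to 0$, uniformly for $\lambda\in[\e,1/\e]$.

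The crucial ingredient is uniform invertibility of the linearization $L_{k,\lambda}\phi=-\lambda\Delta\phi+\phi-p\tilde u_\lambda^{p-1}\phi$ on $X_k$. I argue by contradiction: assume sequences $k_n\to 0$, $\lambda_n\to \lambda_\infty\in[\e,1/\e]$ and $\phi_n\in X_{k_n}$ with $\|\phi_n\|_{k_n}=1$ but $\|L_{k_n,\lambda_n}\phi_n\|_{X_{k_n}^*}\to 0$. On each fixed compact interval $[1,R]$ the weights $S_{k_n}^{d-1}(r)$ and $r^{d-1}$ are comparable, so a weak limit $\phi_\infty\in H^1_{0,r}(\mathbb R^d\setminus B_1)$ exists, and passing to the limit against smooth compactly supported test functions shows $L_{\lambda_\infty}\phi_\infty=0$; Proposition \ref{list}(c) then forces $\phi_\infty=0$. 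It remains to rule out mass escaping toward the pole $r=\pi/k_n$: since $p\tilde u_\lambda^{p-1}$ is exponentially small for $r\ge R$, on that tail $L_{k_n,\lambda_n}$ reduces essentially to the coercive operator $-\lambda_n\Delta+1$, and testing the equation against a cutoff of $\phi_n$ supported in $\{r\ge R\}$ gives $\int_R^{\pi/k_n}S_{k_n}^{d-1}[(\partial_r\phi_n)^2+\phi_n^2]\,dr\to 0$; combined with strong $L^2_{\mathrm{loc}}$ convergence to $\phi_\infty=0$ on $[1,R]$, this contradicts $\|\phi_n\|_{k_n}=1$.

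With these two estimates, a Banach fixed point applied to $\phi=-L_{k,\lambda}^{-1}\bigl[F_k(\lambda,\tilde u_\lambda)+N_k(\lambda,\phi)\bigr]$, where $N_k$ gathers the quadratic remainder in $\phi$, produces a unique small $\phi_{k,\lambda}\in X_k$, and hence $u_{k,\lambda}=\tilde u_\lambda+\phi_{k,\lambda}$ satisfies \eqref{eq14} together with \eqref{limitu}. Positivity is obtained by a bootstrap: elliptic regularity upgrades $X_k$-closeness to $C^2_{\mathrm{loc}}$ closeness, hence $u_{k,\lambda}>0$ on every fixed $[1,R]$ where $\tilde u_\lambda$ is bounded away from zero; on the tail $[R,\pi/k)$ the function $u_{k,\lambda}$ is uniformly small, so $1-u_{k,\lambda}^{p-1}>0$ and the strong maximum principle applied to $-\lambda\Delta+(1-u_{k,\lambda}^{p-1})$ rules out sign changes. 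The principal obstacle I expect is the invertibility step: transferring nondegeneracy of $\tilde u_\lambda$ from $\mathbb{R}^d\setminus B_1$ to $\Sk\setminus B_1$ when the norms and measures themselves depend on $k$, and in particular excluding loss of mass to the singular endpoint $r=\pi/k$.
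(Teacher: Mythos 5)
Your strategy — a quantitative Inverse/Implicit Function Theorem argument in $H^1_{0,r}(\Sk\setminus B_1)$ with an approximate solution built from $\tilde u_\lambda$, together with a contradiction-and-weak-limit argument for uniform invertibility of the linearization — is the same framework the paper uses. But there is a genuine gap in your residual estimate when $d=2$, and it stems from one specific choice you make: you take the reference point to be the raw restriction of $\tilde u_\lambda$ to $[1,\pi/k)$, whereas the paper replaces it with $v=\tilde u_\lambda\,\chi_k$, where $\chi_k$ is a cutoff equal to $1$ on $(1,\pi/\sqrt k)$ and vanishing beyond $2\pi/\sqrt k$. This cutoff is not cosmetic: it is what makes condition (A1) of the quantitative IFT verifiable in all dimensions.

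Here is the concrete problem. Integrating by parts and using the equation for $\tilde u_\lambda$, the residual functional is
\begin{equation*}
\langle F_k(\lambda,\tilde u_\lambda),w\rangle=(d-1)\lambda\int_1^{\pi/k}S_k^{d-1}(r)\Big(\tfrac1r-\tfrac{C_k(r)}{S_k(r)}\Big)\,\tilde u_\lambda'(r)\,w(r)\,dr.
\end{equation*}
To bound this by $o_k(1)\|w\|_k$ via Cauchy--Schwarz with weight $S_k^{d-1}$, you need
\begin{equation*}
\int_1^{\pi/k}S_k^{d-1}(r)\Big(\tfrac1r-\tfrac{C_k(r)}{S_k(r)}\Big)^2|\tilde u_\lambda'(r)|^2\,dr\longrightarrow 0.
\end{equation*}
Near the north pole, writing $s=\pi/k-r$, one has $S_k(r)\sim s$ and $C_k(r)/S_k(r)\sim -1/s$, so the integrand behaves like $s^{\,d-1}\cdot s^{-2}\cdot|\tilde u_\lambda'(\pi/k)|^2=s^{\,d-3}\,|\tilde u_\lambda'(\pi/k)|^2$. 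For $d\ge 3$ this is integrable at $s=0$, but for $d=2$ it is $\sim s^{-1}$, so the integral diverges logarithmically. Your claim that ``the exponential decay of $\tilde u_\lambda'$ absorbs the apparent blow-up of $\cot(kr)$ near $r=\pi/k$'' is not correct here: the exponential factor is essentially constant over the boundary layer $s\in(0,1)$, so it contributes only a constant prefactor and cannot tame the $\int_0 ds/s$ divergence. Thus for $d=2$ the residual is not controlled by $\|w\|_k$ by the argument you sketch. The paper's cutoff $\chi_k$ keeps the support of $v$ (and hence of the residual integrand) in $(1,2\pi/\sqrt k)$, a region where $kr\le 2\pi\sqrt k\to 0$, so the discrepancy $\tfrac1r-\tfrac{C_k}{S_k}$ is uniformly small and the pole singularity never enters. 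You would need either to adopt a cutoff of this type, or to replace the Cauchy--Schwarz bound by a sharper weighted estimate (e.g.\ a Hardy-type inequality) that exploits more than the $L^2(S_k^{d-1}\,dr)$ structure; as written the step fails for $d=2$.

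Two minor remarks: first, your invertibility argument by contradiction (local weak limits plus tail coercivity) matches the paper's, and your identification of tail-mass loss at $r=\pi/k$ as a delicate point is correct there; second, the paper works with the nonlinearity $(u^+)^p$ in the weak formulation and recovers positivity by the maximum principle afterward, a small device worth retaining so that the map is $C^1$ on the whole space.
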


In order to prove the Proposition \ref{Pr21}, we use the following version of the Inverse Function Theorem. We include its proof in the Appendix for the sake of completeness.
\begin{proposition} \label{Pr22} Let $Y$ be a Hilbert space, $v\in Y$ and $F\in C^{1}(Y,Y)$. Suppose that:
\begin{itemize}
		\item[(A1)] $\|F(v)\|<\delta$ for some fixed $\delta>0;$
		\item[(A2)] The derivative operator $F':Y\rightarrow Y$ is invertible and $\|F'(v)^{-1}\|\leq c,$ for some $c>0;$
		\item[(A3)] Define $U=\{z\in Y:\|z\|\leq 2 c \, \delta\}$ and assume that:
\[\|F'(v+z)-F'(v)\|<\frac{1}{2 c},\forall z\in U.\]
\end{itemize}
Then there exists a unique $z\in U$ such that $F(v+z)=0.$ Moreover, $\|F'(v+z)^{-1}\| \leq 2c$.
\end{proposition}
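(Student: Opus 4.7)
The plan is a quantitative Banach fixed-point argument applied to the Newton-type map
\[ T(z) = z - F'(v)^{-1} F(v+z). \]
Since $F'(v)^{-1}$ is a linear isomorphism by (A2), the equation $T(z)=z$ is equivalent to $F(v+z)=0$, so it suffices to produce a unique fixed point of $T$ in the closed ball $U$.

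As a preliminary step I would establish the concluding estimate $\|F'(v+z)^{-1}\|\le 2c$ for every $z\in U$, which is also the crucial quantitative ingredient. From the factorization
\[ F'(v+z) = F'(v)\bigl(I + F'(v)^{-1}(F'(v+z)-F'(v))\bigr), \]
and the bounds (A2)--(A3), the operator in parentheses has norm at most $c\cdot\tfrac{1}{2c}=\tfrac12<1$, hence is invertible via Neumann series, and the bound $\|F'(v+z)^{-1}\|\le \tfrac{c}{1-1/2}=2c$ follows.

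Next I would show that $T$ maps $U$ into itself and is a $\tfrac12$-contraction. Using the fundamental theorem of calculus together with the identity $F'(v)^{-1}F'(v)z=z$,
\[ T(z) = -F'(v)^{-1}\Bigl[F(v) + \int_0^1 \bigl(F'(v+sz)-F'(v)\bigr)z\,ds\Bigr], \]
so (A1)--(A3) yield $\|T(z)\|\le c\delta + c\cdot\tfrac{1}{2c}\|z\|\le c\delta+c\delta=2c\delta$, i.e.\ $T(U)\subset U$. An analogous representation
\[ T(z_1)-T(z_2) = -F'(v)^{-1}\!\!\int_0^1\! \bigl(F'(v+z_2+s(z_1-z_2))-F'(v)\bigr)(z_1-z_2)\,ds \]
combined with (A2)--(A3) gives $\|T(z_1)-T(z_2)\|\le \tfrac12\|z_1-z_2\|$. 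The Banach contraction principle on the complete metric space $U$ then produces a unique fixed point $z\in U$, which is the desired unique solution of $F(v+z)=0$ in $U$.

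The argument is standard and there is no genuinely hard step; the only delicate point is bookkeeping with the constants. Assumption (A3) must be used as stated (pointwise bound for every $z\in U$, with the radius of $U$ chosen to be exactly $2c\delta$) so that the quantitative estimates in both the self-map property and the contraction estimate close up with ratio $\tfrac12$. The strictness in (A1) and (A3) makes the inclusion $T(U)\subset U$ and the contraction bound strict on $U$, so the application of the fixed-point theorem on the closed ball $U$ is unproblematic.
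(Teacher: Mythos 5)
Your proposal is correct and follows essentially the same route as the paper: the Newton-type map $T(z)=z-F'(v)^{-1}F(v+z)$ on the closed ball $U$, the self-map bound $\|T(z)\|\le c\delta+\tfrac12\|z\|\le 2c\delta$, the $\tfrac12$-contraction, and Banach's fixed point theorem. The only cosmetic differences are that the paper obtains the contraction from the derivative bound $\|T'(w)\|\le\tfrac12$ rather than your integral representations, and your explicit Neumann-series argument for $\|F'(v+z)^{-1}\|\le 2c$ spells out a step the paper only asserts from (A2)--(A3).
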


In order to apply the previous proposition to our purpose we will need the following lemma, whose proof again is postponed to the Appendix.
\begin{lemma} \label{Le202} For any $u\in H_{0}^{1}(\mathbb{S}^{d}(k)\setminus B_{1}),0<k< \frac{\pi^{2}}{2}$, we have
\[\|u\|_{L^{s}}\leq C\|u\|_{H^{1}},\]
where $2<s\leq2^{\ast}=\frac{2d}{d-2}$ if $d\geq3,$ $s>2$ if $d=2$, and $C=C(d,s)>0$ is a constant independent of $k$.
\end{lemma}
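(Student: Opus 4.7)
The plan is to reduce the Sobolev embedding on $\Sk\setminus B_1$, whose volume grows without bound as $k\to 0$, to a collection of uniform local estimates on small geodesic balls of $\Sk$, patched together by a bounded-overlap argument. Since $u$ vanishes on $\partial B_1$, I first extend $u$ by zero to all of $\Sk$, producing a function in $H^{1}(\Sk)$ with identical $L^{s}$ and $H^{1}$ norms. Hence it suffices to prove $\|u\|_{L^{s}(\Sk)}\leq C\|u\|_{H^{1}(\Sk)}$ with $C$ independent of $k$.

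For the local estimate, let $B^{\Sk}(p,\rho)$ denote the geodesic ball in $\Sk$ of radius $\rho$ centered at $p$. I fix $\rho=1/\pi$; since $k<\pi^{2}/2$ one has $k\rho\leq\pi/2$, so $B^{\Sk}(p,\rho)$ stays well inside the injectivity radius $\pi/k$. In normal coordinates at $p$ the metric reads $g_{k}=dr^{2}+S_{k}(r)^{2}\,d\theta^{2}$, and the ratio $S_{k}(r)/r=\sin(kr)/(kr)$ lies in $[2/\pi,1]$ for $r\in(0,\rho]$. Thus, when $B^{\Sk}(p,\rho)$ is identified with $B(0,\rho)\subset\R^{d}$ via these normal coordinates, $(2/\pi)^{2}g_{\mathrm{euc}}\leq g_{k}\leq g_{\mathrm{euc}}$, so the $L^{s}$ and $H^{1}$ norms in the two metrics are equivalent up to constants depending only on $d$. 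The classical Euclidean Sobolev inequality on $B(0,\rho)$ then yields
\[\|u\|_{L^{s}(B^{\Sk}(p,\rho))}\leq C_{0}\,\|u\|_{H^{1}(B^{\Sk}(p,\rho))},\]
with $C_{0}=C_{0}(d,s)$ independent of $k\in(0,\pi^{2}/2)$ and $p\in\Sk$.

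For the global assembly, I pick a maximal $\rho/2$-separated subset $\{p_{i}\}_{i\in I}\subset\Sk$, so that $\{B^{\Sk}(p_{i},\rho/2)\}_{i}$ are pairwise disjoint and $\{B^{\Sk}(p_{i},\rho)\}_{i}$ covers $\Sk$. A direct volume comparison (the volume function $\omega_{d-1}\int_{0}^{r}S_{k}(s)^{d-1}ds$ is comparable to $r^{d}$ uniformly for $r\leq 3\rho/2$ and $k<\pi^{2}/2$) bounds the overlap multiplicity of this covering by a constant $N_{0}=N_{0}(d)$. Summing the local estimate and invoking the embedding $\ell^{2}\hookrightarrow\ell^{s}$ (valid because $s\geq 2$),
\[\|u\|_{L^{s}(\Sk)}^{s}\leq\sum_{i}\|u\|_{L^{s}(B^{\Sk}(p_{i},\rho))}^{s}\leq C_{0}^{s}\sum_{i}\|u\|_{H^{1}(B^{\Sk}(p_{i},\rho))}^{s}\leq C_{0}^{s}\Bigl(\sum_{i}\|u\|_{H^{1}(B^{\Sk}(p_{i},\rho))}^{2}\Bigr)^{s/2}\leq C_{0}^{s}N_{0}^{s/2}\|u\|_{H^{1}(\Sk)}^{s},\]
which yields $\|u\|_{L^{s}(\Sk)}\leq C_{0}N_{0}^{1/2}\|u\|_{H^{1}(\Sk)}$ with a constant depending only on $d$ and $s$.

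The main obstacle is producing the uniform local Sobolev estimate: once the metric comparison $g_{k}\sim g_{\mathrm{euc}}$ on balls of the fixed small radius $\rho$ is available (which is precisely where the hypothesis $k<\pi^{2}/2$ enters, to guarantee $k\rho\leq\pi/2$), the covering, the bounded-overlap argument via volume comparison, and the final $\ell^{2}\hookrightarrow\ell^{s}$ assembly are all standard.
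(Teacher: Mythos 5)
Your argument is correct, but it takes a genuinely different route from the paper. The paper extends $u$ by zero to all of $\Sk$ (as you do) and then performs a single global dilation $\hat u(y)=\bar u(y/k)$ onto the unit sphere, invoking the known Sobolev embeddings there and tracking the powers of $R=1/k$ produced by the rescaling: for $d\ge 3$ the critical exponent $2^{\ast}$ makes the gradient term scale-invariant and the zero-order term scale favorably (this is exactly where the upper bound on $k$ enters), and subcritical $s$ is then handled by interpolation; the case $d=2$ requires a separate trick, applying a $W^{1,t}\hookrightarrow L^{s}$ embedding to $u^{2}$. Your localization approach instead derives a uniform local Sobolev inequality from the two-sided metric comparison $(2/\pi)^{2}g_{\mathrm{euc}}\le g_{k}\le g_{\mathrm{euc}}$ on geodesic balls of a fixed radius, and assembles it by a bounded-overlap covering together with $\ell^{2}\hookrightarrow\ell^{s}$ (valid since $s>2$). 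What your route buys is uniformity across dimensions --- no separate $d=2$ argument and no scaling bookkeeping --- and it is the standard ``bounded geometry implies uniform Sobolev constant'' argument, hence more robust; what the paper's route buys is brevity, since the unit-sphere embeddings are quoted and only the powers of $R$ need checking. One cosmetic point: a maximal $\rho/2$-separated set gives pairwise disjoint balls of radius $\rho/4$ (not $\rho/2$), but this does not affect your argument, since the overlap bound you actually use follows from the volume comparison you state.
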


With these two results we are now able to prove Proposition \ref{Pr21}.

\begin{proof}[Proof of Proposition \ref{Pr21}] We will apply the Proposition \ref{Pr22} in the Sobolev space
\begin{equation}\label{Y}
  Y= H_{0,r}^{1}(\mathbb{S}^{d}(k)\setminus B_{1})
\end{equation}
via the coordinates given in \eqref{coord}. We define the operator
\[
F(u)=\Phi(\tilde{F}(u)):Y\rightarrow Y,
\]
where $\Phi:Y^{*}\rightarrow Y$ is the isomorphism given by the Riesz Representation Theorem and the functional
$\tilde{F}:Y\rightarrow Y^{*}$ is defined by
\[\tilde{F}(u)w=\omega_{d-1}\int_{1}^{\frac{\pi}{k}}S^{d-1}_{k}(r)(\lambda u'w'+uw-(u^{+})^{p}w) \, dr.\]

\medskip

Observe that if $F(u)=0$, then $u$ is a solution of problem \eqref{eq13} by the maximum principle. In what follows we fix $\lambda \in [\e, 1/\e]$ for some $\varepsilon \in (0,1)$ and define $v:=v_{k,\lambda}=\tilde{u}_\lambda\chi_{k}\geq 0 ,$ where $\tilde{u}_\lambda$ is the positive solution of \eqref{preradial} and $0 \leq \chi_k \leq 1$ is a cut-off function such that:
$$ \left \{ \begin{array}{ll} \chi_{k}(r)=1 &  r \in \left(1,\frac{\pi}{\sqrt{k}}\right), \\ \chi_{k}(r)=0 &  r \in \left(\frac{2\pi}{\sqrt{k}},\frac{\pi}{k}\right), \\ |\chi_{k}'(r)| \leq  k^{-1/2} &  r \in \left(1,\frac{\pi}{k}\right). \end{array} \right. $$
This function will play the role of $v$ in Proposition \ref{Pr22}. Now, let us verify that the assumptions of Proposition \ref{Pr22} are satisfied. For that, the exponential decay of $\tilde{u}_\lambda$ will be essential. First, it is clear that $\tilde{F}$ is a $C^1$ map.

%

\medskip

\textbf{(A1)} Let's prove that $\|\tilde{F}(v)\| = o_{k}(1).$ We compute
 \begin{align*}
 | \frac{\tilde{F}(v)w}{\omega_{d-1}}| &=\left|\int_{1}^{\frac{\pi}{k}}S^{d-1}_{k}(r)(\lambda v'w'+vw-v^{p}w)dr\right|\\
     &=\left|\int_{1}^{\frac{\pi}{k}}S^{d-1}_{k}(r)\left[\lambda (\tilde{u}_\lambda\chi_{k})'w'+(\tilde{u}_\lambda\chi_{k}w-\tilde{u}_\lambda^{p}\chi_{k}^{p}w+\tilde{u}_\lambda^{p}\chi_{k}w-\tilde{u}_\lambda^{p}\chi_{k}w)\right]dr\right|\\
    &= \Bigg| \int_{1}^{\frac{\pi}{k}}S^{d-1}_{k}(r)\Bigg[\lambda (\tilde{u}_\lambda\chi_{k}'w'- \tilde{u}_\lambda'\chi_{k}'w)+\tilde{u}_\lambda^{p}(\chi_{k} -\chi_{k}^{p})w\,  \\
 &\quad \qquad +(d-1)\lambda \tilde{u}_\lambda'\chi_{k}w\left(\frac{1}{r}-\frac{C_{k}(r)}{S_{k}(r)}\right)\Bigg]\, dr \,\Bigg|\\
    &\leq \left|\int_{1}^{\frac{\pi}{\sqrt{k}}}(d-1)\lambda \tilde{u}_\lambda'wS^{d-1}_{k}(r)\left(\frac{1}{r}-\frac{C_{k}(r)}{S_{k}(r)}\right)\, dr\, \right| \\
    &\quad\qquad +\Bigg|\int_{\frac{\pi}{\sqrt{k}}}^{\frac{\pi}{k}}S^{d-1}_{k}(r)\Bigg[\lambda (\tilde{u}_\lambda\chi_{k}'w'- \tilde{u}_\lambda'\chi_{k}'w)+\tilde{u}_\lambda^{p}(\chi_{k}
     -\chi_{k}^{p})w\\ &\quad \qquad \qquad  +(d-1)\lambda \tilde{u}_\lambda'\chi_{k}w\left(\frac{1}{r}-\frac{C_{k}(r)}{S_{k}(r)}\right)\Bigg]\, dr \, \Bigg|\\
    &\leq c_{1}\max\{A,B\}\, \|w\|_{k}+c_{2}\frac{1}{\varepsilon}\left(\frac{ e^{-\frac{\pi}{\sqrt{k\lambda}}}}{k^{\frac{d-1}{4}}}+
    \frac{e^{-\frac{p\pi}{\sqrt{k\lambda}}}}{k^{\frac{d-1}{2}}}
    +\frac{ e^{-\frac{\pi}{\sqrt{k\lambda}}}}{k^{\frac{d-3}{2}}}\right)\|w\|_{k}\\
     &\leq c_{1}\max\{A,B\}\, \|w\|_{k}+c_{2}\frac{1}{\varepsilon}\left(\frac{ e^{-\frac{\sqrt{\varepsilon}\pi}{\sqrt{k}}}}{k^{\frac{d-1}{4}}}+
    \frac{e^{-\frac{p\sqrt{\varepsilon}\pi}{\sqrt{k}}}}{k^{\frac{d-1}{2}}}
    +\frac{ e^{-\frac{\sqrt{\varepsilon}\pi}{\sqrt{k}}}}{k^{\frac{d-3}{2}}}\right)\|w\|_{k},,
\end{align*}
where $A=\left|\frac{k}{\tan(k)}-1\right|,B=\left|1-\frac{\sqrt{k}\pi}{\tan(\sqrt{k}\pi)}\right|.$
\medskip

\textbf{(A2)} Let's prove that $\tilde{F}'$ is invertible and $\|\tilde{F}'(v)^{-1}\|\leq c,c>0.$  By contradiction, we suppose that either $\tilde{F}'(v)$ is not invertible or $\|\tilde{F}'(v)^{-1}\|$ is not bounded. We first assume that $\tilde{F}'(v)$ is not invertible, then we can have that $\tilde{F}'(v)$ is not injective (see \cite[Lemma 4.1]{AEKS14}).
Suppose that $\tilde{F}'(v)$ is injective, let us write $\tilde{F}'(v)(\psi_{1},\psi_{2})=(\mathcal{A}\psi_{1},\psi_{2})$ for all $\psi_{1},\psi_{2}\in Y$ that is given in (\ref{Y}), where the operator
\begin{align*}
  \mathcal{A}:Y &\rightarrow Y^{*} \\
  \psi&\mapsto-\lambda\Delta\psi+\psi-pv^{p-1}\psi.
\end{align*}
Notice that the operator $\mathcal{A}_{0}:\psi\rightarrow-\lambda\Delta\psi+\psi$ is an isomorphism from $Y$ to $Y^{*}.$ And the operator $K:\psi\rightarrow pv^{p-1}\psi$ is compact from $Y$ to $Y^{*}$ since $v:=v_{k,\lambda}=\tilde{u}_\lambda\chi_{k}$ and $\tilde{u}_\lambda$ converges to $0$ at infinity exponentially. Since $\mathcal{A}=\mathcal{A}_{0}(I-\mathcal{A}^{-1}_{0}K)$ is injective and $\mathcal{A}^{-1}_{0}K$ is compact, the operator $\mathcal{A}$ is invertible by the Fredholm alternative for $I-\mathcal{A}^{-1}_{0}K$. This is a contradiction. Then there exists a non-zero function $\varphi_{k}\in Y$ such that
$$\tilde{F}'(v)\varphi_{k}=0.$$
Now, in the case $\|\tilde{F}'(v)^{-1}\|$ is unbounded, we assume that $\|\tilde{F}'(v)^{-1}\|\rightarrow\infty$ as $k \to 0$. Taking $\phi_k\in Y^{*}$ so that $\tilde{F}'(v)^{-1}\phi_k=\xi_{k}$ is a divergence sequence of $Y.$ Let $\varphi_{k}=\frac{\xi_{k}}{\|\xi_{k}\|_{k}},$ then $\|\varphi_{k}\|_{k}=1$ and define $\tilde{F}'(v)^{-1}\frac{\phi_k}{\|\xi_{k}\|_{k}}=\varphi_{k},$ one has that
$$\|\tilde{F}'(v)\varphi_{k}\|=\frac{\|\phi_k\|}{\|\xi_{k}\|_k}\rightarrow 0.$$
Therefore, it suffices to consider
$$\tilde{F}'(v)\varphi_{k}\rightarrow 0,$$
with $\|\varphi_{k}\|_k=1.$ First, we verify that $\varphi_{k}\rightharpoonup\varphi_{0}$ in $H_{r,loc}^{1}(\mathbb{R}^{d}\setminus B_{1})$ and $\varphi_{0}\in H_{0,r}^{1}(\mathbb{R}^{d}\setminus B_{1})$.

In fact, for $k = k(i,j)$ sufficiently small and a some fixed constant $M>1$, we use the Cantor's diagonal argument
\begin{equation*}
  \begin{matrix}
  \varphi_{k(1,1)}&\varphi_{k(1,2)}&\cdots&\varphi_{k(1,n)}\rightharpoonup\varphi_{0}^1 &\mbox{in $H^1_r(B_{M}\setminus B_{1})$ }\\
  \varphi_{k(2,1)}&\varphi_{k(2,2)}&\cdots&\varphi_{k(2,n)}\rightharpoonup\varphi_{0}^2 &\mbox{in $H^1_r(B_{2M}\setminus B_{1})$ }\\
  \vdots&\vdots&\cdots&\vdots&\vdots\\
  \varphi_{k(n,1)}&\varphi_{k(n,2)}&\cdots&\varphi_{k(n,n)}\rightharpoonup\varphi_{0}^n &\mbox{in $H^1_r(B_{nM}\setminus B_{1})$ }\\
   \vdots&\vdots&\cdots&\vdots&\vdots
  \end{matrix}
\end{equation*}
We consider the diagonal subsequence $\varphi_{k}=\varphi_{k(n,n)}$. Fix $m\in\mathbb{N},$ we can check that $(\varphi_{k(n,n)})_{n\geq m}\subset(\varphi_{k(m,n)})_{n\geq1}$ with $\varphi_{k(m,n)}\rightharpoonup\varphi_{0}^m$ in $H^1_r(B_{mM}\setminus B_{1})$ since each row is a subsequence of all previous rows. Then, by the uniqueness of limits, the function $\varphi_{0}=\varphi_{0}^m$ is well-defined. Thus we have $\varphi_{k}\rightharpoonup\varphi_{0}$ in $H_{r,loc}^{1}(\mathbb{R}^{d}\setminus B_{1}).$ And we know that
\begin{align*}
  \int_{1}^{\infty}r^{d-1}(\varphi'^{2}_{0}+\varphi^{2}_{0})dr&=\mathop {\lim}\limits_{M\rightarrow \infty}\int_{1}^{M}r^{d-1}(\varphi'^{2}_{0}+\varphi^{2}_{0})dr\\
     &\leq(1+\epsilon)^{d-1}\mathop {\lim}\limits_{M\rightarrow \infty}\mathop {\liminf}\limits_{k\rightarrow 0}\int_{1}^{M}\left(\frac{r}{1+\epsilon}\right)^{d-1}(\varphi'^{2}_{k}+\varphi^{2}_{k})dr\\
     &\leq(1+\epsilon)^{d-1}\mathop {\lim}\limits_{M\rightarrow \infty}\mathop {\liminf}\limits_{k\rightarrow 0}\int_{1}^{M}S^{d-1}_{k}(r)(\varphi'^{2}_{k}+\varphi^{2}_{k})dr\\
     &\leq(1+\epsilon)^{d-1}\mathop {\lim}\limits_{M\rightarrow \infty}\mathop {\liminf}\limits_{k\rightarrow 0}\int_{1}^{\frac{\pi}{k}}S^{d-1}_{k}(r)(\varphi'^{2}_{k}+\varphi^{2}_{k})dr\\
     &=(1+\epsilon)^{d-1},
\end{align*}
with small $\epsilon.$ This shows that $\varphi_{0}\in H_{0,r}^{1}(\mathbb{R}^{d}\setminus B_{1}).$

Now we prove that $\varphi_{0}$ differs from $0$ by contradiction. If $\varphi_{0}=0$,
 \begin{align*}
  \frac{\tilde{F}'(v)(\varphi_{k},\varphi_{k})}{\omega_{d-1}} &=\int_{1}^{\frac{\pi}{k}}S^{d-1}_{k}(r)(\lambda \varphi_{k}'^{2}+ \varphi_{k}^{2}-pv^{p-1}\varphi_{k}^{2})dr\\
     &\geq \min\{\lambda,1\}\|\varphi_{k}\|_k^{2}-p\int_{1}^{\frac{\pi}{k}}S^{d-1}_{k}(r)v^{p-1}\varphi_{k}^{2}dr\\
     &=\min\{\lambda,1\}-p\int_{1}^{\frac{\pi}{k}}S^{d-1}_{k}(r)v^{p-1}\varphi_{k}^{2}dr.
\end{align*}
Then we can get a contradiction that the left-hand side converges to $0$ but the right-hand side is greater and equal to $\min\{\lambda,1\}>0.$ The latter result comes from the fact that
\begin{align*}
   &\int_{1}^{\frac{\pi}{k}}S^{d-1}_{k}(r)v^{p-1}\varphi_{k}^{2}\\
   &=\int_{1}^{M}S^{d-1}_{k}(r)\tilde{u}_\lambda^{p-1}\varphi_{k}^{2}dr+\int_{M}^{\frac{\pi}{k}}S^{d-1}_{k}(r)\tilde{u}_\lambda^{p-1}\chi_{k}^{p-1}\varphi_{k}^{2}dr\\
   &=\int_{1}^{M}r^{d-1}\tilde{u}_\lambda^{p-1}\varphi_{0}^{2}dr+o_k(1)+\int_{M}^{\frac{\pi}{k}}S^{d-1}_{k}(r)\tilde{u}_\lambda^{p-1}\chi_{k}^{p-1}\varphi_{k}^{2}dr\\
   &\leq \int_{1}^{M}r^{d-1}\tilde{u}_\lambda^{p-1}\varphi_{0}^{2}dr+c|M|^{\frac{d-1}{2}}e^{\frac{-(p-1)M}{\sqrt{\lambda}}}\|\varphi_{k}\|_k^{2} + o_k(1),
\end{align*}
which can be taken arbitrarily small by choosing $M$ and $k$ appropriately.

Finally, for all $w\in C_{0}^{\infty}(B_{M}\setminus B_{1}),$
\begin{align*}
\frac{\tilde{F}'(v)(\varphi_{k},w)}{\omega_{d-1}}&=\int_{1}^{\frac{\pi}{k}}S^{d-1}_{k}(r)(\lambda \varphi_{k}'w'+ \varphi_{k}w-pv^{p-1}\varphi_{k}w)dr\\
&=\int_{1}^{M}S^{d-1}_{k}(r)(\lambda \varphi_{k}'w'+ \varphi_{k}w-pv^{p-1}\varphi_{k}w)dr\\
&\rightarrow \int_{1}^{M}r^{d-1}(\lambda \varphi_{0}'w'+ \varphi_{0}w-p\tilde{u}_\lambda^{p-1}\varphi_{0}w)dr\\
&= \int_{1}^{\infty}r^{d-1}(\lambda \varphi_{0}'w'+ \varphi_{0}w-p\tilde{u}_\lambda^{p-1}\varphi_{0}w)dr=0.
\end{align*}
As a consequence, $\varphi_{0} \neq 0$ solves the linearized problem
\begin{equation*}
\begin{cases}
-\lambda \varphi_{0}''-(d-1)\frac{\lambda}{r}\varphi_{0}'+\varphi_{0}-p\tilde{u}_\lambda^{p-1}\varphi_{0}=0 &\mbox{in $(1,\infty)$,}\\
\varphi_0(1)=0.
\end{cases}
\end{equation*}
But this is a contradiction with Proposition \ref{list}, c).

\medskip

\textbf{(A3)} Let's show that $\|\tilde{F}'(v)-F'(s)\|<\frac{1}{2c}$ for any $ s \in Y$ with $\|v-s\|\leq 2 c \,\delta.$ For any $\phi\in Y$, we have that
 \begin{align*}
 \left|\frac{\tilde{F}'(v)(\phi)w-\tilde{F}'(s)(\phi)w}{\omega_{d-1}}\right|&=p\left|\int_{1}^{\frac{\pi}{k}}S^{d-1}_{k}(r)
 (v^{p-1}-(s^{+})^{p-1})\phi wdr\right|\\
 &\leq p\int_{1}^{\frac{\pi}{k}}S^{d-1}_{k}(r)|\phi w|(\epsilon |s^{+}|^{p-1}+C_{\epsilon}|v-s^{+}|^{p-1})\\
&\leq C\|\phi\|_k\|w\|_k\|v-s^{+}\|_k^{p-1}\\
&\leq C(2c\delta)^{p-1}\|\phi\|_k\|w\|_k\\
&<\frac{1}{2c}\|\phi\|_k\|w\|_k,
\end{align*}
where $\epsilon$ is arbitrary and $C_{\epsilon}$ is a constant. This concludes the proof of the proposition.
\end{proof}

In next result we gather some properties of the boundedness and decay of the solution $u_{k, \lambda}$ that will be useful later.

\begin{lemma} \label{ultimo} Given $\e>0$, consider $k_0>0$, $k$ and $\lambda$ as in Proposition \ref{Pr21}. Then the following properties hold:
	\begin{enumerate}
		\item There exists $M>0$ independent of $k$, $\lambda$ such that $\| u_{k, \lambda} \|_{L^{\infty}} \leq M$.
		\item For any $\delta >0 $ there exists $R >0$ independent of $k$, $\lambda$ such that $u_{k, \lambda}(r) < \delta$ for any $k < \pi/R$, $r \in (R, \pi/k)$.
\end{enumerate}	\end{lemma}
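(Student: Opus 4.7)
I would prove both items by contradiction, combining rescaling arguments with the $H^1$-convergence $u_{k,\lambda}\to\tilde u_\lambda$ from Proposition~\ref{Pr21}.

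For item~(1), suppose there exist sequences $k_n\to 0$ and $\lambda_n\in[\varepsilon,1/\varepsilon]$ with $M_n:=\|u_{k_n,\lambda_n}\|_{L^\infty}\to\infty$, and let $x_n\in\Sk\setminus B_1$ be a maximum point. The plan is to perform the Gidas--Spruck-type rescaling
\[
v_n(y)\;=\;M_n^{-1}\,u_{k_n,\lambda_n}\!\bigl(\mbox{exp}_{x_n}\bigl(M_n^{-(p-1)/2}\sqrt{\lambda_n}\,y\bigr)\bigr),
\]
written in a geodesic normal chart around $x_n$. Using that the rescaled sectional curvature $k_n\,M_n^{-(p-1)/2}\to 0$, one checks that $v_n$ solves an equation of the form $-\Delta v_n-v_n^p+M_n^{1-p}v_n+R_n(v_n)=0$, in which the metric-correction remainder $R_n$ vanishes uniformly on compact sets. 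Since $0\le v_n\le 1$ with $v_n(0)=1$, interior elliptic regularity yields uniform $C^{1,\alpha}_{\mathrm{loc}}$ bounds, and a subsequential $C^1_{\mathrm{loc}}$ limit $v_\infty$ satisfies $-\Delta v_\infty=v_\infty^p$ either on all of $\mathbb{R}^d$ or on a half-space with vanishing Dirichlet trace on the boundary, according to whether $\operatorname{dist}(x_n,\partial B_1)\,M_n^{(p-1)/2}$ diverges or stays bounded. Both possibilities are excluded by the subcriticality $p<\frac{d+2}{d-2}$ together with the Liouville theorems of Gidas--Spruck (on $\mathbb{R}^d$) and Chen--Lin (on the half-space), producing the contradiction.

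For item~(2), with the $L^\infty$-bound in hand, I would again argue by contradiction: if the decay failed, one would find $\delta>0$ and sequences $k_n\to 0$, $\lambda_n\in[\varepsilon,1/\varepsilon]$, $r_n\in(n,\pi/k_n)$ with $u_n(r_n)\ge\delta$, where $u_n:=u_{k_n,\lambda_n}$. Item~(1) and Schauder estimates give uniform $C^{1,\alpha}$ bounds for $\{u_n\}$, so $u_n\ge\delta/2$ on an interval $(r_n-\rho,r_n+\rho)$ with $\rho>0$ independent of $n$. I would then split according to whether $d_n:=\pi/k_n-r_n$ stays bounded. In the easier case $d_n\to\infty$, the elementary inequality $\sin x\ge(2/\pi)\min(x,\pi-x)$ on $[0,\pi]$ forces $S_{k_n}(r_n)\ge(2/\pi)\min(r_n,d_n)\to\infty$, and hence
\[
\int_{r_n-\rho}^{r_n+\rho}S_{k_n}^{d-1}(r)\,u_n^2(r)\,dr\;\longrightarrow\;\infty,
\]
contradicting the uniform $H^1$-estimate $\|u_n\|_{k_n}\le C$ supplied by Proposition~\ref{Pr21}.

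The main obstacle is the complementary case $d_n\to t_*\in[0,\infty)$, where $r_n$ accumulates near the north pole. Here I would switch to the variable $t=\pi/k_n-r$ and set $v_n(t)=u_n(\pi/k_n-t)$; working in a smooth normal chart around the north pole, where the PDE becomes non-singular, elliptic regularity yields uniform $C^{1,\alpha}$ estimates on every compact subset of $[0,\infty)$, so up to a subsequence $v_n\to v_\infty$ uniformly on compacta, with $v_\infty(t_*)\ge\delta$. To rule this out, I would combine $\|u_n-\tilde u_{\lambda_n}\|_{k_n}\to 0$ with the exponential decay of $\tilde u_{\lambda_n}$ at infinity (Proposition~\ref{list}(b)) to deduce that, for every fixed $T>0$,
\[
\int_{\pi/k_n-T}^{\pi/k_n}S_{k_n}^{d-1}(r)\,u_n^2(r)\,dr\;\longrightarrow\;0.
\]
Translating this to the $t$-variable and using $S_{k_n}(\pi/k_n-t)=\sin(k_n t)/k_n\ge 2t/\pi$ on $[0,T]$ for $n$ large, the uniform convergence would give $\int_0^{T}t^{d-1}v_\infty^2(t)\,dt=0$ for every $T$, so $v_\infty\equiv 0$, contradicting $v_\infty(t_*)\ge\delta>0$.
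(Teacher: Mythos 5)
Your item~(1) is the same approach the paper takes: it simply invokes the Gidas--Spruck blow-up analysis together with the associated Liouville theorems (full-space and half-space), noting the essential role of $p<\tfrac{d+2}{d-2}$; you have spelled out the rescaling details, but the route is identical. Your item~(2), on the other hand, is a genuinely different argument. The paper exploits $S_k(r)\ge(2/\pi)\,r$ on $(1,\pi/(2k)]$ to transfer the uniform bound $\|u_{k,\lambda}\|_k\le C$ to the Euclidean radial weight and then applies the Radial Lemma of Strauss to obtain decay on the southern hemisphere; on the northern hemisphere it argues by contradiction, producing an interior local maximum exceeding $1$, multiplying the equation by $(u_{k,\lambda}-1)^+$, and combining H\"older's inequality with the $k$-uniform Sobolev inequality of Lemma~\ref{Le202} to get a lower bound on the measure of $\{u_{k,\lambda}>1\}$ that contradicts \eqref{limitu} together with the exponential decay of $\tilde u_\lambda$. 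You instead derive uniform interior $C^{1,\alpha}$ bounds from item~(1) and Schauder estimates, run a dichotomy on $d_n=\pi/k_n-r_n$, and handle the bounded case with a compactness/blow-up argument near the north pole; this avoids both the Radial Lemma and Lemma~\ref{Le202} at the price of an extra blow-up. Both routes are correct (in each case the uniformity of $\|u_{k,\lambda}-\tilde u_\lambda\|_k\to 0$ over $\lambda\in[\varepsilon,1/\varepsilon]$, coming from the quantitative IFT in Proposition~\ref{Pr21}, is used implicitly); yours is a bit longer but self-contained and stylistically parallel to item~(1), while the paper's is more economical once Lemma~\ref{Le202} is in hand.
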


\begin{proof}
	
The proof of 1) follows immediately from the classical blow-up analysis of Gidas and Spruck, together with their Liouville theorem, see \cite{gs1, gs2}. Observe that here we use in an essential way that $1<p< \frac{d+2}{d-2}$ (if $d \geq 3$).

We now turn our attention to 2). Observe that by Proposition \ref{Pr21}, $\| u_{k, \lambda} \|_k \leq C$. In particular, taking into account that $S_k(r) \leq 2r /\pi $ for all $r \in (1, \frac{\pi}{2k}]$,

$$ \int_1^{\frac{\pi}{2k}} r^{d-1} (u_{k, \lambda}'(r)^2 + u_{k, \lambda}(r)^2) \, dr \leq C.$$

Then, the Radial Lemma of Strauss \cite{strauss} gives us the desired decay for any $r \in (1, \frac{\pi}{2k}]$.

We now consider the estimate in the north hemisphere $\mathbb{S}_+^{d}(k)$. If it does not hold, then $u_{k,\lambda}$ attains a local maximum in such hemisphere for a sequence $k=k_n \to 0$. By the maximum principle, this local maximum is strictly bigger than 1.

We now multiply \eqref{eq13} by $v= (u_{k, \lambda}-1)^+$ and integrate in $\mathbb{S}_+^{d}(k)$, to obtain that:

$$ \lambda \int_{\mathbb{S}_+^{d}(k)}  |\nabla v|^2 =  \int_{\mathbb{S}_+^{d}(k)} \frac{u_{k, \lambda}^p - u_{k, \lambda}}{u_{k, \lambda}-1} v^2 \leq C \int_{\mathbb{S}_+^{d}(k)} v^2,$$
by 1). We now define $\Sigma=\{x \in \mathbb{S}_+^{d}(k): \ u_{k, \lambda}(x) >1 \}$. Therefore, we can use H\"{o}lder inequality for any $2< q \le\frac{2d}{d-2}$ (any $q >2$ if $d=2$), and Lemma \ref{Le202}, to obtain:

\begin{align*} & \lambda \int_{\mathbb{S}_+^{d}(k)}  (|\nabla v|^2 +v^2) \leq (C+ \lambda) \int_{\mathbb{S}_+^{d}(k)} v^2 \\ &\leq (C+ \lambda) \Big( \int_{\mathbb{S}_+^{d}(k)} v^q \Big)^{2/q} |\Sigma| ^{\frac{q-2}{q}} \leq C \Big( \int_{\mathbb{S}_+^{d}(k)}  (|\nabla v|^2 +v^2) \Big)|\Sigma| ^{\frac{q-2}{q}}. \end{align*}

 This implies that the measure of $\Sigma$ is uniformly bounded from below. Observe moreover that:

$$ \int_{\mathbb{S}_+^{d}(k)} u_{k, \lambda}(x)^2 \, dx \geq |\Sigma|,$$
which is bounded from below. But this is in contradiction with \eqref{limitu} and the exponential decay of $\tilde{u}_{\lambda}$.

\end{proof}

\section{The Dirichlet-to-Neumann operator and its linearization}
\label{Section 4}

In this section we build the Dirichlet-to-Neumann operator to which we intend to apply a local bifurcation argument. In order to do this, some definitions are in order. First, let us fix a symmetry group $G$ satisfying assumption (G).

For any solution given in Proposition \ref{Pr21}, we define the linearized operator of the Dirichlet problem: $L^D=L_{k, \lambda}^D:H^{1}_{0,G}(\mathbb{S}^{d}(k)\setminus B_{1})\rightarrow H^{*}_G(\mathbb{S}^{d}(k)\setminus B_{1})$ associated to (\ref{eq13}) by
\begin{equation}\label{eq220}
	L^D(\phi)=-\lambda\Delta\phi+\phi-pu_{k, \lambda}^{p-1}\phi,
\end{equation}
where $H^{*}_G(\mathbb{S}^{d}(k)\setminus B_{1})$ is the dual of $H^{1}_{0,G}(\mathbb{S}^{d}(k)\setminus B_{1}).$

\medskip

\begin{proposition} \label{index} The operator $L^D$ has a negative eigenvalue $\tau=\tau_{k, \lambda}$ with a radially symmetric eigenfunction $z= z_{k, \lambda}$. \end{proposition}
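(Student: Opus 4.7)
The approach is to restrict $L^D$ to the subspace $H^1_{0,r}(\Sk\setminus B_1)$ of functions depending only on the radial variable $r$, and to show by a test function argument that the Rayleigh quotient takes negative values there for $k$ sufficiently small. Since $u_{k,\lambda}$ is axially symmetric, $L^D$ preserves radial functions, and every radial function is $G$-symmetric for any admissible $G$; hence a radial eigenfunction of the restricted operator is automatically an eigenfunction of $L^D:H^1_{0,G}\to H^*_G$. By the compact embedding $H^1_{0,r}(\Sk\setminus B_1)\hookrightarrow L^2$ (which follows from the one-dimensional Sobolev embedding applied to the radial profile), the infimum of the Rayleigh quotient is attained, so negativity of the infimum immediately yields a radial eigenfunction $z=z_{k,\lambda}$ with negative eigenvalue $\tau=\tau_{k,\lambda}$.

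The test function of choice is $\phi_k(r)=\tilde z_\lambda(r)\chi_k(r)$, where $\tilde z_\lambda$ is the positive eigenfunction of the Euclidean linearized operator from Proposition \ref{list}(c) with negative eigenvalue $\tilde\tau_\lambda$, and $\chi_k$ is the cut-off already introduced in Section \ref{Section 3}. A preliminary fact needed here is that $\tilde z_\lambda$ decays exponentially: rewriting (\ref{z}) as $-\lambda\Delta\tilde z_\lambda+(1-\tilde\tau_\lambda)\tilde z_\lambda=p\,\tilde u_\lambda^{p-1}\tilde z_\lambda$, one has $1-\tilde\tau_\lambda>1$, while the right-hand side decays exponentially (because $\tilde u_\lambda$ does, by Proposition \ref{list}(b)), so a standard super-solution barrier yields exponential decay of $\tilde z_\lambda$. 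I then aim to show that the quadratic form associated to $L^D$, evaluated on $\phi_k$, converges as $k\to 0$ to
\[
\omega_{d-1}\int_1^\infty r^{d-1}\bigl[\lambda(\tilde z_\lambda')^2+\tilde z_\lambda^2-p\,\tilde u_\lambda^{p-1}\tilde z_\lambda^2\bigr]\,dr=\omega_{d-1}\,\tilde\tau_\lambda\int_1^\infty r^{d-1}\tilde z_\lambda^2\,dr<0,
\]
the last equality being (\ref{z}) tested against $\tilde z_\lambda$.

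The passage to the limit is handled by splitting each integral at a large radius $R$. On $[1,R]$ one has $S_k(r)=k^{-1}\sin(kr)\to r$ uniformly, while the norm convergence $\|u_{k,\lambda}-\tilde u_\lambda\|_k\to 0$ from Proposition \ref{Pr21}, combined with the one-dimensional Sobolev embedding $H^1([1,R])\hookrightarrow C^0([1,R])$ applied to the radial profiles, delivers $u_{k,\lambda}\to\tilde u_\lambda$ uniformly on $[1,R]$; the $L^\infty$-bound of Lemma \ref{ultimo}(1) then lets one pass to the limit through the factor $u_{k,\lambda}^{p-1}$. On the tail $[R,\pi/k]$, that same $L^\infty$-bound together with the exponential decay of $\tilde z_\lambda$ dominates the integrand by an exponentially decaying function of $r$ uniformly in $k$, so the tail can be made arbitrarily small by choosing $R$ large. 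Sending first $k\to 0$ and then $R\to\infty$ yields the claimed convergence, whence the quadratic form is negative on $\phi_k$ for $k$ small, which completes the argument. The main technical obstacle is precisely this convergence of the nonlinear term, for which the simultaneous use of local uniform convergence of $u_{k,\lambda}$, the uniform $L^\infty$ bound, and the exponential decay of $\tilde z_\lambda$ as a dominating weight is essential.
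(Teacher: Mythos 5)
Your proof is correct in outline, but it takes a substantially more laborious route than the paper's. The paper's proof is a one-liner: testing the Dirichlet quadratic form on $u=u_{k,\lambda}$ itself and using the equation $-\lambda\Delta u+u-u^{p}=0$ (multiplied by $u$ and integrated by parts) gives
\[
Q^{D}(u)=\int_{\mathbb{S}^{d}(k)\setminus B_{1}}\bigl(\lambda|\nabla u|^{2}+u^{2}-p\,u^{p+1}\bigr)=-(p-1)\int_{\mathbb{S}^{d}(k)\setminus B_{1}}u^{p+1}<0,
\]
so the first eigenvalue is negative \emph{for every} admissible $(k,\lambda)$, not just for $k$ small; simplicity and positivity of the first eigenfunction then force it to be radial. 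Your argument instead tests on $\tilde z_{\lambda}\chi_{k}$ and needs the exponential decay of $\tilde z_{\lambda}$ (which you must first establish), the uniform $L^{\infty}$ bound from Lemma~\ref{ultimo}, the norm convergence from Proposition~\ref{Pr21}, and a two-parameter limit $k\to0$, $R\to\infty$. All of this is sound, and your final reduction from radial to $G$-symmetric is valid (a radial eigenfunction is $G$-invariant, and pairing a radial function against the non-radial Fourier modes of any $G$-symmetric test function vanishes), but the approach buys nothing extra here: it proves the conclusion only for $k$ sufficiently small, whereas the paper's choice of test function exploits the structure of the equation to avoid any perturbation analysis at all. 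When the solution of a semilinear equation is available, testing the linearized quadratic form on the solution itself is the standard trick to exhibit Morse index at least one, and is worth keeping in mind before reaching for a delicate limiting argument.
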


The proof is rather easy and will be presented in Section 5, when we introduce quadratic forms associated to $L^D$.

\medskip We also need the following result, that establishes the nondegeneracy of the Dirichlet operator:

\begin{proposition} \label{nondeg D} There exist $0< \lambda_0 < \lambda_1$ such that, by taking smaller $k_0 >0$, if necessary, we have that for any $k \in (0, k_0)$, $\lambda \in [\lambda_0, \lambda_1]$, the eigenvalues of $L^D$ different from $\tau$ are all strictly bigger than a positive constant $\e$ independent of $\lambda$, $k$. In particular, the operator $L^D$ is an isomorphism.
	
\end{proposition}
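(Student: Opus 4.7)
The plan is to decompose $L^D$ by $G$-symmetric spherical harmonics and to reduce the assertion to uniform spectral-gap estimates on each angular mode, obtained by perturbation from the Euclidean limit $k\to 0$. Writing $\phi(r,\theta)=\sum_{l\ge 0}\phi_l(r)Y_l(\theta)$ in an $L^2$-orthonormal basis of $G$-symmetric spherical harmonics with eigenvalues $\mu_{i_l}$ (so $\mu_{i_0}=0$ and, by (G), $\mu_{i_l}\ge 2d$ for $l\ge 1$), the operator $L^D$ preserves the decomposition and acts on the $l$-th mode as
\[
L_l^D\phi_l=-\lambda\left[\partial_r^2+(d-1)\tfrac{C_k(r)}{S_k(r)}\partial_r\right]\phi_l+\left[\tfrac{\lambda\mu_{i_l}}{S_k^2(r)}+1-pu_{k,\lambda}^{p-1}\right]\phi_l,
\]
with Dirichlet condition at $r=1$. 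The spectrum of $L^D$ is therefore the union of the spectra of the $L_l^D$, so it suffices to show that: \emph{(i)} $L_0^D$ has exactly one simple negative eigenvalue $\tau$ and its second eigenvalue is bounded below by some $\varepsilon_0>0$; \emph{(ii)} for every $l\ge 1$, $L_l^D$ is positive with first eigenvalue bounded below by some $\varepsilon_1>0$; both uniformly in $k$ small and $\lambda\in[\lambda_0,\lambda_1]$.

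For \emph{(i)} I would pass to the Euclidean limit. By Proposition \ref{list}\,c) the radial Euclidean operator $L_\lambda$ on $H^1_{0,r}(\R^d\setminus B_1)$ has a unique simple negative eigenvalue $\tilde\tau_\lambda$, no zero eigenvalue, and both $\tilde\tau_\lambda$ and the second eigenvalue depend continuously on $\lambda$. The convergence $u_{k,\lambda}\to\tilde u_\lambda$ from Proposition \ref{Pr21} combined with the decay in Lemma \ref{ultimo} yields $\Gamma$-convergence of the quadratic form of $L_0^D$ to that of $L_\lambda$, and a standard min-max/perturbation argument transfers the Morse index and gap to $L_0^D$ uniformly on compact $\lambda$-intervals. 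For \emph{(ii)} the corresponding Euclidean modal operator $\tilde L_l$ (obtained by taking $C_k(r)/S_k(r)\to 1/r$, $S_k(r)\to r$, $u_{k,\lambda}\to\tilde u_\lambda$) is analysed in \cite{RRS20} and revisited in Section \ref{Section 5}: there a value $\lambda_*>0$ is identified at which the linearised Dirichlet-to-Neumann map has an odd-dimensional kernel, while each $\tilde L_l$ with $l\ge 1$ remains positive on a full neighbourhood of $\lambda_*$ (the kernel belongs to the D-to-N map, not to $L^D$). Choosing $[\lambda_0,\lambda_1]$ to be such a neighbourhood, one obtains the uniform lower bound in the Euclidean limit; for very large $l$ the bound is automatic since by Lemma \ref{ultimo} the potential $1-pu_{k,\lambda}^{p-1}$ is negative only on a fixed compact annulus near $r=1$ where $S_k(r)\asymp r$, and the angular term $\lambda\mu_{i_l}/S_k^2(r)\ge c\lambda\mu_{i_l}$ dominates it once $l$ is large. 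A perturbation argument then delivers the uniform gap $\varepsilon_1$ for $L_l^D$ with $l\ge 1$.

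The main technical obstacle is the spectral convergence $L_l^D\to \tilde L_l$ on the expanding, eventually unbounded domains $\Sk\setminus B_1$ as $k\to 0$, since weak limits of eigenfunction sequences could lose mass into the growing ``north cap''. This is handled in the same spirit as the proof of Proposition \ref{Pr21}: a Cantor diagonal extraction on concentric annuli produces a weak limit in $H^1_{0,r}(\R^d\setminus B_1)$ which solves the limit eigenvalue problem, and the exponential decay of $\tilde u_\lambda$ together with the Strauss-type decay of $u_{k,\lambda}$ from Lemma \ref{ultimo}\,(2) ensure that the non-coercive term $pu_{k,\lambda}^{p-1}\phi_l^2$ is effectively supported in a fixed compact set, preventing loss of compactness. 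Combining the three regimes yields the proposition with $\varepsilon=\min(\varepsilon_0,\varepsilon_1)$, which in particular gives the invertibility of $L^D$.
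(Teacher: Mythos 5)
Your proposal is essentially correct but reorganizes the argument along a genuinely different axis than the paper does. In the paper, Proposition \ref{nondeg D} is a one-line corollary of Proposition \ref{Pr25}\,(iii), which establishes the uniform bound $Q^D_{k,\lambda}(\psi)>\varepsilon$ on all of $E^D_{k,\lambda}$ by a single compactness argument on the full $G$-symmetric space: assume a violating sequence $\psi_n$ with $k_n\to 0$, extract via Cantor diagonalization (as in Steps 1--5 of the proof of Proposition \ref{Pr25}\,(ii), with no boundary terms) a nonzero weak limit $\psi_0\in\tilde E^D_\lambda$, and contradict the Euclidean bound in Proposition \ref{Pr26}\,(iii), which is imported directly from the inequality $\lambda_0>\Lambda_0$ in \cite{RRS20}. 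No spherical-harmonic decomposition is used at this stage. You instead split $L^D$ into modal operators $L_l^D$, treat $l=0$ via the radial nondegeneracy of Proposition \ref{list}\,c), treat fixed $l\ge 1$ via the positivity of $\tilde L_l$, and patch in an explicit angular-dominance bound for large $l$ to make the lower bound uniform across modes. Both routes rest on the same two ingredients — Euclidean nondegeneracy from \cite{RRS20} and a compactness argument as $k\to 0$ — and both work; the paper's is more economical because the compactness is performed once and the high-mode estimate is never needed, whereas yours is more explicit mode-by-mode and provides an elementary angular-dominance argument for the bulk of the spectrum. One caution worth making explicit in your write-up: the positivity of $\tilde L_l$ for $l\ge 1$ is \emph{not} a consequence of Proposition \ref{list}\,c) (which addresses only the radial spectrum), nor is it simply because ``the kernel belongs to the D-to-N map''; it is precisely the statement that $\lambda_0>\Lambda_0$, i.e.\ Proposition \ref{Pr26}\,(iii), which is exactly what the paper's single compactness argument pivots on.
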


The proof of this proposition is an immediate consequence of Proposition \ref{Pr25}, iii). We emphasize that the values $\lambda_0$, $\lambda_1$, which will be fixed in the rest of the section, are given in Proposition \ref{Pr26} and depend only on $d$, $p$ and $G$.

\medskip The main result of this section is the following:

\begin{proposition} \label{Pr30}
	Assume that $\lambda\in[\lambda_{0},\lambda_{1}]$, and $k \in (0, k_0)$, where $k_0$ is given in Proposition \ref{nondeg D}. Then, there exists a neighborhood $\mathcal{U}$ of $0$ in $ C_{G,0}^{2,\alpha}(\mathbb{S}^{d-1})$, independent of $\lambda$, $k$, such that for any $v \in \mathcal{U}$, the problem
	\begin{equation}\label{eq31}
	\begin{cases}
	-\lambda\Delta u+u-u^{p} =0&\emph{in $\mathbb{S}^{d}(k)\setminus B_{1+v}$},\\
	u>0 & \emph{in $\mathbb{S}^{d}(k)\setminus B_{1+v}$}, \\
	u=0 &\emph{on $\partial B_{1+v}$},
	\end{cases}
	\end{equation}
	has a unique positive solution $u=u_{k, \lambda}^v\in C^{2,\alpha}(\mathbb{S}^{d}(k)\setminus B_{1+v})$ in a neighborhood of $u_{k, \lambda}$.  %
	Moreover the dependence of $u$ on the function $v$ is $C^1$ and $u_{k, \lambda}^0=u_{k, \lambda}$.
\end{proposition}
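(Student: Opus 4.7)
The plan is to convert problem \eqref{eq31}, posed on the moving domain $\Sk\setminus B_{1+v}$, into an equation on the fixed domain $\Omega_k := \Sk\setminus B_1$ by pulling back with a diffeomorphism, and then apply an Implicit Function Theorem at the point $(u_{k,\lambda},0)$, using the Dirichlet nondegeneracy supplied by Proposition \ref{nondeg D}. Concretely, I fix a smooth cut-off $\eta \in C^\infty([1,\infty))$ with $\eta(1)=1$ and $\eta\equiv 0$ for $r\geq 2$, and for $v$ small in $C^{2,\alpha}_{G,0}(\mathbb{S}^{d-1})$ I set
\[
\Psi_v : \Sk\setminus B_1 \longrightarrow \Sk\setminus B_{1+v}, \qquad \Psi_v(r,\theta) = \bigl(r+\eta(r)\,v(\theta),\,\theta\bigr),
\]
written in the coordinates \eqref{coord}. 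For $v$ small enough $\Psi_v$ is a $C^{2,\alpha}$-diffeomorphism; it is the identity for $r\geq 2$, and it sends $\partial B_1$ onto $\partial B_{1+v}$. A function $u$ solves \eqref{eq31} iff $\tilde u := u\circ\Psi_v$ satisfies $\tilde u|_{\partial B_1}=0$ together with
\[
F(\tilde u,v) := -\lambda\,\mathcal{L}_v\tilde u + \tilde u - \tilde u^{\,p} = 0 \quad\text{in }\Omega_k,
\]
where $\mathcal{L}_v$ is the pullback of $\Delta_{g_k}$ by $\Psi_v$, depending smoothly on $v$, with $\mathcal{L}_0=\Delta_{g_k}$ and coefficients that differ from those of $\Delta_{g_k}$ only on the set $\{1\leq r\leq 2\}$, which is $k$-independent.

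Since $p>1$ and $u_{k,\lambda}$ is bounded (Lemma \ref{ultimo}), the map
\[
F:\bigl(C^{2,\alpha}_G(\Omega_k)\cap\{\tilde u|_{\partial B_1}=0\}\bigr)\times C^{2,\alpha}_{G,0}(\mathbb{S}^{d-1})\longrightarrow C^{0,\alpha}_G(\Omega_k)
\]
is $C^1$ in a neighborhood of $(u_{k,\lambda},0)$, with $F(u_{k,\lambda},0)=0$. The partial derivative $\partial_{\tilde u}F(u_{k,\lambda},0)$ is exactly the Dirichlet linearization $L^D$ from \eqref{eq220}. By Propositions \ref{index} and \ref{nondeg D}, $L^D$ has a single negative eigenvalue and all other eigenvalues bounded below by a constant $\varepsilon>0$, uniformly for $\lambda\in[\lambda_0,\lambda_1]$ and $k\in(0,k_0)$; in particular $L^D$ is an isomorphism on $H^1_{0,G}(\Omega_k)$ with uniformly bounded inverse. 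Combining this with standard Schauder estimates up to $\partial B_1$ (all constants controlled thanks to the uniform $L^\infty$ bound of Lemma \ref{ultimo}) upgrades it to an isomorphism $L^D : C^{2,\alpha}_{G,0}(\Omega_k) \to C^{0,\alpha}_G(\Omega_k)$ with $\|(L^D)^{-1}\|\leq C$ independent of $(\lambda,k)$. Plugging these uniform bounds into the quantitative inverse function statement (Proposition \ref{Pr22}) produces a neighborhood $\mathcal{U}$ of $0$ in $C^{2,\alpha}_{G,0}(\mathbb{S}^{d-1})$, independent of $\lambda,k$, and a unique $C^1$ solution map $v\mapsto\tilde u(v)$ with $\tilde u(0)=u_{k,\lambda}$. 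Pushing forward by $\Psi_v$ yields $u_{k,\lambda}^v:=\tilde u(v)\circ\Psi_v^{-1}$, and positivity of $u_{k,\lambda}^v$ is obtained, after possibly shrinking $\mathcal{U}$, from the Hopf lemma near $\partial B_{1+v}$ combined with the strong maximum principle in the interior (since $u_{k,\lambda}>0$ in $\Omega_k$ and small $C^{2,\alpha}$ perturbations preserve nonnegativity of solutions of $-\lambda\Delta u+u-u^p=0$).

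The main obstacle is guaranteeing that the neighborhood $\mathcal{U}$ can be chosen \emph{independent} of $\lambda\in[\lambda_0,\lambda_1]$ and $k\in(0,k_0)$. This reduces to three uniform-in-$(\lambda,k)$ estimates: (a) $C^{2,\alpha}$-bounds on $u_{k,\lambda}$ up to $\partial B_1$, which follow from the $L^\infty$ bound in Lemma \ref{ultimo} via bootstrap with Schauder theory in normal-coordinate charts of $\Sk$; (b) the uniform invertibility of $L^D$ already discussed; and (c) a uniform Lipschitz bound on $v\mapsto \partial_{\tilde u}F(\cdot,v)$ in the appropriate operator norm, which holds because the support of $\eta$ lies in the fixed set $\{1\leq r\leq 2\}$, where the geometric coefficients of $g_k$ and the pull-back coefficients of $\mathcal{L}_v$ vary smoothly and boundedly in $(k,v)$ for $k\in(0,k_0)$ and $v$ small. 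With these three ingredients, the quantitative IFT delivers $\mathcal{U}$ with the required uniformity, and $u_{k,\lambda}^0=u_{k,\lambda}$ by construction.
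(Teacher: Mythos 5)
Your overall strategy coincides with the paper's: pull the problem back to the fixed domain $\Sk\setminus B_1$ via a diffeomorphism supported near $\partial B_1$, linearize, use Proposition~\ref{nondeg D} for uniform invertibility of $L^D$, and invoke the quantitative IFT (Proposition~\ref{Pr22}) to get a neighborhood $\mathcal U$ independent of $(\lambda,k)$. The specific diffeomorphism ($r\mapsto r+\eta(r)v(\theta)$ versus the paper's $r\mapsto (1+\chi(r)v(\theta))r$) is an immaterial choice. The one genuinely different design decision is that you run the IFT between H\"older spaces $C^{2,\alpha}_{G}\to C^{0,\alpha}_G$, whereas the paper works in $H^1_{0,G}\to H^*_G$ (the space where Proposition~\ref{nondeg D} is actually stated) and only upgrades to $C^{2,\alpha}$ at the very end by Schauder theory. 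Your route is viable in principle, but it imports extra bookkeeping that you sweep under the rug: the potential $pu_{k,\lambda}^{p-1}$ vanishes like $(r-1)^{p-1}$ at $\partial B_1$, so for $1<p<2$ and $\alpha>p-1$ it is not even $C^{0,\alpha}$ and $L^D$ does not map $C^{2,\alpha}_{G,0}$ into $C^{0,\alpha}_G$; one would have to restrict $\alpha\leq p-1$ or work in weighted spaces. Staying in $H^1$ as the paper does avoids this entirely.

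The genuine gap is that you drop the positive-part regularization. The paper defines the nonlinear map with $[(u_{k,\lambda}+\psi)^+]^{p}$, and this is not a cosmetic detail. With your $F(\tilde u,v)=-\lambda\mathcal L_v\tilde u+\tilde u-\tilde u^{\,p}$, the map is not even well defined on a $C^{2,\alpha}$-ball around $u_{k,\lambda}$ with zero boundary trace when $p\notin\mathbb N$: $u_{k,\lambda}$ vanishes on $\partial B_1$ and decays exponentially near the north pole, so a $C^{2,\alpha}$-small competitor $\tilde u$ can be negative on a set of positive measure and $\tilde u^{\,p}$ has no meaning there. Your positivity step then inherits the same weakness: the Hopf argument controls a fixed collar of $\partial B_1$, and $C^0$-closeness controls any fixed compact subset where $u_{k,\lambda}$ is bounded below, but in the exponential tail near the north pole $u_{k,\lambda}$ is as small as you like and a $k,\lambda$-\emph{uniform} $C^{2,\alpha}$ bound on $\tilde u-u_{k,\lambda}$ does not force $\tilde u\geq 0$ there. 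The paper's device closes both holes at once: with $[(\cdot)^+]^p$ the map $N$ is globally $C^1$ on the relevant function space, and once $\hat u$ solves $-\lambda\Delta_{g_v}\hat u+\hat u-(\hat u^+)^p=0$ with $\hat u|_{\partial B_1}=0$, testing against $\hat u^-$ gives $\int \lambda|\nabla\hat u^-|^2+(\hat u^-)^2=0$, hence $\hat u\geq 0$, and the strong maximum principle then yields $\hat u>0$. You should incorporate this truncation (or an equivalent device such as $|\tilde u|^{p-1}\tilde u$ plus a sign argument) to make your $F$ well defined and to obtain positivity rigorously.
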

\begin{proof}
 Let $v\in C_{G}^{2,\alpha}(\mathbb{S}^{d-1})$. It will be more convenient to consider the fixed domain $\mathbb{S}^{d}(k)\setminus B_{1}$ endowed with a new metric depending on $v$. This will be possible by considering the parameterization of $\mathbb{S}^{d}(k)\setminus B_{1+v}$ defined by $\Xi:\mathbb{S}^{d}(k)\setminus B_{1}\rightarrow\mathbb{S}^{d}(k)\setminus B_{1+v},$
	\begin{equation}\label{eq32}
	\Xi(r,\theta):=\left(\left(1+\chi(r)v(\theta)\right)r,\theta\right),
	\end{equation}
	where $\chi$ is a cut-off function
	\begin{equation*}
	 \chi(r)=
	\begin{cases}
	0, &r \geq\frac{3}{2},\\
	1, &r \leq\frac{5}{4}.
	\end{cases}
	\end{equation*}	
	Therefore, we consider the coordinates $(r,\theta)\in \left(1,\frac{\pi}{k}\right)\times\mathbb{S}^{d-1}$ from now on, and we can write the new metric in these coordinates as
\[g_{v}=a^{2}dr^{2}+2abdrdv+b^{2}dv^{2}+S^{2}_{k}\left(\left(1+\chi(r)v(\theta)\right)r\right)\, \mathring{h} ,\]
where $a=1+\chi(r)v(\theta)+\chi'(r)v(\theta)r$, $b=\chi(r)r$, and $\mathring{h}$ is the standard metric on $\mathbb{S}^{d-1}$ induced by the Euclidean one. It is clear that $a=1,b\equiv 0$ if $v=0$. Up to some multiplicative constant, we can now write the problem (\ref{eq31}) as
	\begin{equation}\label{eq33}
	\begin{cases}
-\lambda\Delta_{g_{v}} \hat{u}+\hat{u}-\hat{u}^{p}=0 &\mbox{in $\mathbb{S}^{d}(k)\setminus B_{1}$ },\\
	\hat{u}=0 &\mbox{on $\partial B_{1}$}.
	\end{cases}
	\end{equation}
	As $v\equiv0$, the metric $g_{v}$ is just the round metric $g_{k}$, and $\hat{u}=u_{k,  \lambda}$ is therefore a solution of (\ref{eq33}).  In the general case, the expression between the function $u$ and the function $\hat{u}$ can be represented by
	\[\hat{u}=\Xi^{*}u.\]
	For all $\psi\in H_{0,G}^{1}(\mathbb{S}^{d}(k)\setminus B_{1}),$ we define:
	\begin{equation} \label{eq34} N(v,\psi,\lambda):=-\lambda\Delta_{g_{v}}(u_{k,\lambda}+\psi)+(u_{k, \lambda}+\psi)-[(u_{k, \lambda}+\psi)^{+}]^{p},\end{equation}
	where $(u_{k, \lambda}+\psi)^{+}$ is the positive part of $u_{k,\lambda}+\psi.$ We have\[N(0,0,\lambda)=0.\]
	The mapping $N$ is $C^{1}$ from a neighborhood of $(0,0,\lambda)$ in $C_{G}^{2,\alpha}(\mathbb{S}^{d-1})\times H_{0,G}^{1}(\mathbb{S}^{d}(k)\setminus B_{1})\times[\lambda_{0},\lambda_{1}]$ into $H^{*}_G(\mathbb{S}^{d}(k)\setminus B_{1}).$ The partial differential of $N$ with respect to $\psi$ at $(0,0,\lambda)$ is
	\[D_{\psi}N|_{(0,0,\lambda)}(\psi)=-\lambda\Delta\psi+\psi-pu_{k,\lambda}^{p-1}\psi.\]
Since $\lambda\in[\lambda_{0},\lambda_{1}]$, $D_{\psi}N|_{(0,0,\lambda)}(\psi)$ is precisely invertible from $H_{0,G}^{1}(\mathbb{S}^{d}(k)\setminus B_{1})$ into $H^{*}_G(\mathbb{S}^{d}(k)\setminus B_{1})$ by the fact that the operator $L_{k, \lambda}^D$ is nondegenerate, see Proposition \ref{nondeg D}. The Implicit Function Theorem therefore yields that there exists $\psi(v,\lambda)\in H_{0,G}^{1}(\mathbb{S}^{d}(k)\setminus B_{1})$ such that $N(v,\psi(v,\lambda),\lambda)=0$ for $v$ in a neighborhood of $0$ in $C_{G}^{2,\alpha}(\mathbb{S}^{d-1})$. Observe that the neighborhood $\mathcal{U}$ can be taken uniformly in $k$ by the quantitative version of the Implicit Function Theorem, see Proposition \ref{Pr22}.
The function $\hat{u}:=u_{k,\lambda}+\psi$ so obtained solves the problem $-\lambda\Delta_{g_{v}} \hat{u}+\hat{u}-(\hat{u}^{+})^{p}=0.$ By the maximum principle, $\hat{u}$ is positive and solves \eqref{eq33}.
\end{proof}

Let $k \in (0, k_0)$. After the canonical identification of $\partial B_{1+v}$ with $\mathbb{S}^{d-1},$ we define $F_k: \mathcal{U}\times[\lambda_{0},\lambda_{1}] \rightarrow C^{1,\alpha}_{G,0}(\mathbb{S}^{d-1})$,
\begin{equation}\label{eq35}
F_k(v,\lambda)= \partial_{\nu} u \Big|_{\partial B_{1+v}}-\frac{1}{\mbox{Vol}(\partial B_{1+v})}\int_{\partial B_{1+v}}\partial_{\nu} u.
\end{equation}
Here $\mathcal{U}$ and $u=u_{k, \lambda}^v$ are as given by Proposition \ref{Pr30}. Notice that $F_k(v,\lambda)=0$ if and only if $\partial_{\nu} u$ is constant on the boundary $\partial B_{1+v}.$ Obviously, $F_k(0,\lambda)=0$ for all $\lambda\in[\lambda_{0},\lambda_{1}].$ Our goal is to find a branch of nontrivial solutions $(v,\lambda)$ to the equation $F_k(v,\lambda)=0$ bifurcating from some point $(0, \lambda_*(k))$, $\lambda_*(k) \in[\lambda_{0},\lambda_{1}].$ For this aim, we will use a local bifurcation argument. This leads to the study of the linearization of $F_k$ around a point $(0,\lambda)$. To start that, we first show the following useful lemmas.

 \begin{lemma} \label{le29}
Assume that $\lambda\in[\lambda_{0},\lambda_{1}]$, and $k \in (0, k_0)$, where $k_0$ is given in Proposition \ref{nondeg D}. Then for all $v\in C_{G}^{2,\alpha}(\mathbb{S}^{d-1})$, there exists a unique solution $\psi= \psi_{k, \lambda}^v \in C^{2, \alpha}_G(\mathbb{S}^{d}(k)\setminus B_{1})$ to the problem
\begin{equation}\label{eq23}
  \begin{cases}
  -\lambda\Delta\psi +\psi -p \, u_{k,\lambda}^{p-1}\psi =0 &\emph{in $\mathbb{S}^{d}(k)\setminus B_{1}$, }\\
  \psi= v &\emph{on $\partial B_1$.}
  \end{cases}
\end{equation}
\end{lemma}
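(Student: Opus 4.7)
The plan is to treat \eqref{eq23} as a linear inhomogeneous Dirichlet problem for the operator $L^D$ introduced in \eqref{eq220}, and to exploit its isomorphism property from Proposition \ref{nondeg D}. First I would reduce to zero boundary data. Pick a smooth cut-off $\eta:[1,\pi/k)\to[0,1]$ with $\eta\equiv 1$ in a neighborhood of $r=1$ and $\eta\equiv 0$ for $r\geq 5/4$, and set $V(r,\theta):=\eta(r)\,v(\theta)$. Since $v\in C^{2,\alpha}_G(\mathbb{S}^{d-1})$, we have $V\in C^{2,\alpha}_G(\mathbb{S}^{d}(k)\setminus B_1)$, $V|_{\partial B_1}=v$, and $V$ vanishes away from a neighborhood of $\partial B_1$ (in particular near the north pole). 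Writing $\psi=V+\tilde\psi$, the lemma becomes equivalent to solving
$$ L^D(\tilde\psi)=-L^D(V)\quad\text{in } \mathbb{S}^{d}(k)\setminus B_1,\qquad \tilde\psi=0\ \text{on } \partial B_1, $$
with a right-hand side in $C^{\alpha}_G\subset L^2_G\subset H^*_G(\mathbb{S}^{d}(k)\setminus B_1)$.

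The central step is now immediate: by Proposition \ref{nondeg D}, for $\lambda\in[\lambda_0,\lambda_1]$ and $k\in(0,k_0)$ the operator
$$ L^D: H^{1}_{0,G}(\mathbb{S}^{d}(k)\setminus B_1)\longrightarrow H^{*}_{G}(\mathbb{S}^{d}(k)\setminus B_1) $$
is an isomorphism, so there exists a unique $\tilde\psi\in H^{1}_{0,G}$ solving the reduced problem. The $G$-symmetry is automatic since the whole argument takes place inside the $G$-invariant class.

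To upgrade regularity I would invoke classical Schauder theory. Since $u_{k,\lambda}$ is a positive classical solution of the semilinear elliptic equation \eqref{eq13} with smooth nonlinearity, a standard bootstrap gives $u_{k,\lambda}\in C^{\infty}$, hence the zeroth-order coefficient $p\,u_{k,\lambda}^{p-1}$ is smooth. Interior Schauder estimates on the compact manifold $\mathbb{S}^{d}(k)\setminus B_1$ (the north pole being an interior regular point of the manifold, only a coordinate singularity of the $(r,\theta)$ chart) together with boundary Schauder estimates at $\partial B_1$ applied to $L^D(\tilde\psi)=-L^D(V)\in C^{\alpha}_G$ with zero Dirichlet datum yield $\tilde\psi\in C^{2,\alpha}_G(\mathbb{S}^{d}(k)\setminus B_1)$. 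Therefore $\psi:=V+\tilde\psi\in C^{2,\alpha}_G$ solves \eqref{eq23} and attains $v$ on $\partial B_1$. Uniqueness in this class follows from the injectivity of $L^D$ on $H^{1}_{0,G}$: two such solutions differ by a $G$-symmetric $H^1_0$ element of the kernel, which is trivial.

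There is no real obstacle: the substantive ingredient has been placed in Proposition \ref{nondeg D}, and what remains is the routine extension-and-subtraction trick combined with classical linear elliptic regularity.
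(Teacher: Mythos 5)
Your proposal is correct and follows essentially the same route as the paper: extend the boundary data into the domain (you use an explicit cut-off $V=\eta(r)v(\theta)$, the paper takes an arbitrary $C^{2,\alpha}_G$ extension $\psi_0$), subtract to reduce to homogeneous Dirichlet data, solve via the isomorphism property of $L^D$ from Proposition \ref{nondeg D}, and recover $C^{2,\alpha}$ regularity by Schauder estimates, with uniqueness coming from injectivity of $L^D$.
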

\begin{proof} Let $\psi_{0}(x) \in C_{G}^{2, \alpha}(\mathbb{S}^{d}(k)\setminus B_{1})$ such that $\psi_0|_{\partial B_1} = v$.
If we set $w=\psi-\psi_{0}$, the problem (\ref{eq23}) is equivalent to the problem
\begin{equation*}
  \begin{cases}
  -\lambda\Delta w+w-p u_{k,\lambda}^{p-1}w= - \Big ( -\lambda\Delta \psi_{0}+\psi_{0}-pu_{k,\lambda}^{p-1}\psi_{0}  \Big ) &\mbox{in $\mathbb{S}^{d}(k)\setminus B_{1}$, }\\
 w=0 &\mbox{on $\partial B_1$.}
  \end{cases}
\end{equation*}
Observe that the right hand side of the above equation is in $H^{*}_G(\mathbb{S}^{d}(k)\setminus B_{1})$. Since by Proposition \ref{nondeg D} the operator $L_{k, \lambda}^D$ is a bijection, there exists a solution $w$. By Schauder estimates, $w$ has the required regularity and the result follows.
\end{proof}
\begin{lemma} \label{Le31} Assume that $\lambda\in[\lambda_{0},\lambda_{1}]$, and $k \in (0, k_0)$, where $k_0$ is given in Proposition \ref{nondeg D}.
Let $v\in C_{G,0}^{2,\alpha}(\mathbb{S}^{d-1})$ and $\psi \in C_{G}^{2,\alpha}(\mathbb{S}^{d}(k)\setminus B_{1})$ be the solution of  (\ref{eq23}). Then
 \[\int_{\mathbb{S}^{d}(k)\setminus B_{1}}\psi z=0,~\qquad  \int_{\partial B_1}\partial_{\nu}\psi =0.\]
 Here $z$ stands for the eigenfunction associated to the negative eigenvalue of $L^D$, as given in Proposition \ref{index}.
\end{lemma}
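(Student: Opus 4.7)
The two identities can be established by separate methods: the first via a Green-type identity involving the eigenfunction $z$, and the second via a spherical harmonic decomposition of $\psi$ that exploits the radial symmetry of $u_{k,\lambda}$.

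For $\int_\Omega \psi z = 0$ (where $\Omega := \mathbb{S}^{d}(k)\setminus B_1$), I would multiply the equation $L^D \psi = 0$ by $z$, the eigenvalue equation $L^D z = \tau z$ by $\psi$, subtract, and integrate over $\Omega$. The zeroth-order terms cancel, so only the Laplacian contribution survives, and integration by parts converts it into a boundary term on $\partial B_1$:
\[
-\tau \int_\Omega \psi\, z \, \textnormal{dvol}_{g_k} \;=\; -\lambda\int_{\partial B_1} (z\,\partial_\nu \psi - \psi\,\partial_\nu z)\, d\sigma \;=\; \lambda \int_{\partial B_1} v\, \partial_\nu z\, d\sigma,
\]
using $z = 0$ and $\psi = v$ on $\partial B_1$. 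The crucial point is that by Proposition \ref{index} the eigenfunction $z$ is radially symmetric, so $\partial_\nu z$ is constant along $\partial B_1$. Hence the right-hand side equals that constant times $\int_{\partial B_1} v\, d\sigma$, which vanishes because $v \in C^{2,\alpha}_{G,0}(\mathbb{S}^{d-1})$ has mean zero. Since $\tau \neq 0$, the first identity follows.

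For $\int_{\partial B_1}\partial_\nu \psi = 0$, I would decompose $\psi$ in spherical harmonics on $\mathbb{S}^{d-1}$:
\[
\psi(r,\theta) \;=\; \sum_{j \geq 0} \psi_j(r)\, Y_j(\theta),
\]
with $Y_0$ constant. Because $u_{k,\lambda}$ depends only on $r$, the operator $L^D$ preserves each harmonic component, so each $\psi_j$ solves an independent radial ODE. The zeroth mode is precisely the angular average $\psi_0(r) = \omega_{d-1}^{-1}\int_{\mathbb{S}^{d-1}} \psi(r,\theta)\,d\theta$, which therefore satisfies the radial version of $L^D \psi_0 = 0$ on $(1, \pi/k)$ with Dirichlet condition $\psi_0(1) = \omega_{d-1}^{-1}\int_{\mathbb{S}^{d-1}} v\, d\theta = 0$, once again by the mean-zero hypothesis. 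Proposition \ref{nondeg D} asserts that $L^D$ is an isomorphism on $H^1_{0,G}(\Omega)$; since radial functions form a closed invariant subspace of $H^1_{0,G}$, the radial restriction of $L^D$ is also injective, forcing $\psi_0 \equiv 0$. In polar coordinates the boundary integral picks out exactly the radial mode:
\[
\int_{\partial B_1}\partial_\nu \psi\, d\sigma \;=\; -S_k(1)^{d-1}\, \omega_{d-1}\, \psi_0'(1) \;=\; 0.
\]

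\textbf{Main obstacle.} The only delicate point is verifying that the radial restriction of $L^D$ inherits injectivity from the global operator. This follows transparently from the fact that the sole nonpositive eigenvalue $\tau$ (Proposition \ref{index}) is strictly negative, while Proposition \ref{nondeg D} pushes every other eigenvalue above a positive threshold, so zero is not an eigenvalue on any invariant subspace containing the radial functions. The integration by parts is routine since $\partial \Omega = \partial B_1$ and the north pole, although a coordinate singularity in $(r,\theta)$, is an interior smooth point of the compact manifold $\mathbb{S}^d(k)$.
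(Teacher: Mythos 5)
Your proof of the first identity matches the paper exactly: a Green identity between $L^D z = \tau z$ and $L^D\psi = 0$, followed by the observations that $z=0$ on $\partial B_1$, $\partial_\nu z$ is constant by radial symmetry of $z$, and $v$ has zero mean.

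For the second identity you take a genuinely different route. The paper introduces the auxiliary function $\vartheta$, defined as the unique $H^1_G$-solution of the homogeneous linearized equation with boundary value $\vartheta\equiv 1$ on $\partial B_1$ (existence and uniqueness from Lemma \ref{le29}); by uniqueness $\vartheta$ is radial, so a second Green identity between the $\vartheta$-equation and the $\psi$-equation gives $\int_{\partial B_1}\bigl(\partial_\nu\psi\,\vartheta-\partial_\nu\vartheta\,\psi\bigr)=0$, and substituting $\vartheta=1$, $\partial_\nu\vartheta$ constant, and $\int_{\partial B_1}v=0$ kills both boundary terms. You instead decompose $\psi$ into spherical harmonics, observe that because $u_{k,\lambda}$ is radial the operator $L^D$ diagonalizes, so the zeroth (radial) mode $\psi_0$ solves the radial Dirichlet problem $L^D\psi_0=0$, $\psi_0(1)=0$; injectivity of $L^D$ on $H^1_{0,G}\supset H^1_{0,r}$ (Proposition \ref{nondeg D}) forces $\psi_0\equiv 0$, and the boundary integral $\int_{\partial B_1}\partial_\nu\psi$ detects only this mode. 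Both arguments rest on exactly the same pillars (nondegeneracy of $L^D$ and radiality of $u_{k,\lambda}$) and the same mean-zero hypothesis on $v$; yours makes the mechanism more explicit by showing $\psi_0\equiv 0$ outright rather than indirectly canceling boundary terms, while the paper's is marginally more uniform in style, reusing the same Green-identity template for both claims. Both are correct and comparable in length.
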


\begin{proof} {Let
\[L^{D}(z)=\tau z \,\ \mbox{in $\mathbb{S}^{d}(k)\setminus B_{1}$}, \,\, z=0 \,\, \mbox{on $\partial B_1$},\]
where $L^{D}$ is given in (\ref{eq220}). We now multiply the above equation by $\psi=\psi_{k, \lambda}^v ,$} the equation in (\ref{eq23}) by $z$, and
integrate by parts to gain
\[\int_{\partial B_1}\Big(\partial_{\nu} \psi \, z -\partial_{\nu} z \, \psi \Big)=\int_{\mathbb{S}^{d}(k)\setminus B_{1}}\tau z \psi \,.\]
Then we can at once gain the first identity by the facts that $z=0$, $\partial_{\nu} z$ is constant and $\psi= v$ has $0$ mean on $\partial B_1$.

\medskip

We now define $\vartheta\in H_{G}^{1}(\mathbb{S}^{d}(k)\setminus B_{1})$ as the unique solution of the problem
\begin{equation}\label{eq355}
     \begin{cases}
     -\lambda\Delta\vartheta+\vartheta-pu_{k,\lambda}^{p-1}\vartheta=0 &\mbox{in $\mathbb{S}^{d}(k)\setminus B_{1}$, }\\
      \vartheta=1 &\mbox{on $\partial B_1$,}
      \end{cases}
    \end{equation}
whose existence has been proved in Lemma ~\ref{le29}. Observe that, by uniqueness, $\vartheta$ is radially symmetric. Then we multiply the equation in \eqref{eq355} by $\psi$, the equation in \eqref{eq23} by $\vartheta$, and integrate by parts to obtain
\[\int_{\partial B_{1}}\Big(\partial_{\nu} \psi \, \vartheta-\partial_{\nu} \vartheta \,  \psi \Big)=0.\]
Then we can immediately gain the second identity by the facts that $\vartheta=1, \partial_{\nu} \vartheta$ is constant and $\psi= v$ on $\partial B_1$.
\end{proof}

\medskip

For $\lambda\in[\lambda_{0},\lambda_{1}]$, $k \in (0, k_0)$, we can define the linear continuous operator $H_{k,\lambda}:C^{2,\alpha}_{G,0}(\mathbb{S}^{d-1})\rightarrow C_{G,0}^{1,\alpha}(\mathbb{S}^{d-1})$ by
\begin{equation}\label{eq36}
H_{k, \lambda}(v) =  \partial_{\nu} (\psi_v) -\frac{(d-1)k}{\tan(k)} \, v,
\end{equation}
where $\psi_v$ is given by Lemma ~\ref{le29}. It is worth pointing out that the constant $$\frac{(d-1)k}{\tan(k)}$$ is nothing but the mean curvature of $\partial B_1 \subset \Sk$.

\medskip

We show now that the linearization of the operator $F_k$ with respect to $v$ at $v=0$ is given by $H_{k,\lambda}$, up to a constant.

\begin{proposition} \label{Pr32}
For any $\lambda\in[\lambda_{0},\lambda_{1}]$ and $k \in (0, k_0)$ we have
\[
D_{v}(F_k)(\lambda, 0) = \partial_{r}u(1)\, H_{k, \lambda}\,,
\]
where $u=u_{k,\lambda}$.
 \end{proposition}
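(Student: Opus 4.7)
The plan is to linearize $F_k$ at $(0,\lambda)$ along $w \in C^{2,\alpha}_{G,0}(\mathbb{S}^{d-1})$ by performing a first-order expansion, in $\epsilon$, of both the solution $u^{\epsilon w}=u_{k,\lambda}^{\epsilon w}$ and of its normal derivative along the moving boundary $\partial B_{1+\epsilon w}$. Writing $c:=\partial_r u_{k,\lambda}(1)$, the goal is to show $D_v F_k(\lambda,0)\cdot w=c\,H_{k,\lambda}(w)$.

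First, I would introduce the formal derivative $\bar\psi(r,\theta):=\partial_\epsilon|_{\epsilon=0}\,u^{\epsilon w}(r,\theta)$ on $\mathbb{S}^{d}(k)\setminus\overline{B}_1$. Since both $u^{\epsilon w}$ and $u:=u_{k,\lambda}$ solve $-\lambda\Delta u+u-u^{p}=0$, the function $\bar\psi$ satisfies the linearized equation $L^{D}\bar\psi=0$. To read off its boundary trace at $\partial B_1$, I would expand the identity $u^{\epsilon w}(1+\epsilon w(\theta),\theta)=0$ to first order in $\epsilon$: since $u(1)=0$ and $u'(1)=c$, this forces $\bar\psi(1,\theta)=-c\,w(\theta)$. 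Lemma~\ref{le29}, applied with boundary datum $-cw$, combined with linearity, then identifies $\bar\psi=-c\,\psi_w$.

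Next, I would compute the normal derivative at the moving boundary. Using that $u^v$ vanishes on $\partial B_{1+v}$ (so its tangential derivatives are slaved to the radial one via $\nabla u^v = -\partial_r u^v\,\nabla v$ on $\mathbb{S}^{d-1}$), a direct computation in the metric $g_k$ gives
\[
\partial_\nu u^v\big|_{(1+v(\theta),\theta)} = \pm\,\partial_r u^v(1+v(\theta),\theta)\,\sqrt{1+S_k^{-2}(1+v)\,|\nabla v|^2}\,,
\]
the global sign coming from the chosen orientation of $\nu$. The square-root factor is $1+O(\epsilon^2)$, so it suffices to linearize $\partial_r u^{\epsilon w}(1+\epsilon w(\theta),\theta)$; by the chain rule and the first step,
\[
\partial_r u^{\epsilon w}(1+\epsilon w,\theta) = c + \epsilon\bigl[u''(1)\,w(\theta) + \partial_r\bar\psi(1,\theta)\bigr] + O(\epsilon^2)\,.
\]
Evaluating the ODE \eqref{eq14} at $r=1$ with $u(1)=0$ yields the crucial simplification $u''(1)=-\tfrac{(d-1)k}{\tan k}\,c$, and the first step lets one rewrite $\partial_r\bar\psi(1,\theta) = -c\,\partial_r\psi_w(1,\theta)$, i.e.\ as $\pm c\,\partial_\nu\psi_w$ in the notation of~\eqref{eq36}. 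With the sign conventions of Lemma~\ref{le29} and~\eqref{eq36} aligned, the order-$\epsilon$ coefficient packages exactly into $c\,H_{k,\lambda}(w)$.

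Finally, subtracting the mean in the definition of $F_k$ does not alter the first-order term, because both $w$ (by assumption) and $\partial_\nu\psi_w$ (by the second identity of Lemma~\ref{Le31}) integrate to zero on $\mathbb{S}^{d-1}$. Hence $F_k(\epsilon w,\lambda)=\epsilon\,c\,H_{k,\lambda}(w)+O(\epsilon^2)$, from which the claim follows. The main technical obstacle is making the first step rigorous, since the domains of the $u^{\epsilon w}$ vary with $\epsilon$: the existence of the limit $\bar\psi$ and the meaning of its trace at $\partial B_1$ are not a priori obvious objects. The cleanest alternative is to work on the fixed domain $\mathbb{S}^d(k)\setminus B_1$ via the diffeomorphism $\Xi$ of~\eqref{eq32} and the decomposition $\hat u=u+\psi_v$ of Proposition~\ref{Pr30}: by Proposition~\ref{Pr22}, $v\mapsto\psi_v$ is $C^1$, and its derivative at $v=0$ differs from the pull-back of $\bar\psi$ by an explicit radial correction coming from $D_v\Xi|_{v=0}=\chi(r)\,r\,w(\theta)\,\partial_r$, whose careful book-keeping together with the first-order variation of the pulled-back metric $g_v$ reproduces the same identity.
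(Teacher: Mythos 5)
Your proposal is correct and ultimately lands on the same argument as the paper, since you explicitly identify the fixed-domain pullback via $\Xi$ as the rigorous version of your formal expansion; the supporting ingredients are the same (Lemma~\ref{le29} for the solution $\psi_w$, Lemma~\ref{Le31} for the mean-zero property of $\partial_\nu\psi_w$, and the ODE~\eqref{eq14} evaluated at $r=1$ to get $u''(1)=-\tfrac{(d-1)k}{\tan k}\,\partial_r u(1)$).

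Where you genuinely diverge is in how the normal derivative on the perturbed boundary is handled. The paper expands the pulled-back metric $g_v$ in $B_{5/4}\setminus B_1$, writes out the perturbed unit normal $\hat\nu=(1+sw)^{-1}\partial_r+o(s)\,\partial_{\theta_i}$, and computes $g_v(\nabla\hat u,\hat\nu)$ term by term. You instead exploit the Dirichlet condition $u^v\equiv 0$ on $\partial B_{1+v}$ to slave the tangential derivatives to the radial one, obtaining the closed-form identity $\partial_\nu u^v=\pm\,\partial_r u^v\sqrt{1+S_k^{-2}(1+v)\,|\nabla v|^2}$ on the moving boundary; since the square root contributes only at second order, the entire metric-and-normal expansion collapses to linearizing $\partial_r u^{\epsilon w}(1+\epsilon w,\theta)$. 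This is a cleaner bookkeeping device and avoids the $o(s)$-cancellations between the expansion of $\hat u$ and of $\hat\nu$ in the paper (where the term $w\,\partial_r u_{k,\lambda}(1)$ from the pullback of $u_{k,\lambda}$ cancels against the $-sw\,\partial_r u_{k,\lambda}(1)$ coming from $(1+sw)^{-1}$). The trade-off, as you note, is that making the first step rigorous (existence and trace of $\bar\psi$ on a varying domain) requires going through a fixed-domain reformulation anyway, which is precisely the route the paper takes from the outset. One cosmetic point: be careful with the orientation of $\nu$ — the paper's own computation uses $\hat\nu\sim+\partial_r$ while the convention in the statement of~\eqref{eq03bis} would give $\nu\sim-\partial_r$ on $\partial B_1$; the global sign is immaterial for the bifurcation argument (only the kernel of $H_{k,\lambda}$ matters), but it is worth being consistent when writing $\partial_\nu u(1)$ versus $\partial_r u(1)$.
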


\begin{proof}
By the $C^1$ regularity of $F_k$, it is enough to compute the linear operator obtained by the directional derivative of $F_k$ with respect to $v$, computed at $(0,\lambda)$. Such derivative is given by
\begin{align*}
   F_k'(w)=\mathop {\lim}\limits_{s\rightarrow 0}\frac{F_k(sw,\lambda)-F_k(0,\lambda)}{s}=\mathop {\lim}\limits_{s\rightarrow 0}\frac{F_k(sw,\lambda)}{s}.
    \end{align*}
    Let $v=sw,$ we consider the parameterization of $\mathbb{S}^{d}(k)\setminus B_{1+v}$ given in (\ref{eq32}) for $(r,\theta)\in \left(1,\frac{\pi}{k}\right)\times\mathbb{S}^{d-1}$. Let $g_{v}$ be the induced metric such that $\hat{u} = \hat u^v_{k,\lambda} =\Xi^{*}u^v_{k,\lambda}$ (smoothly depending on the real parameter $s$) solves the problem
    \begin{equation*}
     \begin{cases}
     -\lambda\Delta_{g_{v}}\hat{u}+\hat{u}-\hat{u}^{p}=0 &\mbox{in $\mathbb{S}^{d}(k)\setminus B_{1}$, }\\
      \hat{u}=0 &\mbox{on $\partial B_{1}$}.
      \end{cases}
    \end{equation*}
 We define $\hat{u}_{k, \lambda}=\Xi^{*}u_{k,\lambda}$, which is a solution of
\[-\lambda\Delta_{g_{v}}\hat{u}_{k,\lambda}+\hat{u}_{k,\lambda}-\hat{u}_{k,\lambda}^{p}=0\]
in $\mathbb{S}^{d}(k)\setminus B_{1}$ (notice that $u_{k,\lambda}$ is radial and then can be extended as a solution of \eqref{eq14} in a small inner neighborhood of $\partial B_1$), and
\[\hat{u}_{k,\lambda}(r,\theta)=u_{k,\lambda}\big((1+sw)r,\theta\big)\]
on $\partial B_1$. Let $\hat{u}=\hat{u}_{k,\lambda}+\hat{\psi},$ we can get that
\begin{equation}\label{eq37}
     \begin{cases}
     -\lambda\Delta_{g_{v}}\hat{\psi}+(\hat{u}_{k,\lambda}+\hat{\psi})-(\hat{u}_{k,\lambda}+\hat{\psi})^{p}-\hat{u}_{k,\lambda}+\hat{u}_{k,\lambda}^{p}=0  &\mbox{in $\mathbb{S}^{d}(k)\setminus B_{1}$ },\\
      \hat{\psi}=-\hat{u}_{k,\lambda}  &\mbox{on $\partial B_1$}.
      \end{cases}
    \end{equation}
 Obviously, $\hat{\psi}$ is differentiable with respect to $s$. When $s=0$, we have $\hat{u}=u_{k,\lambda}.$ Then, $\hat{\psi}=0$ as $s=0.$ We set
  \[\dot{\psi}=\partial_{s}\hat{\psi}|_{s=0}.\]
 Differentiating (\ref{eq37}) with respect to $s$ and evaluating the result at $s=0$, we get that
  \begin{align*}
     \begin{cases}
     -\lambda\Delta\dot{\psi}+\dot{\psi}-pu_{k,\lambda}^{p-1}\dot{\psi}=0  &\mbox{in $\mathbb{S}^{d}(k)\setminus B_{1}$, }\\
      \dot{\psi}=-\partial_{r}u_{k,\lambda}(1) w  &\mbox{on $\partial B_1$.}
  \end{cases}
    \end{align*}
  Then $\dot{\psi} = -\partial_{r}u_{k,\lambda}(1)\, \psi$ where $\psi=\psi_{k, \lambda}^v$ is as given in Lemma ~\ref{le29} (with $v=w$). Then, we can write
     \[\hat{u}(r,\theta)=\hat{u}_{k,\lambda}(r,\theta)-s\partial_{r}u_{k,\lambda}(1)\, \psi(r,\theta)+o(s).\]
    In particular, in $B_{5/4}\setminus B_{1}$ we have
   \begin{align*}
   \hat{u}(r,\theta)&=u_{k,\lambda}((1+sw(\theta))r,\theta)-s\partial_{r}u_{k,\lambda}(1)\, \psi(r,\theta)+o(s)\\
         &=u_{k,\lambda}(r,\theta)+s\big(rw(\theta)\partial_{r}u_{k,\lambda}(r,\theta)-\partial_{r}u_{k,\lambda}(1)\, \psi(r,\theta)\big)+o(s).
         \end{align*}
In order to complete the proof of the result, it is enough to calculate the normal derivation of the function $\hat{u}$ when the normal is calculated with respect to the metric $g_{v}$. Since the coordinates $(r,\theta)\in \left(1,\frac{\pi}{k}\right)\times \mathbb{S}^{d-1}$, the metric $g_{v}$ can be expanded in $B_{5/4}\setminus B_{1}$ as
         \[g_{v}=(1+sw)^{2}dr^{2}+2sr(1+sw)drdw+s^{2}r^{2}dw^{2}+S_{k}^{2}((1+sw)r)\, \mathring{h}\,,\]
         where again $\mathring{h}$ is the standard metric on $\mathbb{S}^{d-1}$ induced by the Euclidean one.
It follows from this expression that the unit normal vector field to $\partial B_1$ for the metric $g_{v}$ is given by
          \[\hat{\nu}=\big((1+sw)^{-1}+o(s)\big)\partial_{r}+o(s)\partial_{\theta_{i}},\]
          where $\theta_{i}$ are the vector fields induced by a parameterization of $\mathbb{S}^{d-1}$. As a result,
   \[g_{v}(\nabla\hat{u},\hat{\nu})=\partial_{r}u_{k,\lambda}+s\big(w\partial_{r}^{2}u_{k,\lambda}-\partial_{r}u_{k,\lambda}\partial_{r}\psi\big)+o(s)\]
   on $\partial B_1$. From the fact that $\partial_{r}u_{k,\lambda}$ and $\partial_{r}^{2}u_{k,\lambda}$ are constant, while the term $w$ and $\psi$ have mean $0$ on $\partial B_1,$ and
   \[-\lambda\left(\partial_{r}^{2}u_{k,\lambda}+\frac{(d-1)k}{\tan(k)}\partial_{r}u_{k,\lambda}\right)=0\] on $\partial B_1$,
   we can conclude the proof of the result by using Lemma \ref{Le31}.
  \end{proof}

\section{Study of the linearized operator $H_{k, \lambda}$}
\label{Section 5}

In view of Proposition \ref{Pr32}, a bifurcation of the branch $(0, \lambda)$ of solutions to the equation $F_k(v, \lambda)=0$ might appear only at point $(0,\lambda_{*})$ such that $H_{k, \lambda_{*}}$ becomes degenerate. We will see that this is true for some $\lambda_{*}$.


\medskip

As we shall see in Lemma \ref{le43}, the behavior of $H_{k, \lambda}$ is related to the following quadratic form:
\begin{equation} \label{Q}
\begin{array}{c} Q_{k,\lambda}: H^1_{G}(\Sk \setminus B_1) \to \R, \\ \\ 	Q_{k,\lambda}(\psi)= \displaystyle \int_{\mathbb{S}^{d}(k)\setminus B_{1}}\big(\lambda |\nabla \psi|^2 +\psi^2-p u_{k, \lambda}^{p-1}\psi^2 \big) -\lambda\frac{(d-1)k}{\tan(k)}\int_{\partial B_{1}} \psi^2. \end{array}
\end{equation}
When restricted to functions that vanish at the boundary, we obtain the quadratic form associated to the Dirichlet problem:
\begin{equation} \label{QD}
\displaystyle	Q^D_{k,\lambda} := Q_{k,\lambda}|_{ H^1_{0,G}(\Sk \setminus B_1) }.
\end{equation}
Sometimes we will simply write $Q^D$ instead of $Q^D_{k,\lambda}$.

\medskip

We first show that $Q^D$ attains negative values, which shows the validity of Proposition \ref{index}.

\begin{proof}[Proof of Proposition \ref{index}]
 In this proof we drop the subindices $k, \lambda$. Observe that by multiplying equation \eqref{eq13} by $u$ and integrating by parts we obtain that:
$$ \int_{\mathbb{S}^{d}(k)\setminus B_{1}} \lambda |\nabla u|^{2}+ |u|^{2}-u^{p+1}=0.$$
As a consequence,
$$ Q^{D}(u) = -(p-1) \int_{\mathbb{S}^{d}(k)\setminus B_{1}} u^{p+1}<0.$$
Then, the first eigenvalue of $L^D$ is strictly  negative. Since the first eigenfunction is simple, it is radially symmetric.
\end{proof}

In what follows it will be necessary to restrict those quadratic forms to the following spaces:
 \begin{equation}\label{E}
	\begin{array}{c}	E_{k, \lambda}=\Bigg\{\psi\in H^1_G(\mathbb{S}^{d}(k)\setminus B_{1}): \ \dis\int_{\partial B_1}\psi=0,\int_{\mathbb{S}^{d}(k)\setminus B_{1}}\psi z_{k,\lambda}=0\Bigg\}, \\ \\
		E_{k, \lambda}^D=\Bigg\{\psi\in H^1_{0,G}(\mathbb{S}^{d}(k)\setminus B_{1}): \ \dis \int_{\mathbb{S}^{d}(k)\setminus B_{1}}\psi z_{k,\lambda}=0\Bigg\}. \end{array}
\end{equation}
As $k$ tends to $0$ we have formally limit quadratic forms related to the problem in $\R^d \setminus B_1$. On that purpose, let us define:
\begin{equation} \begin{array}{c}\label{limitQ} \displaystyle  \tilde{Q}_{\lambda}: H_{G}^1(\mathbb{R}^{d}\setminus B_{1})\rightarrow\mathbb{R}  \\   \\ \displaystyle
		\tilde{Q}_{\lambda}(\psi):=\int_{\mathbb{R}^{d}\setminus B_{1}} \lambda |\nabla \psi|^{2}+ \psi^{2}-p\tilde{u}_{\lambda}^{p-1}\psi^{2} - \lambda (d-1)  \int_{\partial B_{1}}\psi^2, \\ \\ \displaystyle \tilde{Q}^D_{\lambda} = \tilde{Q}_{\lambda}|_{ H^1_{0,G}(\R^d \setminus B_1) }. \end{array}
\end{equation}
Here $\tilde{u}_{\lambda}$ is the solution given in Proposition \ref{list}. We also define the analogous functional spaces:
 \begin{equation}\label{limitE}
  \begin{array}{c}	\tilde{E}_{\lambda}=\Bigg\{\psi\in H^1_G(\mathbb{R}^{d}\setminus B_{1}): \ \dis\int_{\partial B_1}\psi=0,\int_{\mathbb{R}^{d}\setminus B_{1}}\psi \tilde{z}_{\lambda}=0\Bigg\}, \\ \\
  	\tilde{E}^D_{\lambda}=\Bigg\{\psi\in H^1_{0,G}(\mathbb{R}^{d}\setminus B_{1}): \ \dis \int_{\mathbb{R}^{d}\setminus B_{1}}\psi \tilde{z}_{\lambda}=0\Bigg\}. \end{array}
\end{equation}
In order to facilitate the reading, sometimes we will drop the subscripts $k, \lambda$ of the above definitions.

\medskip

The behavior of the quadratic forms $\tilde{Q}_{\lambda}$ and $\tilde{Q}^D_{\lambda}$ has been studied in \cite{RRS20}, and the following result holds true.

\begin{proposition} \label{Pr26} There exist $0 < \lambda_0 < \lambda_1$ and $\e > 0$ such that:
	\begin{itemize}
		\item [(i)] $\tilde{Q}_{\lambda_{0}}(\psi)< - \e \ $ for some $\psi\in \tilde{E}_{\lambda_0}$, $\| \psi \|_{L^2} =1$;
		\item [(ii)] $\tilde{Q}_{\lambda_1}(\psi)> \e \ $  for any $\psi\in \tilde{E}_{\lambda_1}$, $\| \psi \|_{L^2} =1$;
		\item [(iii)] $\tilde{Q}^D_{\lambda}(\psi)> \e \ $ for any $\psi\in \tilde{E}^D_{\lambda}$ , $\| \psi \|_{L^2} =1$, and any $\lambda > \lambda_{0}.$
	\end{itemize}
\end{proposition}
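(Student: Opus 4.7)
The plan is to reduce the entire statement to the spectral and bifurcation analysis already carried out for the exterior problem in $\mathbb{R}^d \setminus B_1$ in \cite{RRS20}. I would begin by identifying the selfadjoint operators whose quadratic forms are $\tilde{Q}_\lambda$ and $\tilde{Q}^D_\lambda$. After integration by parts, both correspond to
$$\tilde{L}_\lambda \psi = -\lambda \Delta \psi + \psi - p\, \tilde{u}_\lambda^{p-1}\psi$$
on $\mathbb{R}^d \setminus B_1$: $\tilde{Q}^D_\lambda$ is the form of $\tilde{L}_\lambda$ under the Dirichlet condition, while $\tilde{Q}_\lambda$ is the form of $\tilde{L}_\lambda$ with the Steklov-type boundary condition $\partial_\nu \psi = (d-1)\psi$ on $\partial B_1$. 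The exponential decay of $\tilde{u}_\lambda$ from Proposition \ref{list}(b) guarantees that $p\,\tilde{u}_\lambda^{p-1}$ induces a compact perturbation of the standard Helmholtz-type operator, so each form has a discrete spectrum on the respective $G$-symmetric space, with eigenvalues depending continuously on $\lambda$.

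For item (iii), I would leverage directly the Morse index analysis of Proposition \ref{list}(c): $\tilde{u}_\lambda$ has Morse index exactly one in $H^1_{0,r}(\mathbb{R}^d \setminus B_1)$, with the unique negative direction being the radial function $\tilde{z}_\lambda$. Because radial functions are $G$-symmetric, the Morse index of $\tilde{L}_\lambda^D$ on $H^1_{0,G}$ is also one and zero is not an eigenvalue. Consequently, $\tilde{Q}^D_\lambda$ is strictly positive on the orthogonal complement $\tilde{E}^D_\lambda$ of $\tilde{z}_\lambda$. A uniform lower bound $\varepsilon$ valid for all $\lambda > \lambda_0$ follows from continuity of the second eigenvalue in $\lambda$ on compact intervals, together with an asymptotic control for $\lambda \to \infty$: after the rescaling $x \mapsto \sqrt{\lambda}\, x$ the hole shrinks and the nonlinear contribution localizes, so one can check that the second eigenvalue of $\tilde{L}_\lambda^D$ stays uniformly bounded away from $0$.

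For items (i) and (ii) the essential ingredient from \cite{RRS20} is the existence of a bifurcation value $\lambda_* > 0$ at which the lowest eigenvalue of $\tilde{Q}_\lambda$ restricted to $\tilde{E}_\lambda$ vanishes and crosses zero transversally as $\lambda$ varies. I would then pick $\lambda_0 < \lambda_* < \lambda_1$ close enough to $\lambda_*$ that this lowest eigenvalue is strictly negative at $\lambda_0$ and all eigenvalues of $\tilde{Q}_\lambda|_{\tilde{E}_\lambda}$ are strictly positive at $\lambda_1$. Plugging in the corresponding eigenfunction at $\lambda_0$ and normalizing gives (i); the min-max characterization of the spectrum then gives (ii) with a uniform constant. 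The constant $\varepsilon > 0$ in all three statements is produced jointly by the continuity of the eigenvalue branches and the transversality of the crossing at $\lambda_*$, which also ensures that $\lambda_0$ can be chosen so that assertion (iii) is compatible with assertion (i).

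The hardest step will be verifying the transversal sign change of the relevant eigenvalue of $\tilde{Q}_\lambda|_{\tilde{E}_\lambda}$ at $\lambda_*$, which requires computing the derivative of this eigenvalue with respect to $\lambda$ and showing it does not vanish; equally delicate is the uniform positivity in (iii) as $\lambda \to \infty$, since this involves controlling eigenvalue branches on an unbounded parameter interval via the rescaling argument above. Both of these are the technical core of \cite{RRS20} and I would import them as black boxes rather than reprove them here.
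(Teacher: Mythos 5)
Your reduction of the statement to the spectral analysis of $\tilde L_\lambda$ and to facts imported from \cite{RRS20} is the right overall strategy, but the argument you offer for item (iii) contains a genuine logical gap. You write that since $\tilde u_\lambda$ has Morse index one in the space $H^1_{0,r}(\mathbb{R}^d\setminus B_1)$ of radial functions, and radial functions are $G$-symmetric, the Morse index of $\tilde L^D_\lambda$ on $H^1_{0,G}(\mathbb{R}^d\setminus B_1)$ is \emph{also} one, with no zero eigenvalue. The inclusion of radial functions into $G$-symmetric functions goes the wrong way for this conclusion: it shows only that the Morse index of $\tilde L^D_\lambda$ on $H^1_{0,G}$ is \emph{at least} one. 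Nothing in Proposition~\ref{list}(c) rules out additional negative (or null) directions coming from non-radial $G$-symmetric modes. Indeed the very formulation of (iii) — positivity only for $\lambda>\lambda_0$ — is a symptom of the fact that for small $\lambda$ such extra modes do exist. The paper handles this precisely by recalling the threshold
\[
\Lambda_0=\sup\left\{\lambda>0:\tilde Q^D_\lambda(\psi)\le 0\text{ for some }\psi\in\tilde E^D_\lambda,\ \psi\neq0\right\}
\]
from \cite{RRS20}, proving there that $0<\Lambda_0<\Lambda^*$, and then choosing $\lambda_0>\Lambda_0$. Your argument silently assumes $\Lambda_0=0$, which is false; you need to invoke the finiteness and positivity of $\Lambda_0$ from \cite{RRS20} rather than derive (iii) from the radial Morse index alone.

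For items (i) and (ii), the paper's proof is simpler than what you sketch: it does not require transversal crossing of an eigenvalue branch. It only uses the definition of $\Lambda^*=\sup\{\lambda>0:\tilde Q_\lambda(\psi)<0\text{ for some }\psi\in\tilde E_\lambda\}$, the strict inequality $\Lambda_0<\Lambda^*$ from \cite{RRS20}, and \cite[Proposition~5.3]{RRS20}, which gives $\tilde Q_\lambda>0$ on $\tilde E_\lambda\setminus\{0\}$ for all large $\lambda$. Taking any $\lambda_0\in(\Lambda_0,\Lambda^*)$ yields (i) and (iii) simultaneously; taking $\lambda_1>\Lambda^*$ large enough yields (ii). Your transversality-based discussion for (i)/(ii) would also work, but it imports more than is needed, and the real work in the proposition is the positivity of $\Lambda_0$, which your argument does not supply.
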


\begin{proof} The proof is basically contained in \cite{RRS20}. We first recall the definitions given in \cite[(3.6) and (5.7)]{RRS20} under our notations:
	
\begin{equation} \label{Lambda0}  \Lambda_0 = \sup \left\{ \lambda >0: \tilde{Q}_\lambda^D(\psi) \leq 0 \mbox{ for some } \psi \in \tilde{E}_\lambda^D, \ \psi \neq 0 \right\}. \end{equation}

\begin{equation} \label{Lambda*} \Lambda^* = \sup \{ \lambda >0: \tilde{Q}_\lambda(\psi) < 0 \ \mbox{ for some } \psi \in \tilde{E}_{\lambda}\}. \end{equation}

It is proved in \cite{RRS20} that the above suprema exist and that $0 < \Lambda_0 < \Lambda^*$. Then, we can take $\lambda_0 \in  (\Lambda_0, \Lambda^*)$ such that i) and iii) hold.

\medskip

Moreover, \cite[Proposition 5.3]{RRS20} implies that $\tilde{Q}(\psi)>0$ for any $\psi \in \tilde{E}_\lambda$, $\psi \neq 0$, provided that $\lambda$ is sufficiently large. Then we can take $\lambda_1 > \Lambda^*$ such that ii) is satisfied.

\end{proof}

In next proposition we use a perturbation argument to prove an analogous result for small $k>0$.
\begin{proposition} \label{Pr25} Fix $\lambda_0$ and $\lambda_1$ as given in Proposition \ref{Pr26}. By taking $k_0>0$ smaller if necessary, and for any $k \in (0, k_0)$ there exists $\e>0$ independent of $k$, $\lambda$ such that:
\begin{itemize}
\item [(i)] $Q_{k, \lambda_{0}}(\psi)< -\e $ for some $\psi\in E_{k, \lambda_0}$, $\| \psi \|_{L^2} =1$:
 \item [(ii)] $Q_{k, \lambda_{1}}(\psi)> \e$ for any $\psi\in E_{k, \lambda_1}$,  $\| \psi \|_{L^2} =1$:
 \item [(iii)] $Q_{k, \lambda}^D(\psi)> \e$ for any $\psi\in E^D_{k, \lambda}$, $\| \psi \|_{L^2} =1$ and any $\lambda\in[\lambda_{0},\lambda_{1}]$.

  \end{itemize}

In particular, (iii) implies Proposition \ref{nondeg D}.
\end{proposition}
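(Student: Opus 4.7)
The plan is to transfer the three statements of Proposition \ref{Pr26} (on the limit quadratic forms $\tilde{Q}_\lambda$, $\tilde{Q}^D_\lambda$ posed in $\mathbb{R}^d\setminus B_1$) to the quadratic forms $Q_{k,\lambda}$, $Q^D_{k,\lambda}$ on $\Sk\setminus B_1$ for small $k$, by exploiting (a) the uniform convergence $u_{k,\lambda}\to\tilde{u}_\lambda$ provided by Proposition \ref{Pr21}, (b) the uniform $L^\infty$ bound and uniform decay of $u_{k,\lambda}$ from Lemma \ref{ultimo}, and (c) the fact that $\frac{k}{\tan(k)}\to 1$, so that the boundary coefficient in $Q_{k,\lambda}$ converges to the one in $\tilde{Q}_\lambda$. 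A preliminary step, carried out by the same scheme as Proposition \ref{Pr21} (applying Proposition \ref{Pr22} to the linearized operator, whose limit is nondegenerate by Proposition \ref{list}(c)), is that the negative eigenfunction $z_{k,\lambda}$ of $L^D_{k,\lambda}$, suitably normalized, converges to $\tilde{z}_{\lambda}$ in the norm $\|\cdot\|_k$ (and hence locally in $H^1$), uniformly for $\lambda\in[\lambda_0,\lambda_1]$; this will be needed to pass to the limit in the orthogonality constraint defining $E_{k,\lambda}$.

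For statement (i), I would start from a fixed $\tilde\psi\in\tilde{E}_{\lambda_0}$ with $\|\tilde\psi\|_{L^2}=1$ and $\tilde{Q}_{\lambda_0}(\tilde\psi)<-2\varepsilon$ granted by Proposition \ref{Pr26}(i). Using the exponential decay of $\tilde{u}_{\lambda_0}$ and density, I may assume $\tilde\psi$ has compact support in $\mathbb{R}^d\setminus B_1$; for $k$ small this support sits inside $\Sk\setminus B_1$, and we view $\tilde\psi$ as a function there. A small correction of the form $\tilde\psi + a\, \mathbf{1}_{\partial B_1\text{-collar}} + b\, z_{k,\lambda_0}$ restores the two constraints defining $E_{k,\lambda_0}$, with $a,b=o_k(1)$ thanks to the convergence of $z_{k,\lambda_0}\to \tilde z_{\lambda_0}$. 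The difference $|Q_{k,\lambda_0}(\psi) - \tilde{Q}_{\lambda_0}(\tilde\psi)|$ is $o_k(1)$ by Proposition \ref{Pr21} (the nonlinear term), by $\frac{k}{\tan(k)}\to 1$ (boundary term), and by the fact that outside the support of $\tilde\psi$ the ambient metric $g_k$ agrees with the Euclidean one up to $o_k(1)$ on compacts.

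For statements (ii) and (iii) I would argue by contradiction: suppose there exist $k_n\to 0$, $\lambda_n\in[\lambda_0,\lambda_1]$ (WLOG $\lambda_n\to\lambda_*$), and $\psi_n\in E_{k_n,\lambda_n}$ (resp.\ $E^D_{k_n,\lambda_n}$) with $\|\psi_n\|_{L^2}=1$ and $Q_{k_n,\lambda_n}(\psi_n)\to 0$. First I would get a uniform $H^1$ bound: combining Lemma \ref{ultimo}(2), which makes $\int u_{k,\lambda}^{p-1}\psi_n^2$ controlled by $\|u_{k,\lambda}\|_{L^\infty(B_R)}^{p-1}\|\psi_n\|_{L^2}^2 + \delta^{p-1}\|\psi_n\|_{L^2}^2$, with a sharp trace inequality $\int_{\partial B_1}\psi^2\le \eta\|\nabla\psi\|^2 + C_\eta\|\psi\|^2$ for $\eta$ small, shows
\[
Q_{k,\lambda}(\psi_n)\ \ge\ c\,\|\nabla\psi_n\|_{L^2}^2 - C,
\]
so $\|\psi_n\|_{H^1}$ is bounded. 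Then, by a Cantor-diagonal extraction analogous to the proof of Proposition \ref{Pr21}, I extract a weak limit $\psi_0\in H^1_{G,\mathrm{loc}}(\mathbb{R}^d\setminus B_1)$, actually in $H^1_G(\mathbb{R}^d\setminus B_1)$ (resp.\ $H^1_{0,G}$) by lower semicontinuity, and $\psi_n\to\psi_0$ locally in $L^2$ and in trace on $\partial B_1$. Passing to the limit uses that $u_{k_n,\lambda_n}^{p-1}\to \tilde{u}_{\lambda_*}^{p-1}$ with uniform exponential decay (Lemma \ref{ultimo}(2)), so the nonlinear integral is tight. The constraints pass to the limit: $\int_{\partial B_1}\psi_0 = 0$ by trace compactness, and $\int\psi_0\tilde{z}_{\lambda_*}=0$ follows from the convergence $z_{k_n,\lambda_n}\to\tilde{z}_{\lambda_*}$ (uniformly integrable against any $H^1$ function).

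The two main obstacles are first showing that no mass of $\psi_n$ escapes to infinity, so that $\psi_0\not\equiv 0$, and second controlling the limit of $Q_{k_n,\lambda_n}(\psi_n)$ from below by $\tilde{Q}_{\lambda_*}(\psi_0)$. Both are handled by the same cut-off argument: for large $R$, let $\eta_R$ be a cut-off vanishing on $B_R$ and equal to $1$ outside $B_{2R}$. On the support of $\eta_R$, the uniform smallness of $u_{k,\lambda}^{p-1}$ (Lemma \ref{ultimo}(2)) and the positivity of $\lambda\|\nabla\cdot\|^2+\|\cdot\|^2$ give
\[
Q_{k_n,\lambda_n}(\eta_R\psi_n)\ \ge\ \tfrac{1}{2}\min(\lambda_0,1)\,\|\eta_R\psi_n\|_{H^1}^2,
\]
so all the $L^2$-mass outside $B_{2R}$ is bounded by $CQ_{k_n,\lambda_n}(\psi_n)+o_R(1)$, which vanishes. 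Hence $\psi_n\to\psi_0$ strongly in $L^2$ along the subsequence, giving $\|\psi_0\|_{L^2}=1$ and the lower semicontinuity $\tilde{Q}_{\lambda_*}(\psi_0)\le \liminf Q_{k_n,\lambda_n}(\psi_n)\le 0$. This contradicts Proposition \ref{Pr26}(ii) if $\lambda_*=\lambda_1$, and Proposition \ref{Pr26}(iii) if $\lambda_*\in[\lambda_0,\lambda_1]$ in the Dirichlet case, proving (ii) and (iii) respectively. Finally, the quantitative statements of Proposition \ref{nondeg D} follow from (iii) by the standard identification of the spectrum of $L^D_{k,\lambda}$ with the critical values of $Q^D_{k,\lambda}$ restricted to the $L^2$-sphere.
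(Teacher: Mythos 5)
Your overall strategy — transferring Proposition \ref{Pr26} from $\R^d\setminus B_1$ to $\Sk\setminus B_1$ by compactness and convergence arguments — is the same as the paper's, and your treatment of (i) matches the paper's quite closely (density to compactly supported functions, small corrections for the constraints, and convergence of the coefficient $\tfrac{k}{\tan k}\to 1$).

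The gap is in the cut-off argument you use for parts (ii) and (iii) to establish strong $L^2$ convergence of $\psi_n$ to $\psi_0$. After an IMS-type localization with a partition $\eta_R^2+\tilde\eta_R^2=1$, one has
\[
Q_{k_n,\lambda_n}(\eta_R\psi_n)\;=\;Q_{k_n,\lambda_n}(\psi_n)\;-\;Q_{k_n,\lambda_n}(\tilde\eta_R\psi_n)\;+\;O(1/R),
\]
and the inner-region term $Q_{k_n,\lambda_n}(\tilde\eta_R\psi_n)$ is \emph{not} bounded below by $-o_R(1)$: the localized function $\tilde\eta_R\psi_n$ no longer satisfies the orthogonality $\int\tilde\eta_R\psi_n\, z_{k_n}=0$, so it may have a large component along the negative eigenfunction. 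The quadratic form on it is only bounded below by $-C\|\tilde\eta_R\psi_n\|_{L^2}^2$ with $C$ of order the negative eigenvalue $\tau_{k_n,\lambda_n}$, which does not vanish. Hence the claim that ``all the $L^2$-mass outside $B_{2R}$ is bounded by $CQ_{k_n,\lambda_n}(\psi_n)+o_R(1)$'' does not follow, and one cannot conclude $\|\psi_0\|_{L^2}=1$ this way.

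Fortunately the stronger conclusion is not needed, and the same localization does give the weaker but sufficient statement $\psi_0\neq 0$: the displayed identity combined with $Q_{k_n,\lambda_n}(\eta_R\psi_n)\geq c\|\eta_R\psi_n\|_{H^1}^2$ and $Q_{k_n,\lambda_n}(\tilde\eta_R\psi_n)\geq -C\|\tilde\eta_R\psi_n\|_{L^2}^2$ forces $\|\tilde\eta_R\psi_n\|_{L^2}$ to be bounded away from $0$ for $n,R$ large. Since $\tilde Q_{\lambda_*}$ is homogeneous, $\tilde Q_{\lambda_*}(\psi_0)\le 0$ with $\psi_0\neq 0$ already contradicts Proposition \ref{Pr26}(ii)--(iii) after dividing by $\|\psi_0\|_{L^2}$. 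The paper avoids this issue entirely by normalizing $\psi_n$ in $H^1$ rather than $L^2$, extracting the Euler--Lagrange (eigenvalue) equation for the minimizer, and showing $\psi_0\neq 0$ by passing to the limit in that equation tested against $\psi_n$, using that the nonlinear and boundary integrals are compact (its Steps 2--5); it then reaches the contradiction from the limit equation directly. Finally, your proposal to obtain $z_{k,\lambda}\to\tilde z_\lambda$ via the quantitative Implicit Function Theorem is a genuinely different route than the paper's Step 4 (which runs a direct convergence argument); it can work because the first eigenvalue is simple, but it requires setting up the eigenpair map with the normalization constraint and checking nondegeneracy, which you should spell out.
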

In order to prove the above Proposition, we need the following lemma, whose proof will be given in the Appendix.
 \begin{lemma} \label{Le32} For any function $\psi\in H^{1}(\mathbb{S}^{d}(k)\setminus B_{1}),$ we have that
  \begin{align*}
 \|\psi\|^{2}_{L^{2}(\partial B_{1})}\leq 2\|\nabla \psi\|_{L^{2}(\mathbb{S}^{d}(k)\setminus B_{1})}\|\psi\|_{L^{2}(\mathbb{S}^{d}(k)\setminus B_{1})}+C\|\psi\|^{2}_{L^{2}(\mathbb{S}^{d}(k)\setminus B_{1})}
\end{align*}
for some constant $C=C(d)>0$, which does not depend on $k$.
 \end{lemma}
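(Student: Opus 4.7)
The strategy is a one-dimensional fundamental-theorem-of-calculus argument in the radial direction, localized near $\partial B_1$ by a smooth cutoff so that the curvature of $\Sk$ (encoded in $S_k$) does not produce $k$-dependent constants.

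Working in the coordinates $(r,\theta)$ of \eqref{coord}, denote $f(r):=\int_{\mathbb{S}^{d-1}}\psi^2(r,\theta)\,d\theta$, so that $\int_{\partial B_1}\psi^2 = S_k^{d-1}(1)\,f(1)$. Fix a smooth cutoff $\eta:[1,\pi/k)\to[0,1]$ with $\eta(1)=1$, $\eta\equiv 0$ on $[2,\pi/k)$, and $|\eta'|\le C$. For any $k<k_0$ small enough that $\pi/k>2$, the fundamental theorem of calculus gives
\begin{equation*}
\int_{\partial B_1}\psi^2 \;=\; -\int_1^{2}\frac{d}{dr}\Bigl[\eta(r)\,S_k^{d-1}(r)\,f(r)\Bigr]\,dr.
\end{equation*}
Expanding the derivative yields three terms: one with $\eta'(r)$, one with $(d-1)\,S_k^{d-2}(r)\,C_k(r)$, and the cross-term $2\eta(r)S_k^{d-1}(r)\int_{\mathbb{S}^{d-1}}\psi\,\partial_r\psi\,d\theta$.

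For the first two terms, I would use that on the interval $r\in[1,2]$ with $k$ small we have $S_k(r)\ge S_k(1)=\sin(k)/k$, which is bounded below by a universal constant (say $2/\pi$ for $k\le \pi/2$). Consequently $S_k^{d-2}(r)\le C(d)S_k^{d-1}(r)$ and $|\eta'(r)|S_k^{d-1}(r)\le C S_k^{d-1}(r)$ on $[1,2]$, so both terms are bounded by
\begin{equation*}
C(d)\int_1^{2}\!\!\int_{\mathbb{S}^{d-1}} S_k^{d-1}(r)\,\psi^2\,d\theta\,dr \;\le\; C(d)\,\|\psi\|_{L^2(\Sk\setminus B_1)}^2.
\end{equation*}
For the cross-term, I would apply Cauchy--Schwarz (keeping the volume element $S_k^{d-1}(r)\,dr\,d\theta$) together with $0\le\eta\le 1$ and $|\partial_r\psi|\le|\nabla\psi|$, which yields exactly
\begin{equation*}
2\!\int_1^{2}\!\eta(r)S_k^{d-1}(r)\!\!\int_{\mathbb{S}^{d-1}}\!|\psi\,\partial_r\psi|\,d\theta\,dr\;\le\; 2\,\|\nabla\psi\|_{L^2(\Sk\setminus B_1)}\,\|\psi\|_{L^2(\Sk\setminus B_1)}.
\end{equation*}
Combining these three estimates produces the desired inequality, with a constant $C$ that depends only on $d$.

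The only real subtlety is the coefficient $2$ in front of the product norm, which is needed (rather than a generic constant) so that this lemma can serve later in the perturbative analysis of the quadratic form $Q_{k,\lambda}$; this is ensured by choosing $\eta$ with $\eta\le 1$ and not integrating by parts a second time in the cross-term. The restriction $0<k<\pi^2/2$ stated in the lemma is used only to guarantee that $\pi/k>2$ and that $S_k(1)$ is bounded below on the relevant range; since $\pi^2/2<\pi$, the cutoff construction goes through without modification.
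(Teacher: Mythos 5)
Your argument is correct and is, in effect, the paper's own proof phrased slightly differently: the paper applies the divergence theorem to $\psi^2 M$ with the radial vector field $M=-\chi(r)\partial_r$, which, once the angular variables are integrated out, reduces exactly to your fundamental-theorem-of-calculus identity for $\eta(r)S_k^{d-1}(r)f(r)$, and the crucial ingredient in both is the bound $|\chi'(r)|+(d-1)\,k\,|\cot(kr)|\leq C(d)$ on the support of the cutoff, uniformly in small $k$. Two small quibbles that do not affect the conclusion: the monotonicity $S_k(r)\geq S_k(1)$ on $[1,2]$ actually needs $2k\leq\pi/2$ (so $k\leq\pi/4$), not merely $k<\pi/2$, and the constraint $0<k<\pi^2/2$ you attribute to this statement belongs to Lemma \ref{Le202} rather than Lemma \ref{Le32} — but since the lemma is only invoked for $k$ small, both proofs go through as intended.
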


\medskip

\begin{proof}[Proof of Proposition \ref{Pr25}.]
	
First, we prove that (i) holds. Let us give the following min-max characterization of the second eigenvalue related to the quadratic forms $Q_{k, \lambda}$ and $\tilde{Q}_{\lambda}$ as follows. Define $\mathcal{A}_{k}$ and $\tilde{\mathcal{A}}$ the class of 2-dimensional vector spaces in
\[\left\{\psi\in H^1_G(\mathbb{S}^{d}(k)\setminus B_{1}): \ \dis\int_{\partial B_1}\psi=0\right\},\]
and
\[\left\{\psi\in H^1_G(\mathbb{R}^{d}\setminus B_{1}): \ \dis\int_{\partial B_1}\psi=0\right\},\]
respectively. Then we are concerned with the infimum:

\begin{align*}
 \inf\limits_{U}\{\max\{Q_{k, \lambda}(\psi):\psi\in U,\|\psi\|_{L^{2}(\mathbb{S}^{d}(k)\setminus B_{1})}=1\}:U\in\mathcal{A}_{k}\},
\end{align*} and
\begin{align*}
 \inf\limits_{\tilde{U}}\{\max\{\tilde{Q}_{\lambda}(\psi):\psi\in \tilde{U},\|\psi\|_{L^{2}(\mathbb{R}^{d}\setminus B_{1})}=1\}:U\in\tilde{\mathcal{A} } \}.
\end{align*}
Observe that the last infimum is strictly negative for $\lambda= \lambda_0$ by Proposition \ref{Pr26}, i).
By density, there exists a 2-dimensional vector space of $U$ of functions in $C^{\infty}_{G}(\mathbb{R}^{d}\setminus B_{1})$ with support contained in a fixed compact set $K$ such that
\[\max\{\tilde{Q}_{\lambda_{0}}(\psi):\psi\in \tilde{U},\|\psi\|_{L^{2}(\mathbb{R}^{d}\setminus B_{1})}=1\}<0.\]
We also have
\[\max\{\tilde{Q}_{\lambda_{0}}(\psi):\psi\in \tilde{U},\|\psi\|_{L^{2}(\mathbb{S}^{d}(k)\setminus B_{1})}=1\}<0\]
for $k$ small, since both norms are equivalent in the 2-dimensional vector space $\tilde{U}.$

 Therefore, for any $\psi\in \tilde{U}$ with $\|\psi\|_{L^{2}(\mathbb{S}^{d}(k)\setminus B_{1})}=1$, one has that
 \begin{align*}
 Q_{k,\lambda_{0}}(\psi) =\tilde{Q}_{\lambda_{0}}(\psi) + o_k(1) <0.
\end{align*}
Clearly, we can choose such $\psi$ orthogonal to $z_{k,\lambda}$, so that \[Q_{k,\lambda_{0}}(\psi)<0,\]
 for some $\psi\in E_{k, \lambda_{0}}$ as $k$ small.

\bigskip

We now prove (ii). Since in what follows $\lambda = \lambda_1$ is fixed, we drop the subscripts of its dependence for the sake of clarity.

\medskip To reach a contradiction, we assume that there exists a sequence ${k_{n}}$, which converges to $0$ as $n\rightarrow\infty,$ such that $\tau_{2,n}:=\tau_{2}(k_{n})\leq o_n(1)$, where $\tau_{2}$ is defined as:

\begin{align*}
	\tau_2(k)&=\inf\{Q_{k, \lambda}(\psi):\psi\in E_{k, \lambda},\|\psi\|_{L^{2}(\mathbb{S}^{d}(k)\setminus B_{1})}=1\}.  \end{align*}
	
 By a standard minimization procedure, there exists a sequence of functions $\psi_{n}\in H^{1}_{G}(\mathbb{S}^{d}(k_{n})\setminus B_{1})$ with $\|\psi_{n}\|_{H^{1}_{G}(\mathbb{S}^{d}(k_{n})\setminus B_{1})}=1$ satisfying $\int_{\mathbb{S}^{d}(k_{n})\setminus B_{1}}\psi_{n} z_{k_{n}}=0$ solving the eigenvalue problem:
\begin{equation}\label{eq000}
  \begin{cases}
     -\lambda_{1}\Delta\psi_{n}+\psi_{n}-pu_{k_{n}} ^{p-1}\psi_{n}= \tau_{2,n}\psi_{n}&\mbox{in $\mathbb{S}^{d}(k_{n})\setminus B_{1}$ },\\
      \partial_{\nu}\psi_{n}=\frac{(d-1)k_{n}}{\tan(k_{n})}\psi_{n}+\mu_n &\mbox{on $\partial B_1$},
      \end{cases}
\end{equation}
for some $\mu_n\in \R.$

In what follows we just want to pass to the limit in $\psi_n$, $\tau_{2,n}$ and $z_{k_n}$. This will give a contradiction with Proposition \ref{Pr26}, ii). The limit argument is technically intricate and will be divided in several steps.

\medskip

\textbf{Step 1:} Up to a subsequence, $\psi_{n}$ converges weakly (in a sense to be specified) to some $\psi_{0}$ in $H^{1}_{G}(\mathbb{R}^{d}\setminus B_{1})$.

\medskip

First, let us consider $\psi_{n}$ in coordinates $(r,\theta), r\in (1,\frac{\pi}{k_n}),\theta\in\mathbb{S}^{d-1}.$

For any compact set  $K \subset \R^{d} \setminus B_1$, we have that $\| \psi_n \|_{H^1(K)}$ is bounded. Via a diagonal argument, we can take a subsequence $\psi_n$ such that $\psi_n \rightharpoonup \psi_0$ in $H^1_{loc}(\R^d \setminus B_1)$. By using compactness and taking a convenient subsequence, if necessary, we can
assume also that $\psi_n \to \psi_0$ in $L^2_{loc}(\R^d \setminus B_1)$ and also pointwise. We now claim that $\psi_0 \in H^1(\R^d \setminus B_1)$. We prove this by duality, showing that:

$$ \sup \left \{\int_{\R^d \setminus B_1} \nabla \psi_0 \cdot \nabla \xi + \psi_0 \ \xi:\ \xi \in  H^1(\R^d \setminus B_1) \mbox{ with compact support}, \ \|\xi \|_{H^1} \leq 1 \right  \} \leq 1.$$
Observe that:
$$\frac{S_{k_n}(r)^{d-1}}{r^{d-1}} \xi(r, \theta) \to \xi(r, \theta), \ \ \frac{S_{k_n}(r)^{d-1}}{r^{d-1}} \partial_r \xi (r, \theta) \to \partial_r \xi(r, \theta), $$$$ \frac{S_{k_n}(r)^{d-3}}{r^{d-3}} \nabla_{\theta} \xi(r, \theta) \to \nabla_{\theta} \xi(r, \theta),$$
in $L^2(\R^d \setminus B_1)$. Recall also that $\psi_n \rightharpoonup \psi_0 $ in $H^1_{loc}(\R^d \setminus B_1)$. Then,

\begin{align*} \int_{\R^d \setminus B_1} \nabla \psi_0 \cdot \nabla \xi + \psi_0 \  \xi = \lim_{n \to \infty} \int_{\mathbb{S}_{k_n}^d \setminus B_1}  \nabla \psi_n \cdot \nabla \xi + \psi_n \  \xi \\ \leq \lim_{n \to \infty} \| \xi\|_{H^1(\mathbb{S}^d(k_n) \setminus B_1)}  = \| \xi\|_{H^1(\mathbb{\R}^d \setminus B_1)} \leq 1. \end{align*}

For later use we point out that the restriction of $\psi_0$ to $r \in [1,\frac{\pi}{k_n} ]$ belongs to $L^2(\mathbb{S}^2(k_n) \setminus B_1)$,
since

$$ \int_{1}^{\frac{\pi}{k_n}} \int_{\mathbb{S}^{d-1}} S^{d-1}_{k_n}(r) (\psi_0)^2 \leq
\int_{1}^{+\infty} \int_{\mathbb{S}^{d-1}} r^{d-1} (\psi_0)^2 \leq 1.$$
Moreover, by local weak convergence,

\begin{equation} \label{compact bd} \psi_n|_{\partial B_1} \to \psi_0|_{\partial B_1} \mbox{ strongly in } L^2(\partial B_1). \end{equation}

\medskip

\textbf{Step 2:} The sequences $\tau_{2,n}$, $\mu_n$ are bounded. In particular, $\tau_{2,n}\rightarrow {\tau}_{2} \leq 0$ and $\mu_n \to \mu_0$ up to a subsequence.

\medskip

We argue by a contradiction and assume that $\tau_{2,n}\rightarrow -\infty.$
Multiply the equation (\ref{eq000}) with $\psi_{n},$
\begin{align}\label{eq0003}
 &\int_{\mathbb{S}^{d}(k_{n})\setminus B_{1}} \lambda_{1} |\nabla \psi_{n}|^{2}+ \psi_{n}^{2}-\int_{\mathbb{S}^{d}(k_{n})\setminus B_{1}} pu_{k_{n}}^{p-1}\psi_{n}^{2}-\lambda_{1}\frac{(d-1)k_n}{\tan(k_n)}  \int_{\partial B_{1}}\psi_{n}^2\notag\\
 &=\tau_{2,n}\int_{\mathbb{S}^{d}(k_{n})\setminus B_{1}}\psi_{n}^{2}.
\end{align}
It is clear that the left-hand side of (\ref{eq0003}) is bounded, so $\|\psi_{n}\|$ is supposed to converge to $0$. Therefore, by Lemma \ref{ultimo},
\begin{align*}
\int_{\mathbb{S}^{d}(k_{n})\setminus B_{1}} pu_{k_{n}}^{p-1}\psi_{n}^{2}
\leq C \|\psi_{n}\|_{L^{2}}^{2}
\rightarrow 0,
\end{align*}
Moreover, by the Lemma \ref{Le32},
 \[ \int_{\partial B_{1}}\psi_{n}^2\rightarrow0.\]
Then we can see that left-hand side of \eqref{eq0003} converges to $\lambda_{1}>0$ but the right-hand side is non-positive, which gives a contradiction.

For the estimate on $\mu_n$, take a test function $\phi(r)= (2-r)^+$. Clearly $\phi$ has compact support, is bounded in $H^1(\mathbb{S}^d (k_{n})\setminus B_1)$ and $\phi=1$ on $\partial B_1$. Multiplying equation \eqref{eq000} by $\phi$ and integrating, we obtain

$$\int_{\mathbb{S}^d(k_{n})\setminus B_1} \nabla \psi_n \cdot \nabla \phi + (1-\tau_{2,n}) \psi_n \phi - p u_{k_n}^{p-1} \psi \phi = \mu_n |\partial B_1|.$$
Taking into account the $L^{\infty}$ bound of $u_{k_n}$ (see Lemma \ref{ultimo}), we conclude.

\medskip

\textbf{Step 3:}  $\psi_0$ is a nonzero weak solution of the problem:

\begin{equation}\label{eq0}
	\begin{cases}
		-\lambda_{1}\Delta\psi_{0}+\psi_{}-p\tilde{u} ^{p-1}\psi_{0}= \tau_{2}\psi_{0}&\mbox{in $\mathbb{R}^{d}\setminus B_{1}$ },\\
		\partial_{\nu}\psi_{0}=(d-1) \psi_{0} +\mu_0 &\mbox{on $\partial B_1$}.
	\end{cases}
\end{equation}

Multiplying \eqref{eq000} by a test function with compact support and passing to the limit, we obtain that $\psi_0$ is a solution of \eqref{eq0}. We now show that $\psi_0 \neq 0$.

\medskip

By multiplying the equation (\ref{eq000}) with $\psi_{n}$, we obtain
\begin{align}\label{preq040}
	&\int_{\mathbb{S}^{d}(k_{n})\setminus B_{1}} \lambda_{1} |\nabla \psi_{n}|^{2}+ (1-\tau_{2,n})\psi_{n}^{2}\nonumber\\
	&\quad-\int_{\mathbb{S}^{d}(k_{n})\setminus B_{1}} pu_{k_{n}}^{p-1}\psi_{n}^{2}-\lambda_{1}\frac{(d-1)k_n}{\tan(k_{n})}  \int_{\partial B_{1}}\psi_{n}^2=0,
\end{align}

By \eqref{compact bd}, we have that:

$$ \int_{\partial B_{1}}\psi_{n}^2 \to \int_{\partial B_{1}}\psi_{0}^2.$$

We now claim that

\begin{equation} \label{puff} \displaystyle \int_{\mathbb{S}^{d}(k_{n})\setminus B_{1}}pu_{k_{n}}^{p-1} (\psi_{n}^2 - \psi_0^2) \to 0.\end{equation} Let us fix an arbitrary $\delta>0$, and $M>0$ sufficiently large as in Lemma \ref{ultimo}. We can compute:

\begin{align*}
	&\left | \int_{\mathbb{S}^{d}(k_{n})\setminus B_{1}}  u_{k_{n}}^{p-1} (\psi_{n}- \psi_0)^2  \right | = \left | \int_{1}^{\frac{\pi}{k_{n}}} \int_{\mathbb{S}^{d-1}} S^{d-1}_{k_n}(r)  u_{k_{n}}^{p-1} (\psi_{n}- \psi_0)^2   \right |\\
	& \leq  \left | \int_{1}^{M} \int_{\mathbb{S}^{d-1}} S^{d-1}_{k_n}(r)  u_{k_{n}}^{p-1} (\psi_{n}- \psi_0)^2 \right |  + \left | \int_{M}^{\frac{\pi}{k_{n}}} \int_{\mathbb{S}^{d-1}} S^{d-1}_{k_n}(r)  u_{k_{n}}^{p-1} (\psi_{n}- \psi_0)^2    \right |\\
	&=: (I)+(II).
\end{align*}

We are first concerned with the estimate of (I).
\begin{align*}
	& \left | \int_{1}^{M} \int_{\mathbb{S}^{d-1}} S^{d-1}_{k_n}(r)  u_{k_{n}}^{p-1} (\psi_{n}- \psi_0)^2  \right | \le C \left | \int_{1}^{M} \int_{\mathbb{S}^{d-1}} r^{d-1}  (\psi_{n}- \psi_0)^2 \right | = o_n(1),
\end{align*}
by the $L^2_{loc}$ convergence of $\psi_n$ and the uniform $L^{\infty}$ bound on $u_{k_n}$, see Lemma \ref{ultimo}.

\medskip

We now estimate (II) by making use of Lemma \ref{ultimo}, 2), to obtain:

\begin{align*}
	&  \int_{M}^{\frac{\pi}{k_n}} \int_{\mathbb{S}^{d-1}} S^{d-1}_{k_n}(r)  u_{k_{n}}^{p-1} (\psi_{n}- \psi_0)^2  \le \delta^{p-1}  \int_{M}^{\frac{\pi}{k_n}} \int_{\mathbb{S}^{d-1}} S^{d-1}_{k_n}(r) (\psi_{n}^2+ \psi_0^2) \\ & \le C \delta^{p-1}  \Big ( \int_{M}^{\frac{\pi}{k_n}} \int_{\mathbb{S}^{d-1}} S^{d-1}_{k_n}(r) \psi_{n}^2 + \int_{M}^{+\infty} \int_{\mathbb{S}^{d-1}} r^{d-1} \psi_{0}^2   \Big ) \le 2 C  \delta^{p-1}.
\end{align*}

Since $\delta>0$ is arbitrary, we conclude the proof \eqref{puff}.

\medskip Then, if $\psi_0=0$, \eqref{preq040} implies that:

$$ \int_{\mathbb{S}^{d}(k_{n})\setminus B_{1}} \lambda_{1} |\nabla \psi_{n}|^{2}+ (1-\tau_{2,n})\psi_{n}^{2} \to 0,$$

which is in contradiction with $\| \psi_n \|_{H^1(\mathbb{S}^d(k_n) \setminus B_1) } =1$.

\medskip

\textbf{Step 4:}  Defining $z_n= z_{k_n}$, we show that $\|z_{n}- \tilde{z}\|_{H^1_{0,r}(\mathbb{S}^{d}(k_{n})\setminus B_{1})}\rightarrow 0.$
\medskip

By definition,

\begin{equation}\label{eigen}
	\begin{cases}
		-\lambda_{1}\Delta z_{n}+z_{n}-pu_{k_{n}} ^{p-1} z_{n}= \tau_{n} z_{n}&\mbox{in $\mathbb{S}^{d}(k_{n})\setminus B_{1}$ },\\
		z_{n}=0 &\mbox{on $\partial B_1$},
	\end{cases}
\end{equation}
where $\tau_n <0$ by Proposition \ref{index}. Recall moreover that $\|z_{n}\|_{H^{1}(\mathbb{S}^{d}(k_{n})\setminus B_{1})}=1$. Hence we can use the same ideas in Steps 1, 2 and 3 to prove that $\tau_n \to \tilde{\tau}$ and also $z_{k_{n}}$ converges to $\tilde{z}$ weakly in $H_{0, loc}^1(\R^d \setminus B_1)$, strongly in $L_{loc}^2(\R^d \setminus B_1)$. Moreover $\tilde{z} \neq 0$ belongs to $H_{0}^1(\R^d \setminus B_1)$ with norm smaller or equal than 1, and $\tilde{z}$ is a solution of \eqref{z}.

Observe that the restriction of $\tilde{z}_n$ to $r \in [1, \frac{\pi}{k_n}]$ can be seen as an axially symmetric function in $H^1(\mathbb{S}^{d}(k_n) \setminus B_1)$, since:

$$\| \tilde{z}\|^2_{H^{1}(\mathbb{S}^{d}(k_{n})\setminus B_{1})} = \omega_d \int_1^{\frac{\pi}{k_n}} (\tilde{z}'(r)^2 + \tilde{z}(r)^2) S_{k_n}(r)^{d-1} \leq \omega_d \int_1^{+\infty} (\tilde{z}'(r)^2 + \tilde{z}(r)^2) r^{d-1} \leq 1.  $$

\medskip

{\bf Claim 1:}  $\ \displaystyle\int_{\mathbb{S}^{d}(k_{n})\setminus B_{1}} \lambda_{1} \nabla z_{n} \cdot \nabla (z_n - \tilde{z}) + (1-\tau_{n})z_{n}(z_n - \tilde{z}) = o_n(1).$

\medskip

By multiplying the equation (\ref{eigen}) by $z_{n}-\tilde{z}$ and integrating, we obtain
\begin{align}\label{eq040}
 &\int_{\mathbb{S}^{d}(k_{n})\setminus B_{1}} \lambda_{1} \nabla z_{n} \cdot \nabla (z_n - \tilde{z}) + (1-\tau_{n})z_{n}(z_n - \tilde{z}) -\int_{\mathbb{S}^{d}(k_{n})\setminus B_{1}} pu_{k_{n}}^{p-1} z_{n} (\tilde{z} - z_n)=0.
\end{align}
Hence, it suffices to show that:

$$ \displaystyle \int_{\mathbb{S}^{d}(k_{n})\setminus B_{1}}pu_{k_{n}}^{p-1} z_n (z_{n} - \tilde{z}) \to 0.$$
\medskip

Indeed, by H\"{o}lder inequality and Lemma \ref{ultimo},

\begin{align*} & \displaystyle \int_{\mathbb{S}^{d}(k_{n})\setminus B_{1}}pu_{k_{n}}^{p-1} z_n (z_{n} - \tilde{z})  \le \Big ( \displaystyle \int_{\mathbb{S}^{d}(k_{n})\setminus B_{1}}pu_{k_{n}}^{p-1} (z_{n} - \tilde{z})^2 \Big )^{1/2} \Big ( \displaystyle \int_{\mathbb{S}^{d}(k_{n})\setminus B_{1}}pu_{k_{n}}^{p-1} z_{n}^2 \Big )^{1/2} \\ & \le  \Big ( \displaystyle \int_{\mathbb{S}^{d}(k_{n})\setminus B_{1}}pu_{k_{n}}^{p-1} (z_{n} - \tilde{z})^2 \Big )^{1/2} C \|z_n\|_{L^2}  \end{align*}

Moreover, the same argument of the proof of \eqref{puff} implies that

$$  \displaystyle \int_{\mathbb{S}^{d}(k_{n})\setminus B_{1}}pu_{k_{n}}^{p-1} (z_{n} - \tilde{z})^2 \to 0. $$

\medskip

{\bf Claim 2: } $ \displaystyle \int_{\mathbb{S}^{d}(k_{n})\setminus B_{1}} \nabla \tilde{z} \cdot \nabla (z_{n}-\tilde{z}) \to 0, \ \int_{\mathbb{S}^{d}(k_{n})\setminus B_{1}} \tilde{z}(z_{n} - \tilde{z})\to 0.$

In order to show the first convergence, write:

\begin{align*}  & \int_{\mathbb{S}^{d}(k_{n})\setminus B_{1}} \nabla \tilde{z} \cdot \nabla (z_{n}-\tilde{z})\\ &= \int_{1}^{M} \int_{\mathbb{S}^{d-1}} S^{d-1}_{k_n}(r)  \partial_r \tilde{z} \, \partial_r (z_n - \tilde{z}) + \int_{M}^{\frac{\pi}{k_n}} \int_{\mathbb{S}^{d-1}} S^{d-1}_{k_n}(r)  \partial_r \tilde{z} \, \partial_r (z_n - \tilde{z}) = (I) + (II). \end{align*}

Clearly, $(I) \to 0$ because of the weak convergence $H^1_{loc}$ of $z_n$ to $\tilde{z}$. By H\"{o}lder inequality,

$$ |(II)| \leq C  \Big( \int_{M}^{\frac{\pi}{k_n}} \int_{\mathbb{S}^{d-1}} S^{d-1}_{k_n}(r) |\partial_r \tilde{z}|^2  \Big)^{1/2}.$$
Moreover,

$$ \limsup_{n \to + \infty} \int_{M}^{\frac{\pi}{k_n}} \int_{\mathbb{S}^{d-1}} S^{d-1}_{k_n}(r) |\partial_r \tilde{z}|^2  \leq \int_{M}^{+\infty} \int_{\mathbb{S}^{d-1}} r^{d-1} |\partial_r \tilde{z}|^2,$$
which can be made arbitrarily small by choosing $M$ appropriately.

The second convergence of claim 2 can be justified in the same way.

\bigskip We are now in conditions of proving Step 4:

\begin{align*} & \int_{\mathbb{S}^{d}(k_{n})\setminus B_{1}} \lambda_{1} |\nabla (z_{n} -\tilde{z})| ^2 + (1-\tau_{n}) (z_{n}- \tilde{z})^2 \\= &  \int_{\mathbb{S}^{d}(k_{n})\setminus B_{1}} \lambda_{1} \nabla (z_{n} -\tilde{z}) \cdot \nabla z_n + (1-\tau_{n}) (z_{n}- \tilde{z}) z_n  \\ - &  \int_{\mathbb{S}^{d}(k_{n})\setminus B_{1}} \lambda_{1} \nabla (z_{n} -\tilde{z}) \cdot \nabla \tilde{z} + (1-\tau_{n}) (z_{n}- \tilde{z}) \tilde{z}= o_n(1), \\ \end{align*}

by claims 1 and 2.

\medskip

\textbf{Step 5:} Conclusion.
We now show that
\[\int_{\mathbb{R}^{d}\setminus B_{1}} \psi_{0}\tilde{z}=0.\]
Indeed,
\begin{align*}
0=&\int_{\mathbb{S}^{d}(k_{n})\setminus B_{1}}  \psi_{n}z_{n}\\
&=\int_{\mathbb{S}^{d}(k_{n})\setminus B_{1}}  \psi_{n}(z_{n} -\tilde{z} ) +\int_{\mathbb{S}^{d}(k_{n})\setminus B_{1}}\tilde{z}(\psi_n- \psi_{0}) +\int_{\mathbb{S}^{d}(k_{n})\setminus B_{1}}   \psi_{0}\tilde{z} \\
& = (I) + (II) + (III).
\end{align*}

By H\"{o}lder inequality,

$$ (I) \leq \Big ( \int_{\mathbb{S}^{d}(k_{n})\setminus B_{1}} (z_n - \tilde{z})^2 \Big )^{1/2} \Big ( \int_{\mathbb{S}^{d}(k_{n})\setminus B_{1}} (\psi_n)^2 \Big )^{1/2} \to 0.$$

Moreover, taking $M>0$ sufficiently large,

\begin{align*} (II)= \int_{1}^{M} \int_{\mathbb{S}^{d-1}} S^{d-1}_{k_n}(r) \tilde{z} (\psi_{n}- \psi_0) + \int_{M}^{\frac{\pi}{k_n}} \int_{\mathbb{S}^{d-1}} S^{d-1}_{k_n}(r)  \tilde{z} (\psi_{n}- \psi_0)\end{align*}
The first term above converges to $0$ by the local weak convergence of $\psi_n$ to $\psi_0$. Moreover, the second term can be estimated via H\"{o}lder inequality:

\begin{align*} &\int_{M}^{\frac{\pi}{k_n}} \int_{\mathbb{S}^{d-1}} S^{d-1}_{k_n}(r)  \tilde{z} (\psi_{n}- \psi_0) \\ \le & \Big (\int_{M}^{\frac{\pi}{k_n}} \int_{\mathbb{S}^{d-1}} S^{d-1}_{k_n}(r)  (\psi_{n}- \psi_0)^2 \Big )^{1/2} \Big ( \int_{M}^{\frac{\pi}{k_n}} \int_{\mathbb{S}^{d-1}} S^{d-1}_{k_n}(r)  \tilde{z}^2 \Big )^{1/2}  \\  \le  & C \Big ( \int_{M}^{\frac{\pi}{k_n}} \int_{\mathbb{S}^{d-1}} S^{d-1}_{k_n}(r)  \tilde{z}^2 \Big )^{1/2},\end{align*}
which can be made arbitrarily small if $M$ is sufficiently large.

As a consequence,

$$(III) = \int_{\mathbb{S}^{d}(k_{n})\setminus B_{1}}   \psi_{0}\tilde{z}  \to 0.$$

Passing to the limit, we conclude that

$$ \int_{\mathbb{R}^{d}\setminus B_{1}}   \psi_{0}\tilde{z} =0.$$

This, together with step 3 and the inequality $\tau_2 \leq 0$, gives a contradiction with Proposition \ref{Pr26} (ii).

\bigskip The proof of iii) follows the same arguments as that of ii). In some places the computations are easier since there are no boundary terms. The details are left to the reader.

\end{proof}

Let us now define the quadratic form associated to $H_{k,\lambda}$, namely:
\[ {J_{k,\lambda}: C^{k,\alpha}_{G,0}(\mathbb{S}^{d-1}) \to \R, \ J_{k,\lambda} (v) = \int_{\mathbb{S}^{d-1}}v H_{k, \lambda}(v). }\]
In this case also, sometimes we will drop the subindices $k,\lambda$.
Let us also denote the first eigenvalue of the operator $H_{k, \lambda}$ as
 \[\sigma_{1}(H_{k, \lambda})=\inf\Bigg\{J_{k,\lambda}(v ): v\in C^{k,\alpha}_{G,0}(\mathbb{S}^{d-1})~,~~\int_{\mathbb{S}^{d-1}}v^{2}=1\Bigg\}.\]
By the divergence formula, one can get
\[J_{k,\lambda} (v)=\frac{1}{\lambda}Q_{k, \lambda}(\psi_{v}).\]

\medskip

Next lemma characterizes the eigenvalue $\sigma_{1}(H_{k, \lambda})$ in terms of the quadratic form $Q_{k,\lambda}$.
\begin{lemma} \label{le43}
For any $\lambda\in[\lambda_{0},\lambda_{1}]$, we have
\[\sigma_{1}(H_{k, \lambda})= \min \Bigg\{\frac{1}{\lambda}Q_{k, \lambda}(\psi): \psi\in E_{k, \lambda},~\int_{\partial B_1}\psi^{2}=1\Bigg\}.\]  Moreover the infimum is attained.
 \end{lemma}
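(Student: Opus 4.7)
The plan is to exploit two facts stated just before the lemma: the identity $J_{k,\lambda}(v) = \tfrac{1}{\lambda} Q_{k,\lambda}(\psi_v)$, and the variational characterization that $\psi_v$ from Lemma~\ref{le29} minimizes $Q_{k,\lambda}$ among $H^1_G$-functions with boundary trace $v$ that are orthogonal to $z_{k,\lambda}$. First, I would verify that whenever $v \in C^{2,\alpha}_{G,0}(\mathbb{S}^{d-1})$, the corresponding $\psi_v$ automatically lies in $E_{k,\lambda}$: the zero-mean condition $\int_{\partial B_1}\psi_v = 0$ is inherited from $v$, and the orthogonality $\int \psi_v z_{k,\lambda} = 0$ was already proved in Lemma~\ref{Le31}. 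Hence the identity $J_{k,\lambda}(v) = \lambda^{-1} Q_{k,\lambda}(\psi_v)$ shows that every admissible $v$ in the definition of $\sigma_1(H_{k,\lambda})$ produces an admissible $\psi = \psi_v$ for the right-hand-side infimum with the same Rayleigh quotient, yielding the inequality $\sigma_1(H_{k,\lambda}) \geq \inf\{\lambda^{-1} Q_{k,\lambda}(\psi) : \psi \in E_{k,\lambda},\ \int_{\partial B_1} \psi^2 = 1\}$.

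For the reverse inequality, which is the crux, I would take any $\psi \in E_{k,\lambda}$ and let $v = \psi|_{\partial B_1} \in C^{2,\alpha}_{G,0}(\mathbb{S}^{d-1})$. Decomposing $\psi = \psi_v + \eta$ with $\eta \in H^1_{0,G}(\mathbb{S}^d(k)\setminus B_1)$, the orthogonality of both $\psi$ and $\psi_v$ to $z_{k,\lambda}$ forces $\eta \in E^D_{k,\lambda}$. Expanding quadratically,
\begin{equation*}
Q_{k,\lambda}(\psi) = Q_{k,\lambda}(\psi_v) + 2\,B_{k,\lambda}(\psi_v,\eta) + Q^D_{k,\lambda}(\eta),
\end{equation*}
where $B_{k,\lambda}$ is the polarization of $Q_{k,\lambda}$. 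A standard integration by parts, using $L^D\psi_v = 0$ in the interior and $\eta|_{\partial B_1} = 0$ on the boundary, shows that the cross term vanishes. Proposition~\ref{Pr25}(iii) then gives $Q^D_{k,\lambda}(\eta) \geq 0$, so $Q_{k,\lambda}(\psi) \geq Q_{k,\lambda}(\psi_v)$. Since $\psi$ and $\psi_v$ share the same trace on $\partial B_1$, their $L^2(\partial B_1)$-norms coincide, and the Rayleigh quotient of $\psi$ dominates that of $\psi_v$, which by the identity equals $\lambda J_{k,\lambda}(v)/\|v\|^2 \geq \lambda \sigma_1(H_{k,\lambda})$ (with the normalization conventions fixed by the identification of $\partial B_1$ with $\mathbb{S}^{d-1}$).

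For attainment I would apply the direct method: a minimizing sequence $\{\psi_n\} \subset E_{k,\lambda}$ with $\int_{\partial B_1}\psi_n^2 = 1$ is bounded in $H^1_G(\mathbb{S}^d(k)\setminus B_1)$, thanks to a G\aa{}rding-type lower bound for $Q_{k,\lambda}$ combined with the trace estimate from Lemma~\ref{Le32} and the $L^\infty$-bound of $u_{k,\lambda}$. Rellich--Kondrachov compactness yields strong $L^2$ convergence both in the bulk and on $\partial B_1$; the limit remains in the closed subspace $E_{k,\lambda}$, preserves the normalization, and by weak lower semicontinuity of $Q_{k,\lambda}$ realizes the infimum, which is therefore a minimum. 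The main technical point I expect is the vanishing of the cross term together with the correct bookkeeping of the $L^2(\mathbb{S}^{d-1})$ versus $L^2(\partial B_1)$ normalizations (which differ by the factor $S_k(1)^{d-1}$); once this is absorbed consistently into the definitions, the rest reduces to routine variational and compactness arguments.
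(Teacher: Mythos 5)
Your two-inequality strategy is genuinely different from the paper's. The paper proves attainment of the right-hand infimum first --- by showing any minimizing sequence is bounded via a contradiction argument: normalize $\xi_m = \psi_m/\|\psi_m\|_{H^1}$, pass to a weak limit $\xi_0$, observe that $\int_{\partial B_1}\xi_m^2 = 1/\|\psi_m\|^2 \to 0$ forces $\xi_0 \in H^1_{0,G}$ while orthogonality to $z_{k,\lambda}$ is preserved so $\xi_0 \in E^D_{k,\lambda}$, and then invoke $Q^D_{k,\lambda}(\xi_0) > 0$ from Proposition~\ref{Pr25}(iii) to get a contradiction in both cases $\xi_0 = 0$ and $\xi_0 \neq 0$ --- and then uses Lagrange multipliers to show the minimizer solves a Robin-type boundary eigenvalue problem, hence is a regular $\psi_v$, which identifies the minimum with $J_{k,\lambda}(v) \geq \sigma_1(H_{k,\lambda})$. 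Your decomposition $\psi = \psi_v + \eta$ with the vanishing cross term is a clean and correct alternative route to the same reverse inequality, and you are right that $\eta \in E^D_{k,\lambda}$ follows from Lemma~\ref{Le31}. The one caveat there: for a general $\psi \in E_{k,\lambda}$ the trace $v = \psi|_{\partial B_1}$ lies only in $H^{1/2}_G$, not in $C^{2,\alpha}_{G,0}(\mathbb{S}^{d-1})$, so $\psi_v$ (and hence $J_{k,\lambda}(v)$) is not directly supplied by Lemma~\ref{le29}; a density or approximation step is needed to close this.

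The genuine gap is in the attainment argument. A G\aa{}rding-type bound combined with the trace estimate of Lemma~\ref{Le32} and the $L^\infty$-bound of $u_{k,\lambda}$ gives, after Young's inequality, at best $Q_{k,\lambda}(\psi) \geq \tfrac{\lambda}{2}\|\nabla\psi\|_{L^2}^2 - c\|\psi\|_{L^2}^2$. This neither proves the infimum is finite nor bounds a minimizing sequence: the constraint $\int_{\partial B_1}\psi_n^2 = 1$ controls only the trace, while $\|\psi_n\|_{L^2}$ in the bulk could in principle diverge, and then the G\aa{}rding estimate leaves $\|\psi_n\|_{H^1}$ unbounded. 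Ruling this out is exactly where the paper uses Proposition~\ref{Pr25}(iii) again: after normalizing by $\|\psi_n\|_{H^1}$ the weak limit has zero trace and, by the preserved orthogonality to $z_{k,\lambda}$, lies in $E^D_{k,\lambda}$, so $Q^D_{k,\lambda}$ of the limit is strictly positive and $Q_{k,\lambda}(\psi_n)\to+\infty$ follows. You already invoke $Q^D_{k,\lambda} > 0$ for the cross-term step; you need it here as well, and the G\aa{}rding-plus-trace phrasing you give will not carry the boundedness on its own.
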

\begin{proof}
Let us define
\begin{equation}\label{eq323}
  \eta:=\inf\Bigg\{Q_{k, \lambda}(\psi):\psi\in E_{k,\lambda},\int_{\partial B_1}\psi^{2}= 1 \Bigg\} \in [-\infty, +\infty).
\end{equation}
We first show that $\eta$ is achieved. On that purpose, let us take $\psi_{m}\in E_{k,\lambda}$ such that $Q_{k, \lambda}(\psi_m)\rightarrow\eta .$
We claim that $\psi_{m}$ is bounded. By contradiction, if $\|\psi_{m}\|_{H_{G}^1}\rightarrow +\infty$, we define $\xi_{m}=\|\psi_{m}\|_{H_{G}^1}^{-1}\psi_{m};$ we can suppose that up to a subsequence $\xi_{m}\rightharpoonup \xi_{0}.$ Notice that $\int_{\partial B_1}\xi_{m}^{2}\rightarrow0,$ which yields that $\xi_{0}\in H^{1}_{0,G}(\mathbb{S}^{d}(k)\setminus B_{1}).$ We also point out that
\[\int_{\mathbb{S}^{d}(k)\setminus B_{1}}u_{k,\lambda}^{p-1}\xi_{m}^{2}\rightarrow\int_{\mathbb{S}^{d}(k)\setminus B_{1}}u_{k,\lambda}^{p-1}\xi_{0}^{2}\,.\]
Let us distinguish two cases:\\
\textbf{Case 1:} $\xi_{0}=0.$ In this case
\[Q_{k, \lambda}(\psi_m)=\|\psi_{m}\|_{H_{G}^1}^{2}\int_{\mathbb{S}^{d}(k)\setminus B_{1}}\big(\lambda|\nabla\xi_{m}|^{2}+\xi_{m}^{2}-pu_{k,\lambda}^{p-1}\xi_{m}^{2}\big)-\lambda\frac{(d-1)k}{\tan(k)}\rightarrow+\infty\,,\]
which is impossible.\\
\textbf{Case 2:} $\xi_{0}\neq 0.$ In this case
\begin{align*}
\mathop {\liminf}\limits_{m\rightarrow\infty}Q_{k, \lambda}(\psi_m)&=\mathop {\liminf}\limits_{m\rightarrow\infty}
\|\psi_{m}\|_{H_{G}^1}^{2}\int_{\mathbb{S}^{d}(k)\setminus B_{1}}\big(\lambda|\nabla\xi_{m}|^{2}+\xi_{m}^{2}-pu_{k,\lambda}^{p-1}\xi_{m}^{2}\big)-\lambda\frac{(d-1)k}{\tan(k)}\\
&\geq\mathop {\liminf}\limits_{m\rightarrow\infty}\|\psi_{m}\|_{H_{G}^1}^{2}Q_{k, \lambda}^D(\xi_{0})-\lambda\frac{(d-1)k}{\tan(k)}\,,
\end{align*}
but $Q_{k, \lambda}^D(\xi_{0})>0$ for $\lambda\in[\lambda_{0},\lambda_{1}]$ by the Proposition \ref{Pr25}. This is again a contradiction.

Thus, $\psi_{m}$ is bounded, so up to a subsequence we can pass to the weak limit $\psi_{m}\rightharpoonup\psi.$ Then, $\psi$ is a minimizer for $Q_{k, \lambda}$  and in particular $\eta>-\infty.$

\medskip

By the Lagrange multiplier rule, there exist real numbers $\theta_{0},\theta_{1},\theta_{2}$  so that for any $\rho\in H_{G}^{1}(\mathbb{S}^{d}(k)\setminus B_{1})$,
\[\int_{\mathbb{S}^{d}(k)\setminus B_{1}}\left(\lambda\nabla\psi\nabla\rho+\psi\rho-pu_{k,\lambda}^{p-1}\psi\rho -\theta_{0} \rho z_{k,\lambda}\right)=\int_{\partial B_1}\rho((\theta_{1}+c)\psi+\theta_{2}),\]
where $c=\lambda\frac{(d-1)k}{\tan(k)}.$ Taking $\rho=z_{k,\lambda}$ above we conclude that $\theta_{0}=0$. Moreover, if we take $\rho=\psi$ and $\rho=\vartheta$ (given by (\ref{eq355})), we conclude that $\theta_{1}+c=\eta$ and $\theta_{2}=0$, respectively. In other words, $\psi$
 is a (weak) solution of
\begin{equation}\label{eq325}
  \begin{cases}
  -\lambda\Delta\psi+\psi-pu_{k,\lambda}^{p-1}\psi=0 &\mbox{in $\mathbb{S}^{d}(k)\setminus B_{1}$\,, }\\
  \partial_{\nu}\psi=\eta\psi &\mbox{on $\partial B_1$\,.}
  \end{cases}
\end{equation}
By the regularity theory, $\psi\in C^{2,\alpha}_{G}(\mathbb{S}^{d}(k)\setminus B_{1})$. Define $v = \psi|_{\partial B_1}$ and $\psi\in C^{2,\alpha}_{G}(\mathbb{S}^{d}(k)\setminus B_{1})\cap E_{k,\lambda}$ by the Lemma \ref{Le31}. Observe that:
$$
\int_{\mathbb{S}^{d-1}} v^2 = 1, \ J_{k,\lambda}(v)= \frac{1}{\lambda} Q_{k,\lambda}(\psi)= \frac{1}{\lambda} (\eta-c)\,.
$$
The proof is completed.
\end{proof}

For any $k \in (0, k_0)$, we define
\begin{equation}\label{eq401}
\lambda_{\ast}(k):=\sup\left\{\lambda\in[\lambda_{0},\lambda_{1}]:Q_{k, \lambda}(\psi)< 0 ~\mbox
{for some}~\psi\in E_{k, \lambda}\right\}.
\end{equation}
The above set is non empty since $\lambda_0 $ belongs to it, so the supremum is well defined. It is clear that $\lambda_{0}<\lambda_{\ast}(k)<\lambda_{1}$ by the Proposition \ref{Pr25}. We now can state the main result of this section as the following:
\begin{proposition} \label{Pr41}
We have:
\begin{itemize}
		\item[(i)] if $\lambda=\lambda_{1},$ then $\sigma_{1}(H_{k, \lambda})>0;$
        \item[(ii)] if $\lambda \geq \lambda_{\ast}(k),$ then $\sigma_{1}(H_{k, \lambda}) \geq 0;$
        \item[(iii)] if $\lambda = \lambda_{\ast}(k),$ then $\sigma_{1}(H_{k, \lambda}) = 0;$
		\item[(iv)] for any $\e>0$ there exists $\lambda \in (\lambda_{\ast}- \e, \lambda_{\ast}),$ with $\sigma_{1}(H_{k, \lambda})<0.$
	\end{itemize}
 \end{proposition}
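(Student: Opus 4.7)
The overall strategy is to reduce every assertion to Lemma \ref{le43}, which recasts $\sigma_1(H_{k,\lambda})$ as the minimum of $\frac{1}{\lambda}Q_{k,\lambda}(\psi)$ over $\psi \in E_{k,\lambda}$ with $\int_{\partial B_1}\psi^2 = 1$, and to combine this with the sign information on $Q_{k,\lambda}$ furnished by Proposition \ref{Pr25} and the definition \eqref{eq401} of $\lambda_*(k)$. A background ingredient that I will use throughout is the continuity of $\lambda \mapsto \sigma_1(H_{k,\lambda})$ on $[\lambda_0,\lambda_1]$, which follows from the smooth dependence of $u_{k,\lambda}$ on $\lambda$ (by the Implicit Function Theorem argument in Proposition \ref{Pr21}), the corresponding continuous dependence of $z_{k,\lambda}$ (via standard perturbation theory for the isolated eigenvalue $\tau_{k,\lambda}$), and the variational characterization of $\sigma_1$.

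For part (i), I will simply observe that for any nonzero $\psi \in E_{k,\lambda_1}$, setting $\hat\psi = \psi/\|\psi\|_{L^2}$ gives $\hat\psi \in E_{k,\lambda_1}$ with $\|\hat\psi\|_{L^2}=1$, so Proposition \ref{Pr25}(ii) yields $Q_{k,\lambda_1}(\psi) = \|\psi\|_{L^2}^2 Q_{k,\lambda_1}(\hat\psi) \geq \varepsilon \|\psi\|_{L^2}^2 > 0$. Normalizing instead by $\int_{\partial B_1}\psi^2 = 1$ and invoking Lemma \ref{le43} gives $\sigma_1(H_{k,\lambda_1}) > 0$.

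For part (ii), if $\lambda > \lambda_*(k)$, then $\lambda$ is not in the set in \eqref{eq401}, which means $Q_{k,\lambda}(\psi) \geq 0$ for every $\psi \in E_{k,\lambda}$; Lemma \ref{le43} then gives $\sigma_1(H_{k,\lambda}) \geq 0$. For the boundary value $\lambda = \lambda_*(k)$, the inequality passes to the limit by the continuity of $\sigma_1$ stated above. Part (iv) is proved by contradiction: if $\sigma_1(H_{k,\lambda}) \geq 0$ on $(\lambda_*-\varepsilon,\lambda_*)$, then by Lemma \ref{le43} we would have $Q_{k,\lambda} \geq 0$ on $E_{k,\lambda}$ for all such $\lambda$, contradicting the definition of supremum. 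The only delicate point is that the $\psi$ provided by the supremum definition might \emph{a priori} vanish on $\partial B_1$, in which case normalization by $\int_{\partial B_1}\psi^2$ is impossible; but then $\psi \in E^D_{k,\lambda}$ and Proposition \ref{Pr25}(iii) yields $Q_{k,\lambda}(\psi) = Q^D_{k,\lambda}(\psi) > 0$, contradicting the choice of $\psi$. Hence $\int_{\partial B_1}\psi^2 > 0$, we can renormalize, and Lemma \ref{le43} gives $\sigma_1(H_{k,\lambda}) < 0$.

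Finally, part (iii) follows by combining (ii), which yields $\sigma_1(H_{k,\lambda_*(k)}) \geq 0$, with (iv), which by continuity of $\sigma_1$ in $\lambda$ yields $\sigma_1(H_{k,\lambda_*(k)}) \leq 0$. The main (mild) obstacle is the continuity statement for $\sigma_1$: although intuitive, it requires checking that the eigenfunction $z_{k,\lambda}$ (whose orthogonality defines the constraint space $E_{k,\lambda}$) depends continuously on $\lambda$, so that the minimization problem in Lemma \ref{le43} is continuous in the parameter. This is standard once $\tau_{k,\lambda}$ is known to be a simple isolated eigenvalue of $L^D_{k,\lambda}$, which is part of Proposition \ref{nondeg D}.
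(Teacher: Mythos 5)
Your proposal is correct and takes essentially the same route as the paper; the paper's own proof is a one-sentence deduction from Lemma \ref{le43}, Proposition \ref{Pr25}, and the definition (\ref{eq401}), and your write-up simply fills in those implications in the natural way. Two small points worth flagging: in (iv), the contradiction ``$Q_{k,\lambda}\geq 0$ on $E_{k,\lambda}$ for all $\lambda\in(\lambda_*-\varepsilon,\lambda_*)$ contradicts the definition of supremum'' is not literally immediate, since the supremum could a priori be attained at $\lambda_*$ itself; you need to couple this with (ii) (which, via the normalization step you already describe, excludes $\lambda_*$ from the set in (\ref{eq401})) to get the genuine contradiction. And the continuity of $\lambda\mapsto\sigma_1(H_{k,\lambda})$, which you invoke for (ii) at the endpoint and again for (iii), is used but not verified in the paper either; your sketch (smooth dependence of $u_{k,\lambda}$, perturbation of the simple isolated eigenvalue $\tau_{k,\lambda}$ to get continuity of $z_{k,\lambda}$, hence of the constraint space and of the min) is the right way to close that gap.
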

\begin{proof} This proposition follows at once from the Lemma \ref{le43}, Proposition \ref{Pr25} and from the definition of $\lambda_{\ast}$ given by (\ref{eq401}).
 \end{proof}

\section{Bifurcation argument}
\label{Section 6}

In this section, we are ready to prove our main result, Theorem ~\ref{Th11}, by the bifurcation argument. For the sake of completeness, we will now recall the Krasnoselskii bifurcation theorem. For the proof and for many other applications we refer to ~\cite{K04,S94}.

\begin{theorem} \label{Th401} \textbf{\mbox{(Krasnoselskii Bifurcation Theorem)}}
Let $\mathcal{Y}$ be a Banach space, and let $\mathcal{W}\subset \mathcal{Y}$ and $\Gamma\subset\mathbb{R}$ be open sets, where we assume $0\in\mathcal{W}$. Denote the elements of $\mathcal{W}$ by $w$ and the elements of $\Gamma$ by $\lambda$. Let $F:\mathcal{W}\times\Gamma\rightarrow \mathcal{Y}$ be a $C^{1}$ operator such that
\begin{itemize}
  \item[i)] $F(0,\lambda)=0$ for all $\lambda\in\Gamma;$
  \item[ii)] $F(w,\lambda)=w-K(w,\lambda)$, where $K(w,\lambda)$ is a compact map;
  \item[iii)] We denote by $i(\lambda) $ the index of $D_{w}F(0,\lambda)$, i.e., the sum of the multiplicities of all negative eigenvalues of $D_{w}F(0,\lambda)$. Then, there exists $\bar{\lambda} <\hat{\lambda}$ such that:
  \begin{enumerate}
  	\item $D_{w}F(0,\bar{\lambda})$, $D_{w}F(0,\hat{\lambda})$ are non degenerate.
  	\item $i(\bar{\lambda})$ and $i(\hat{\lambda})$ have different parity.
  \end{enumerate}

\end{itemize}
Then there exists $\lambda_{*} \in (\bar{\lambda}, \hat{\lambda})$ a bifurcation point for $F(w,\lambda)=0$ in the following sense: $(0,\lambda_{*})$ is a cluster point of nontrivial solutions $(w,\lambda)\in \mathcal{Y}\times\mathbb{R},w\neq0,$ of $F(w,\lambda)=0.$
  \end{theorem}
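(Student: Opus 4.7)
The plan is to prove the Krasnoselskii bifurcation theorem by a standard Leray--Schauder degree argument. Since $F(w,\lambda)=w-K(w,\lambda)$ with $K$ compact, the Leray--Schauder degree $\deg_{LS}(F(\cdot,\lambda),B_\epsilon(0),0)$ is well defined whenever $0$ is the only zero of $F(\cdot,\lambda)$ on $\partial B_\epsilon(0)$. The strategy is to argue by contradiction: if no bifurcation point exists in $(\bar\lambda,\hat\lambda)$, then a uniform annular separation $F(w,\lambda)\neq 0$ for $w\in\partial B_\epsilon$ and $\lambda\in[\bar\lambda,\hat\lambda]$ can be arranged, and homotopy invariance forces the degrees at the two endpoints to coincide, contradicting the parity hypothesis.

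The first key step is the linearization formula: when $D_wF(0,\lambda)$ is a compact perturbation of the identity and invertible, the local degree near $0$ equals
\begin{equation*}
\deg_{LS}(F(\cdot,\lambda),B_\epsilon(0),0)=(-1)^{i(\lambda)},
\end{equation*}
for $\epsilon>0$ small enough, where $i(\lambda)$ is the sum of the multiplicities of the negative eigenvalues of $D_wF(0,\lambda)$. This is classical (cf.~Krasnoselskii): writing $F(w,\lambda)=D_wF(0,\lambda)w+o(\|w\|)$, a straight-line homotopy to the linear map is admissible on a small sphere, and the degree of an invertible compact perturbation of the identity is $(-1)$ raised to the total algebraic multiplicity of the real eigenvalues of $D_wK(0,\lambda)$ lying in $(1,+\infty)$, which is exactly $i(\lambda)$. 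Apply this at both $\bar\lambda$ and $\hat\lambda$, where non-degeneracy is assumed.

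For the second step, suppose toward contradiction that $F$ has no bifurcation point in $(\bar\lambda,\hat\lambda)$. Then for each $\lambda\in[\bar\lambda,\hat\lambda]$ there exist $\epsilon_\lambda>0$ and an open interval $I_\lambda\ni\lambda$ such that $F(w,\mu)\neq 0$ for all $0<\|w\|\le \epsilon_\lambda$ and all $\mu\in I_\lambda$ (at the endpoints this follows from the linearization formula and isolation of the zero; at interior points it is the very negation of bifurcation). By compactness of $[\bar\lambda,\hat\lambda]$, extract a finite subcover and set $\epsilon=\min_i\epsilon_{\lambda_i}>0$; then $F(w,\lambda)\neq 0$ on $\partial B_\epsilon\times[\bar\lambda,\hat\lambda]$. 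Homotopy invariance of the Leray--Schauder degree (applicable because $K(\cdot,\cdot)$ is jointly compact on the closure of $B_\epsilon\times[\bar\lambda,\hat\lambda]$) then gives
\begin{equation*}
(-1)^{i(\bar\lambda)}=\deg_{LS}(F(\cdot,\bar\lambda),B_\epsilon,0)=\deg_{LS}(F(\cdot,\hat\lambda),B_\epsilon,0)=(-1)^{i(\hat\lambda)},
\end{equation*}
contradicting the different parity of $i(\bar\lambda)$ and $i(\hat\lambda)$. Hence some $\lambda_*\in(\bar\lambda,\hat\lambda)$ is a cluster point of nontrivial solutions.

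I expect the main obstacle to be the verification of the index formula $\deg_{LS}=(-1)^{i(\lambda)}$, which requires a careful reduction to a finite-dimensional Brouwer degree computation on the generalized eigenspaces associated with real eigenvalues of $D_wK(0,\lambda)$ greater than $1$; and, to a lesser extent, the compactness argument yielding a uniform $\epsilon>0$, which relies on the joint continuity of $F$ and the compactness of $K$ on $\overline{B_\epsilon}\times[\bar\lambda,\hat\lambda]$. Both are standard but are the load-bearing steps of the proof.
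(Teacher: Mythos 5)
Your argument is correct and is the standard Leray--Schauder degree proof of the Krasnoselskii theorem. The paper itself does not prove Theorem~\ref{Th401}: it only states it and refers to \cite{K04,S94} for the proof, with \cite[Remark 6.3]{RRS20} for the slightly non-standard hypothesis used here (two non-degenerate values of opposite index parity instead of an isolated degenerate value). Your two load-bearing steps---the linearization formula $\deg_{LS}(F(\cdot,\lambda),B_\epsilon,0)=(-1)^{i(\lambda)}$ at the non-degenerate endpoints, and the compactness/homotopy-invariance contradiction if no bifurcation point exists in $(\bar\lambda,\hat\lambda)$---are exactly the two halves of the classical argument those references contain, and you have correctly matched the index $i(\lambda)$ (negative eigenvalues of $D_wF(0,\lambda)=I-D_wK(0,\lambda)$) with the Leray--Schauder count (eigenvalues of $D_wK(0,\lambda)$ exceeding $1$). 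One point worth making explicit, since hypothesis~(ii) is phrased loosely, is that for the homotopy step you need $K$ compact jointly in $(w,\lambda)$ on $\overline{B_\epsilon}\times[\bar\lambda,\hat\lambda]$, not merely compact in $w$ for each fixed $\lambda$; you do invoke this, and it is the intended reading, but a fully rigorous write-up should flag it as the interpretation of~(ii) being used.
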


\begin{remark} Let us point out that the above version of the Krasnoselskii theorem is not the standard one, as usually one imposes the existence of an isolated point where $D_wF(0, \lambda)$ is degenerate. However, the proof of the theorem works equally well for the version stated above. See \cite[Remark 6.3]{RRS20} on this regard.
\end{remark}

    In order to prove our result, we reformulate the problem. For this aim, we need the following lemma:
     \begin{lemma} \label{le42}
    There exists $\epsilon>0$ such that for any $\lambda\in(\lambda_{*}(k)-\epsilon,\lambda_{1}),$ the operator
     \[H_{k, \lambda}+Id:C^{2,\alpha}_{G,0}(\S^{ d-1})\rightarrow C^{1,\alpha}_{G,0}(\S^{ d-1}),\,v\mapsto H_{k, \lambda}(v)+v, \]
     is invertible.
     \end{lemma}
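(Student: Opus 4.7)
The plan is to combine a Fredholm-type argument for the operator on Hölder spaces with an injectivity check that uses the variational characterization of $\sigma_{1}(H_{k,\lambda})$ from Lemma \ref{le43} together with Proposition \ref{Pr41}.

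First I would observe that $H_{k,\lambda}$ is symmetric on $L^{2}(\mathbb{S}^{d-1})$. Given $v,w\in C^{2,\alpha}_{G,0}(\mathbb{S}^{d-1})$, let $\psi_{v},\psi_{w}$ be the extensions provided by Lemma \ref{le29}. Multiplying the equation for $\psi_{v}$ by $\psi_{w}$ (and viceversa), integrating by parts on $\mathbb{S}^{d}(k)\setminus B_{1}$, and subtracting, the bulk terms cancel and one obtains $\int_{\partial B_{1}}w\,\partial_{\nu}\psi_{v}=\int_{\partial B_{1}}v\,\partial_{\nu}\psi_{w}$. After correcting by the constant $\frac{(d-1)k}{\tan(k)}$, this is exactly $\int vH_{k,\lambda}(w)=\int wH_{k,\lambda}(v)$, which is also the polarization of $J_{k,\lambda}$.

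Second, I would argue that $H_{k,\lambda}+\mathrm{Id}\colon C^{2,\alpha}_{G,0}(\mathbb{S}^{d-1})\to C^{1,\alpha}_{G,0}(\mathbb{S}^{d-1})$ is Fredholm of index zero. The map $H_{k,\lambda}$ is, up to an additive constant times the identity, the Dirichlet-to-Neumann operator attached to the linearized Dirichlet problem \eqref{eq23}; by the nondegeneracy guaranteed in Proposition \ref{nondeg D}, this operator is well defined and is a first-order elliptic pseudo-differential operator on $\mathbb{S}^{d-1}$, hence Fredholm of index zero between the indicated Hölder spaces. Since the identity embedding $C^{2,\alpha}_{G,0}(\mathbb{S}^{d-1})\hookrightarrow C^{1,\alpha}_{G,0}(\mathbb{S}^{d-1})$ is compact by Ascoli-Arzelà, $H_{k,\lambda}+\mathrm{Id}$ inherits the Fredholm property and index zero. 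Consequently, invertibility reduces to injectivity.

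Third, suppose for contradiction that $v\in C^{2,\alpha}_{G,0}(\mathbb{S}^{d-1})$ is non-zero with $H_{k,\lambda}(v)+v=0$. Pairing in $L^{2}(\mathbb{S}^{d-1})$ with $v$ and using Lemma \ref{le43} we obtain
$$\sigma_{1}(H_{k,\lambda})\,\|v\|_{L^{2}}^{2}\leq J_{k,\lambda}(v)=\int_{\mathbb{S}^{d-1}}v\,H_{k,\lambda}(v)=-\|v\|_{L^{2}}^{2},$$
so necessarily $\sigma_{1}(H_{k,\lambda})\leq -1$. To preclude this I would invoke Proposition \ref{Pr41}: for $\lambda\in[\lambda_{\ast}(k),\lambda_{1}]$ one has $\sigma_{1}(H_{k,\lambda})\geq 0>-1$, while at $\lambda=\lambda_{\ast}(k)$ the first eigenvalue vanishes. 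Since $u_{k,\lambda}$ depends continuously on $\lambda$ (Proposition \ref{Pr21}), the quadratic form $Q_{k,\lambda}$, and hence $\sigma_{1}(H_{k,\lambda})$ by its min-max characterization, is continuous in $\lambda$. Thus there is $\epsilon>0$ so that $\sigma_{1}(H_{k,\lambda})>-1$ on the whole interval $(\lambda_{\ast}(k)-\epsilon,\lambda_{1})$, contradicting the previous inequality.

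The step I expect to be the most delicate is the Fredholmness in Hölder spaces. To keep the argument self-contained one can avoid invoking pseudo-differential theory by re-expressing the equation $H_{k,\lambda}(v)+v=f$ as an auxiliary elliptic boundary-value problem on $\mathbb{S}^{d}(k)\setminus B_{1}$ with a Robin-type condition, for which Schauder theory and the Fredholm alternative supply the desired index-zero conclusion directly.
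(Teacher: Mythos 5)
Your proposal is correct, and it reaches the same conclusion via a genuinely different mechanism for the surjectivity part. The paper shows invertibility by proving that the shifted quadratic form $\hat{J}_{k,\lambda}(v) = J_{k,\lambda}(v) + \|v\|_{L^2}^2$ is coercive on the mean-zero part of $H^{1/2}_G(\mathbb{S}^{d-1})$ (the coercivity of $\hat{Q}_{k,\lambda} = Q_{k,\lambda} + \lambda\int_{\partial B_1}\psi^2$ on $E_{k,\lambda}$ is established by a weak-limit contradiction argument), and then applies Lax--Milgram plus Schauder regularity. You instead appeal to the fact that the Dirichlet-to-Neumann map is an elliptic first-order pseudo-differential operator of index zero, so that adding the compact inclusion $\mathrm{Id}:C^{2,\alpha}_{G,0}\hookrightarrow C^{1,\alpha}_{G,0}$ preserves Fredholmness and index, reducing invertibility to injectivity; you then deduce injectivity from $\sigma_1(H_{k,\lambda}) > -1$ exactly as the paper does (the paper embeds the same fact in positive-definiteness of $\hat{J}_{k,\lambda}$). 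The trade-off is clear: your route is conceptually shorter once index zero is granted, but it leans on pseudo-differential theory (or, as you note, an auxiliary Robin boundary-value problem handled by Schauder estimates and the Fredholm alternative, which is essentially the machinery the paper's Lemma~\ref{le43} and Lemma~\ref{le29} already set up), while the paper's Lax--Milgram argument is more self-contained at the cost of a somewhat longer coercivity proof. One small caveat, common to both your write-up and the paper: the assertion that $\sigma_1(H_{k,\lambda}) > -1$ for all $\lambda$ in a small left neighborhood of $\lambda_*(k)$ tacitly uses continuity of $\sigma_1$ in $\lambda$; you at least gesture at it via the min-max characterization and continuous dependence of $u_{k,\lambda}$, whereas the paper states it without comment.
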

     \begin{proof}
 { Observe that, by choosing $\epsilon>0$ sufficiently small, we can assume that $\sigma_1(H_{k, \lambda}) > -1$.} 
     We define the quadratic forms $\hat{Q}_{k,\lambda}:E_{k,\lambda}\rightarrow\mathbb{R}$ by
     \[\hat{Q}_{k,\lambda}(\psi)=Q_{k,\lambda}(\psi)+\lambda\int_{\partial B_{1}}\psi^{2},\]
    and $\hat{J}_{k,\lambda}:C^{2,\alpha}_{G,0}(\S^{ d-1})\rightarrow\mathbb{R}$ by
     \[\hat{J}_{k,\lambda}(v)= \int_{\S^{ d-1}} \left( v\partial_{\nu}\psi_{v}-\frac{(d-1)k}{\tan k}v^2+ v^{2}\right).\]
      Actually, these quadratic forms are positive definite since $\sigma_1>-1$. We state that they are also coercive. Let's first show that
     \[\beta:=\inf\{\hat{Q}_{k,\lambda}(\psi):\, \psi\in E_{k,\lambda},\,\|\psi\|=1\}>0\]
     is achieved. On that purpose, take $\psi_{n}\in E_{k,\lambda}$ with $\|\psi_{n}\|_{H_{G}^1}=1$ such that $\hat{Q}_{k,\lambda}(\psi_{n})\rightarrow\beta,$ and suppose that $\psi_{n}\rightharpoonup\psi_{0}.$ If the convergence is strong, then the infimum $\beta$ is attained, which implies that $\beta>0.$ Otherwise,
    \begin{align*}
    \beta &=\mathop{\limsup}\limits_{n\rightarrow\infty}\int_{\S^{d}(k)\setminus B_{1}}\big(\lambda|\nabla\psi_{n}|^{2}
     +\psi_{n}^{2}-pu_{k,\lambda}^{p-1}\psi_{n}^{2}\big)-\lambda\frac{(d-1)k}{\tan k}\int_{\partial B_{1}}\psi_{n}^{2}+ \lambda\int_{\partial B_{1}}\psi_{n}^{2}\\
       &>\int_{\S^{d}(k)\setminus B_{1}}\big(\lambda|\nabla\psi_{0}|^{2}+\psi_{0}^{2}-pu_{k,\lambda}^{p-1}\psi_{0}^{2}\big)-\lambda\frac{(d-1)k}{\tan k}\int_{\partial B_{1}}\psi_{0}^{2}+\lambda \int_{\partial B_{1}}\psi_{0}^{2}\geq0.
\end{align*}
Therefore, $\hat{Q}_{k,\lambda}$ is coercive. Thus, we can obtain that
\begin{align*}\hat{J}_{k,\lambda} (v) =\frac{1}{\lambda}\hat{Q}_{k,\lambda}(\psi_{v})
\geq c\|\psi_{v}\|^{2}_{H^{1}_{G}(\S^{d}(k)\setminus B_{1})}. 
\end{align*}

Observe that $\hat{J}_{k,\lambda}$ is naturally defined in the space: $$X= \left\{u \in H_G^{1/2}(\S^{ d-1}): \int_{\S^{ d-1}} u =0\right\},$$ where $H_G^{1/2}(\S^{ d-1})$ denotes the Sobolev space of $G$-symmetric functions, i.e.
\[
  H^{1/2}_{G}(\S^{ d-1})=\{v\in H^{1/2}(\S^{ d-1}): \ v(\theta)=v(g(\theta)) \ \forall g\in G\}\,.
\]
By the trace estimate $\hat{J}_{k,\lambda}$ is coercive in $X$. According to the Lax-Milgram theorem, the regularity theory and the fact that the mean property is preserved, the operator
 \[v\mapsto \partial_{\nu}\psi_{v}\big|_{\partial B_{1}}-\frac{(d-1)k}{\tan k}v +  v\]
 is invertible from $C^{2,\alpha}_{G,0}(\S ^{d-1})$ to $C^{1,\alpha}_{G,0}(\S ^{d-1})$.
   \end{proof}

     By the Proposition \ref{Pr41}, we can take $ \bar{\lambda}\in(\lambda_{0},\lambda_{*}(k))$ sufficiently close to $\lambda_{*}(k)$ such that $\sigma_{1}(H_{\bar{\lambda}})<0.$ Define the operator $Z:\mathcal{U} \times [\bar{\lambda},\lambda_{1}] \to \mathcal{V}$ by
   \begin{equation*}
    Z(v,\lambda)=F_k(v,\lambda)+v,
   \end{equation*}
  where $\mathcal{U}\subset C^{2,\alpha}_{G,0}(\S ^{d-1}),\mathcal{V}\subset C^{1,\alpha}_{G,0}(\S ^{d-1})$ are open neighborhoods of $0$. By the Lemma \ref{le42}, taking $\bar{\lambda}$ close enough to $\lambda_{*}(k)$ so that we can assume that $D_{v}Z(0,\lambda)$ is an isomorphism for all $\lambda\in[\bar{\lambda},\lambda_{1}].$ Then, we can further restrict $\mathcal{U}$ and $\mathcal{V}$ so that $Z(\cdot,\lambda)$ is invertible for all $\lambda\in[\bar{\lambda},\lambda_{1}]$ according to the Inverse Function theorem.

     Now define the operator $W:\mathcal{V}\times[\bar{\lambda},\lambda_{1}] \rightarrow C^{1,\alpha}_{G,0}(\S ^{d-1})$ by $W(v,\lambda)=v-\hat{v}$ with $Z(\hat{v},\lambda)=v.$ By the compactness of the inclusion of $ C^{2,\alpha}_{G,0}(\S ^{d-1})$ into $C^{1,\alpha}_{G,0}(\S ^{d-1}),$ we can point out that $W$ is the operator by the sum of an identity and a compact operator. Obviously, $F_k(v,\lambda)=0\Leftrightarrow W(v,\lambda)=0.$ Theorem \ref{Th11} follows if we show the local bifurcation of solutions to the equation $W(v,\lambda)=0.$

     We have
     \[D_{v}W\big|_{(0,\lambda)}(v)=v-D_{v}Z^{-1}\big|_{(0,\lambda)}(v).\]
     Thus
     \[D_{v}W\big|_{(0,\lambda)}(v)=\delta v\Leftrightarrow H_{k, \lambda}(v)=\frac{\delta}{(1-\delta)\partial_r u_{k,\lambda}(1)}v.\]
    Recall from the proof of the Lemma \ref{le42}, we have that $\delta<1$ if $\lambda\geq\bar{\lambda}$. Hence, $D_{v}W(0,\lambda)$ has the same number of negative eigenvalues as $H_{k, \lambda}.$

Under this framework, theorem ~\ref{Th11} follows immediately from the following lemma and the Krasnoselskii theorem.

\begin{lemma} \label{Le51}
The index of the linearized operator $D_{v}W(0,\lambda)$ is odd for some $\lambda < \lambda_{\ast}(k)$ sufficiently close to $\lambda_{\ast}(k).$
 \end{lemma}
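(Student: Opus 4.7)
The plan is to reduce the index computation for $D_v W(0,\lambda)$ to a mode-by-mode spectral analysis of $H_{k,\lambda}$, and then use a monotonicity argument to identify which spherical harmonic mode crosses zero at $\lambda=\lambda_*(k)$. By the relation derived just before the lemma, combined with the facts $\delta<1$ and $\partial_r u_{k,\lambda}(1)>0$ (Hopf lemma), $D_v W(0,\lambda)$ and $H_{k,\lambda}$ have the same number of negative eigenvalues. Therefore it suffices to show that the Morse index of $H_{k,\lambda}$ is odd for some $\lambda<\lambda_*(k)$ close enough to $\lambda_*(k)$.

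Because $u_{k,\lambda}$ is radial, $H_{k,\lambda}$ commutes with the full $O(d)$-action on $\mathbb{S}^{d-1}$, so $C^{2,\alpha}_{G,0}(\mathbb{S}^{d-1})=\bigoplus_{l\geq 1}\mathcal{E}_{i_l}$, where $\mathcal{E}_{i_l}$ denotes the $G$-invariant part of the $\mu_{i_l}$-eigenspace of $\Delta_{\mathbb{S}^{d-1}}$, of dimension $m_l$. For $Y\in\mathcal{E}_{i_l}$, separation of variables in Lemma~\ref{le29} yields $\psi_Y(r,\theta)=f_l(r)\,Y(\theta)$, where $f_l$ solves
\[
-\lambda\!\left[f_l''+(d-1)\tfrac{C_k(r)}{S_k(r)}f_l'-\tfrac{\mu_{i_l}}{S_k(r)^2}f_l\right]+f_l-p\,u_{k,\lambda}^{p-1}f_l=0,\qquad f_l(1)=1,
\]
and is regular at $r=\pi/k$. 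Hence $H_{k,\lambda}|_{\mathcal{E}_{i_l}}$ is multiplication by $\sigma_{i_l}(k,\lambda):=f_l'(1)-\frac{(d-1)k}{\tan k}$, and the Morse index of $H_{k,\lambda}$ equals $\sum_{l:\,\sigma_{i_l}(k,\lambda)<0}m_l$.

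By Proposition~\ref{Pr41}, the minimum $\min_l\sigma_{i_l}(k,\lambda)$ is strictly positive at $\lambda=\lambda_1$, equals zero at $\lambda=\lambda_*(k)$, and is strictly negative just below $\lambda_*(k)$. The key step is to show that this minimum is attained strictly at $l=1$: I would argue by a Sturm-type comparison applied to the mode ODE, whose potential $\lambda\mu_{i_l}/S_k^2(r)+1-pu_{k,\lambda}^{p-1}$ is strictly increasing in $\mu_{i_l}$, to obtain the strict ordering $\sigma_{i_1}(k,\lambda)<\sigma_{i_2}(k,\lambda)<\cdots$ uniformly on $(0,k_0)\times[\lambda_0,\lambda_1]$. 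Consequently only $\sigma_{i_1}$ crosses zero at $\lambda=\lambda_*(k)$, and for $\lambda$ just below $\lambda_*(k)$ the Morse index of $H_{k,\lambda}$ equals exactly $m_1$, which is odd by assumption~(G).

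The hard part will be establishing the uniform strict ordering of the $\sigma_{i_l}(k,\lambda)$ and, in particular, ruling out the accidental simultaneous vanishing of two distinct $\sigma_{i_l}$'s at $\lambda=\lambda_*(k)$. A direct Sturm--Liouville comparison should deliver the pointwise inequality once one verifies, say, that $\partial_\mu f_l'(1)>0$ via the sign of an auxiliary solution; alternatively, a perturbative argument from the $k\to 0$ limit treated in \cite{RRS20}, where the analogous identification of the critical mode is already performed, transfers the parity statement to small positive $k$ and thereby yields the lemma.
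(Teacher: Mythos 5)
Your reduction to $H_{k,\lambda}$, the spherical-harmonic decomposition, and the observation that $H_{k,\lambda}$ acts on each mode space $\mathcal{E}_{i_l}$ by a scalar $\sigma_{i_l}(k,\lambda)$, so that the Morse index is $\sum_{\sigma_{i_l}<0} m_l$ and the kernel at $\lambda=\lambda_*(k)$ is a direct sum of whole mode spaces, are all correct and match the structure of the paper's argument. Where you diverge is in the treatment of the step you flag as ``the hard part,'' namely the strict ordering $\sigma_{i_1}<\sigma_{i_2}<\cdots$, and here you leave a genuine gap: you propose either a Sturm-type comparison on the mode ODE (which you do not carry out, and which would require some work to make rigorous, e.g.\ controlling the sign of an auxiliary Wronskian) or a perturbative transfer from the $k\to 0$ limit (which would only give the conclusion along a sequence $k_n\to 0$, not for every small $k$, and would in any case need uniform separation of the modes to pass to the limit). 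Neither route is completed.

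The paper's proof closes this gap in an essentially trivial way that you do not notice: decomposing $Q_{k,\lambda}$ modewise gives one-dimensional quadratic forms $Q^l_{k,\lambda}$ on radial profiles $\phi$, and these differ \emph{only} by the manifestly positive term $\lambda\,\mu_{i_l}\int_1^{\pi/k}\phi^2 S_k^{d-3}\,dr$, which is strictly increasing in $\mu_{i_l}$. Hence $Q^{l_1}_{k,\lambda}(\phi)<Q^{l_2}_{k,\lambda}(\phi)$ for every nonzero $\phi$ and all $1\le l_1<l_2$, with no need for any comparison theorem or uniformity in $(k,\lambda)$. Combined with $Q_{k,\lambda_*(k)}\ge 0$ on $E_{k,\lambda_*(k)}$, $Q^0_{k,\lambda_*(k)}>0$ on the radial sector (which requires the orthogonality to $z_{k,\lambda}$ built into $E_{k,\lambda}$, a constraint your ODE description of $\sigma_{i_0}$ does not track), and the existence of a nonzero $\psi$ with $Q_{k,\lambda_*(k)}(\psi)=0$, one gets $Q^1_{k,\lambda_*(k)}\equiv 0$ in the infimum sense while $Q^l_{k,\lambda_*(k)}>0$ for $l>1$, so the kernel is exactly $\mathcal{E}_{i_1}$ of dimension $m_1$, odd by (G). In short: the strategy you propose is the right one, but the ordering of the $\sigma_{i_l}$ is not the hard part once you look at the quadratic forms rather than the boundary derivatives $f_l'(1)$ directly, and your proposal as written does not establish it.
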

\begin{proof}
In view of Proposition \ref{Pr41}, it is sufficient to prove that $H_{k,\lambda_{\ast}(k)}$ has odd-dimensional kernel. For any $\psi\in E_{k, \lambda},$ there exist functions $\psi_{0},\psi_{l,j}$ defined in $[1,\frac{\pi}{k})$ such that
\[\psi(r,\theta)=\psi_{0}(r)+\sum_{l=1}^{+\infty}\sum_{j=1}^{m_{l}}\psi_{l,j}(r)\xi_{l,j}(\theta),\]
where $(r,\theta)\in [1,\frac{\pi}{k})\times \mathbb{S}^{d-1},$ and $\xi_{l,j}$ are the $G$-symmetric spherical harmonics, normalized to $1$ in the $L^{2}$-norm, with the increasing sequence of eigenvalues $\mu_{i_{l}}$ of multiplicity $m_{l}.$ Then the quadratic form $\psi\mapsto Q_{k, \lambda}(\psi)$ defined in $E_{k, \lambda}$ can be given by
\begin{equation}\label{eq501}
Q_{k, \lambda}(\psi)=Q_{k,\lambda}^{0}(\psi_{0})+\sum_{l=1}^{+\infty}\sum_{j=1}^{m_{l}}Q_{k,\lambda}^{l}(\psi_{l,j}),
\end{equation}
where the functional $Q_{k,\lambda}^{l}$ is defined as
\begin{align*}
Q_{k,\lambda}^{l}(\phi)&=\int_{1}^{\frac{\pi}{k}}(\lambda\phi'^{2}+\phi^{2}-pu_{k,\lambda}^{p-1}\phi^{2})S_{k}^{d-1}dr\\
&\quad-\lambda\frac{(d-1)k}{\tan(k)}\phi(1)^{2}
+\lambda\mu_{i_{l}}\int_{1}^{\frac{\pi}{k}}\phi^{2}S_{k}^{d-3}dr
\end{align*}
for a function $\phi:\left(1,\frac{\pi}{k}\right)\rightarrow\mathbb{R}.$ By convention, we choose $\mu_{i_{0}}=0.$ Also $\psi_{0}(1)=0$ and $\psi_{0}$ is orthogonal to the function $z_{k,\lambda}$ restricted to the radial variable since $\psi\in E_{k,\lambda}.$ We know that $Q_{k,\lambda}^{0}(\psi_{0})>0$ in the radial case. For $\lambda=\lambda_{\ast}(k),$ $Q_{k, \lambda}(\psi)\geq0,$ and then $Q_{k,\lambda}^{l}\geq0$ by (\ref{eq501}). Moreover, it is obvious that
\[Q_{k,\lambda}^{l_1}<Q_{k,\lambda}^{l_2} \,~\mbox {if} \,\,1\leq l_{1}<l_{2}.\]
We also know that there exists a $\psi\in E_{k,\lambda}$ such that $Q_{k, \lambda}(\psi)=0.$ Therefore $Q_{k,\lambda}^{1}\geq0$ and $Q_{k,\lambda}^{l}>0$ for $l>1.$ This implies that the dimension of the kernel of the operator $H_{\lambda_{\ast}(k)}$ is $m_{1},$ which is odd by the assumption (G) on the symmetry group.
 \end{proof}

\section{Appendix}

In this appendix we prove the quantitative version of the Implicit Function Theorem given in Proposition \ref{Pr22}, the uniformity of the Sobolev constant given in Lemma \ref{Le202}, and also the proof of Lemma \ref{Le32}.

\begin{proof}[Proof of Proposition \ref{Pr22}]
	Let us define the map $T:U\rightarrow Y$ by setting
	\[T(w)=w-[F'(v)]^{-1}(F(v+w)).\]
	Clearly, a fixed point $z$ of $T$ will give rise to a solution to the equation $F(v+z)=0.$ We apply now the Banach contraction Theorem to the operator $T.$
	
	For any $\psi\in Y,w\in U,$ one has
	\[T'(w)[\psi]=\psi-[F'(v)]^{-1}(F'(v+w)[\psi])=[F'(v)]^{-1}(F'(v)[\psi]-F'(v+w)[\psi]).\]
	Thus we find
	\begin{equation}\label{eq15}
		\|T'(w)[\psi]\|\leq\frac{c}{2c }\|\psi\| = \frac{1}{2}\|\psi\|.
	\end{equation}
	Therefore we conclude that $T$ is a contraction. We finish the proof if we show that $T$ maps $U$ into itself. With this purpose, let us compute:
	\[\|T(0)\|=\|[F'(v)]^{-1}(F(v))\|\leq c\delta.\]
	On the other hand, for any $w\in U$ we can use (\ref{eq15}) to deduce
	\[\|T(w)-T(0)\|\leq\frac{c}{2c}\|w\|\leq c\delta.\]
	By using the triangular inequality of the norm, we get
	\[\|T(w)\|\leq2c\delta.\]
	By the Banach contraction Theorem, $T$ has a fixed point in $U$. Moreover, by (A2) and (A3) we conclude that $\| F'(v+z)^{-1} \| \leq 2c$.
\end{proof}

\bigskip

\begin{proof}[Proof of Lemma \ref{Le202}]
	We first give the facts that
	\[\|u\|_{L^{s}}\leq C(q)\left(\|\nabla u\|_{L^{q}}+\|u\|_{L^{q}}\right) \,\ \forall\,\ u\in H^{1,q}(\mathbb{S}_{1}^{2}),~\frac{1}{s}=\frac{2-q}{2q},~1<q<2,\]
	and
	\[\|u\|_{L^{2^{\ast}}}\leq C(d)\left(\|\nabla u\|_{L^{2}}+\|u\|_{L^{2}}\right) \,\ \forall\,\ u\in H^{1}(\mathbb{S}_{1}^{d}),d\geq3, \]
	see \cite{A98,B93}. And we expand that
	\begin{equation*}
		\bar{u}=
		\begin{cases}
			u, &x \in \mathbb{S}^{d}(k)\setminus B_{1},\\
			0, &x \in B_{1},
		\end{cases}
	\end{equation*}
	so that $\bar{u}\in H^{1}(\mathbb{S}^{d}(k))=H^{1}(\mathbb{S}_{R}^{d}),R=\frac{1}{k}.$ Taking $\hat{u}(y)=\bar{u}(Ry),y\in \mathbb{S}_{1}^{d}$, we now consider two cases:\\
	\textbf{Case 1:} $d=2.$\\
	Let $f(t)=\frac{2t}{2-t},$ with $f(1)=2,f(2_{-})=+\infty,$ then for any $s\in(2,+\infty),$ there exists a unique $t\in(1,2)$ such that $f(t)=\frac{2t}{2-t}=s.$
	Then we have
	\[\|u\|_{L^{s}}\leq C(\|u\|_{L^{t}}+\|\nabla u\|_{L^{t}})\]
	by the fact that
	\begin{align*}
		\left(\int_{\mathbb{S}_{R}^{2}}|\bar{u}|^{s}\right)^{\frac{t}{s}}&= R^{2-t}\left(\int_{\mathbb{S}_{1}^{2}}|\hat{u}|^{s}\right)^{\frac{t}{s}} \\
		&\leq R^{2-t}C\left(\int_{\mathbb{S}_{1}^{2}}|\hat{u}|^{t}+\int_{\mathbb{S}_{1}^{2}}|\nabla \hat{u}|^{t}\right)\\
		&= R^{2-t}C\left(R^{-2}\int_{\mathbb{S}_{R}^{2}}|\bar{u}|^{t}+R^{t-2}\int_{\mathbb{S}_{R}^{2}}|\nabla \bar{u}|^{t}\right)\\
		&\leq C\left(\int_{\mathbb{S}_{R}^{2}}|\bar{u}|^{t}+\int_{\mathbb{S}_{R}^{2}}|\nabla \bar{u}|^{t}\right),
	\end{align*}
	where $C=C(s)$ is a positive constant. We take $u^{2}$ instead of $u,$ then we can get that
	\begin{align*}
		\|u^{2}\|_{L^{s}}&\leq C(\|u^{2}\|_{L^{t}}+\|\nabla u^{2}\|_{L^{t}})\\
		&\leq C(\|u\|_{L^{2}}+2\|\nabla u\|_{L^{2}})\|u\|_{L^{s}}\\
		&\leq C(\|u\|_{L^{2}}+2\|\nabla u\|_{L^{2}})\|u\|^{1-\beta}_{L^{2}}\|u\|^{\beta}_{L^{2s}},
	\end{align*}
	where $\beta=\frac{2}{s}.$ Then one can get that
	\[\|u\|^{2-\beta}_{L^{2s}}\leq C(\|u\|_{L^{2}}+2\|\nabla u\|_{L^{2}})\|u\|^{1-\beta}_{L^{2}}.\]
	Therefore, for any $s>4,$ we have \[\|u\|_{L^{s}}\leq C\|u\|_{H^{1}}.\]
	As $2<s\leq4,$ we obtain
	\[\|u\|_{L^{s}}\leq\|u\|_{L^{2}}^{\gamma}\|u\|_{L^{2s}}^{1-\gamma},\]
	where $\gamma=\frac{2}{s}.$ Also, we can gain
	\[\|u\|_{L^{s}}\leq C\|u\|_{H^{1}}.\]
	\textbf{Case 2:} $d\geq3.$\\
	By the interpolation inequality, one has that
	\[\|u\|_{L^{s}}\leq\|u\|_{L^{2}}^{\alpha}\|u\|_{L^{2^{\ast}}}^{1-\alpha},\]
	where $\alpha$ satisfies $\frac{\alpha}{2}+(1-\alpha)(\frac{1}{2}-\frac{1}{d})=\frac{1}{s}.$ We also have
	\begin{align*}
		\left(\int_{\mathbb{S}_{R}^{d}}|\bar{u}|^{2^{\ast}}\right)^{\frac{2}{2^{\ast}}}&= R^{d-2}\left(\int_{\mathbb{S}_{1}^{d}}|\hat{u}|^{2^{\ast}}\right)^{\frac{2}{2^{\ast}}} \\
		&\leq R^{d-2}C\left(\int_{\mathbb{S}_{1}^{d}}|\hat{u}|^{2}+\int_{\mathbb{S}_{1}^{d}}|\nabla \hat{u}|^{2}\right)\\
		&= R^{d-2}C\left(R^{-d}\int_{\mathbb{S}_{R}^{d}}|\bar{u}|^{2}+R^{2-d}\int_{\mathbb{S}_{R}^{d}}|\nabla \bar{u}|^{2}\right)\\
		&\leq C\left(\int_{\mathbb{S}_{R}^{d}}|\bar{u}|^{2}+\int_{\mathbb{S}_{R}^{d}}|\nabla \bar{u}|^{2}\right),
	\end{align*}
	where the positive constant $C$ just depends on $d$. Then one can obtain that $\|u\|_{L^{s}}\leq C\|u\|_{H_{0}^{1}}.$ The proof of the lemma is done.
\end{proof}
\bigskip

\begin{proof}[Proof of Lemma \ref{Le32}]

Take a smooth vector field $M(x)$ in $\mathbb{S}^{d}(k)\setminus B_{1}$ such that $M(x)=\nu(x)$ on the boundary $\partial B_{1},|M(x)|\leq1.$ Then, we apply the divergence theorem and H\"{o}lder inequality to get
 \begin{align*}
 & \int_{\partial B_{1}}\psi^2= \int_{\partial B_{1}}\psi^2\cdot M(x)\cdot\nu(x)\\
 &=\int_{\mathbb{S}^{d}(k)\setminus B_{1}} \left(2\psi\nabla \psi\cdot M(x)+\psi^2\cdot div M(x)\right)\\
 &\leq2\|\nabla \psi\|_{L^{2}}\|\psi\|_{L^{2}}+\|\psi\|^{2}_{L^{2}}\| div M(x)\|_{L^{\infty}}\\
 &\leq 2\|\nabla \psi\|_{L^{2}}\|\psi\|_{L^{2}}+C\|\psi\|^{2}_{L^{2}}
\end{align*}
since $| div M(x)|$ is uniformly bounded. In order to prove that, let us now consider the vector field $M(x)$ in the coordinates $(r,\theta),r\in(1,\frac{\pi}{k}),\theta\in\mathbb{S}^{d-1},$ and give by
$$M(r,\theta)=-\frac{\partial}{\partial r}X(r,\theta)\cdot\chi(r)=-\textbf{v}_{r}\cdot\chi(r)=-\left(M_{r}\textbf{v}_{r}+\sum\limits_{i=1}^{d-1}M_{\theta_i}\textbf{v}_{\theta_i}\right),$$
where $\textbf{v}_{r}:=\textbf{v}_{r}(\theta),\textbf{v}_{\theta_i}$ are orthonormal vectors along $X$ and $\chi(r)$ is a cut-off function
\begin{equation*}
	 \chi(r)=
	\begin{cases}
	0, &r \geq\frac{3}{2},\\
	1, &r \leq\frac{5}{4}.
	\end{cases}
	\end{equation*}	
Then the divergence can be written as
 \begin{align*}
 div M(x) &=\frac{1}{S_{k}^{d-1}(r)}\frac{\partial}{\partial_{r}}(S_{k}^{d-1}(r)M_{r}(r,\theta))+\frac{1}{S_{k}(r)}div_{\theta} M_{\theta}(r,\theta)\\
 &=\partial_{r}M_{r}(r,\theta)+(d-1)\frac{C_{k}(r)}{S_{k}(r)}M_{r}(r,\theta)+\frac{1}{S_{k}(r)}div_{\theta} M_{\theta}(r,\theta)\\
 &=-\chi'(r)-(d-1)\frac{C_{k}(r)}{S_{k}(r)}\chi(r).
\end{align*}
Therefore,
 $$|div M(x)|\leq C(d)$$
 as $k\rightarrow0,$ and the proof is finished.
\end{proof}


\end{document}